\newtheorem{example}{{\sc Example}}[section]
\newtheorem{theorem}{Theorem}[section]
\newtheorem{proposition}{Proposition}[section]
\newtheorem{assumption}{Assumption}[section]
\newtheorem{corollary}{Corollary}[section]
\newtheorem{remark}{\sc Remark}[section]
\def\pr{\textsf{P}} 
\def\ep{\textsf{E}} 
\def\Cov{\textsf{Cov}} 
\def\Var{\textsf{Var}} 
\begin{document}

\begin{frontmatter}

\title{ Central Limit Theorems of a Recursive Stochastic Algorithm with Applications to Adaptive Designs}
\runtitle{CLT of the Stochastic Algorithm }

\begin{aug}

\author{\fnms{Li-Xin} \snm{ZHANG}\thanksref{t1}\ead[label=e1]{stazlx@zju.edu.cn}}

\address{Li-Xin ZHANG\\
SCHOOL OF MATHEMATICAL SCIENCES$~~~~~~~~~~~~~~~$ \\
ZHEJIANG UNIVERSITY\\
ZHEDA ROAD, NO. 38~~~~~~ \\
HANG ZHOU, 310027~, P.R. China \\
\printead{e1}}


\thankstext{t1}{Research supported by grants from the NSF of China
(No. 11225104), the 973 Program (No. 2015CB352302) and the Fundamental
Research Funds for the Central Universities. }

\runauthor{ L-X. Zhang }
\end{aug}

\begin{abstract}
Stochastic approximation algorithms have been the subject of an enormous body of literature, both theoretical and applied. Recently, Laruelle and Pag\`es (2013) presented a link between the stochastic approximation and   response-adaptive designs in clinical trials based on randomized urn models investigated  in Bai and Hu (1999, 2005), and derived  the asymptotic normality or central limit theorem for the normalized procedure using a central limit theorem for the stochastic approximation algorithm. However, the classical central limit theorem for the stochastic approximation algorithm does not include all cases of its regression function, creating a gap between the results of Laruelle and Pag\`es (2013) and those of Bai and Hu (2005) for randomized urn models. In this paper, we establish  new central limit theorems of the stochastic approximation algorithm under the popular Lindeberg condition to fill this gap. Moreover, we prove that the process of the algorithms can be approximated by a Gaussian process that is a solution of a stochastic differential equation. In our application, we  investigate a more involved family of urn models and related adaptive designs in which it is possible to remove the balls from the urn, and the expectation of the total number of  balls updated at each stage is not necessary a constant. The asymptotic properties are derived under much less stringent assumptions than those in Bai and Hu (1999, 2005) and Laruelle and Pag\`es (2013).

\end{abstract}

\begin{keyword}[class=AMS]
\kwd[Primary ]{60F05} \kwd{62L20} \kwd[; secondary ]{60F15} \kwd{60F17}
\end{keyword}

\begin{keyword}
\kwd{Stochastic approximation algorithms} \kwd{central limit theorem} \kwd{urn model} \kwd{adaptive design}
\kwd{Gaussian approximation} \kwd{the ODE method}
\end{keyword}

\end{frontmatter}



\section{ Introduction}
\setcounter{equation}{0}
Stochastic approximation (SA) algorithms, which have progressively gained sway thanks to the development of computer science and automatic control theory, have been the subject of many studies. An SA algorithm is also used in clinical trials to solve the dose-finding problem (see e.g., Cheung (2010) and the citations therein). The basic frameworks of SA algorithms and their theoretical results can be found in classical textbooks such as those by Benveniste et al. (1990), Duflo (1996, 1997), Kushner and Clark (1978) and Kushner and Yin (2003). In this paper, we consider the following recursive SA algorithm defined on a filtered probability space $\big(\Omega,\mathscr{F}, (\mathscr{F}_n)_{n\ge 0}, \pr)$
\begin{equation}\label{eqModel} \bm\theta_{n+1}=\bm\theta_n-\frac{\bm h(\bm\theta_n)}{n+1}+\frac{\Delta \bm M_{n+1}+\bm r_{n+1}}{n+1},
\end{equation}
where $\bm\theta_n$ is a row vector in $\mathbb R^d$, the regression function $\bm h: \mathbb R^d \to \mathbb R^d$ is a real vector-valued function, $\bm \theta_{0}$ is a finite random vector, $\bm M_0=\bm 0$, $\{\Delta \bm M_n,\mathscr{F}_n;n\ge 1\}$ is a sequence of  martingale differences and $\bm r_n$ is a remainder term.

Very recently, Laruelle and Pag\`es (2013) presented a link between this SA algorithm and the response-adaptive randomization process in clinical trials based on the randomized Generalized Friedman Urn (GFU, also known as a generalized P\'olya urn (GPU) in the literature) models investigated in Bai and Hu (1999, 2005). They derived the almost sure (a.s.) convergence and the joint asymptotic normality or Central Limit Theorem (CLT) of the normalized procedure for both the urn compositions and the assignments by applying SA theory. Higueras et al. (2003, 2006) also showed that the urn compositions can be written as an SA algorithm under some extra assumptions, including that the total number of balls added to the urn at each stage is the same. However, they did not consider the procedure of assignments.

The main tool used by Laruelle and Pag\`es (2013) to derive the asymptotic normality of GPU models is the CLT for an SA algorithm. Various types of results on the CLT of $\bm \theta_n$ have been established in the literature under certain conditions, especially when $\bm r_n\equiv \bm 0$, and they can thus be found in classical textbooks such as that by Kushner and Yin (2003, p. 330). For results in a more general framework, one can refer to Pelletier (1998). Let $\bm \theta^{\ast}$ be an equilibrium point of $\{\bm h=\bm 0\}$. Assume that the function $\bm h$ is differentiable at $\bm \theta^{\ast}$ and that all of the eigenvalues of $D\bm h(\bm \theta^{\ast})=:\big(\partial h_i(\bm\theta^{\ast})/\partial \theta_j;i,j=1,\cdots,d\big)$ have positive real parts. Denote  $\rho=Re(\lambda_{\min})$, where $\lambda_{\min}$  is the eigenvalue of $D\bm h(\bm\theta^{\ast})$ with the lowest real part. In considering the CLT, $\rho>1/2$ is usually assumed as a basic condition. The following CLT can be found in Duflo (1997), Benveniste et al. (1990) and Kushner and Yin (2003) (cf. Theorem A.2  of  Laruelle and Pag\`es (2013)) with different groups of conditions.
\begin{theorem}\label{thLaruellePages} Let $\bm \theta^{\ast}$ be an equilibrium point of $\{\bm h=\bm 0\}$. Suppose that $\bm \theta_n\to \bm\theta^{\ast}$ a.s. and assume that for some $\delta>0$,
\begin{equation}\label{eqLaruellePages1.1} \begin{matrix}
\sup\limits_{n\ge 0} \ep\left[\|\bm\Delta \bm M_{n+1}\|^{2+\delta}\big|\mathscr{F}_n\right]<+\infty\; a.s.\\
\ep\left[(\bm\Delta \bm M_{n+1})^{\rm t}\bm\Delta \bm M_{n+1}\big|\mathscr{F}_n\right]\to \bm\Gamma\; a.s.,
\end{matrix}
\end{equation}
where $\bm \Gamma$ is a deterministic symmetric positive semidefinite matrix and for an $\epsilon>0$,
\begin{equation}\label{eqLaruellePages1.2}
(n+1)\ep\left[\|\bm r_{n+1}\|^2 \mathbb I_{\{\|\bm\theta_n-\bm\theta^{\ast}\|\le \epsilon\}}\right]\to 0.
\end{equation}
Suppose  $\rho:=Re(\lambda_{\min})>1/2$. Then,
     \begin{equation}\label{eqLaruellePages1.3} \sqrt{n}(\bm\theta_n-\bm\theta^{\ast})\overset{\mathscr{D}}\to N\left(\bm 0,\bm\Sigma\right),
     \end{equation}
      where
      \begin{equation}\label{eqLaruellePages1.4} \bm \Sigma:=\int_0^{\infty}\big(e^{-(D\bm h(\bm\theta^{\ast})-\bm I_d/2)u}\big)^{\rm t}\bm\Gamma  e^{-(D\bm h(\bm\theta^{\ast})-\bm I_d/2)u}  du,
      \end{equation}
and $\bm I_d$ is a $d\times d$-identity matrix.
 \end{theorem}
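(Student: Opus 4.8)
The plan is to follow the classical linearization-plus-martingale-CLT route. Set $\bm Z_n:=\bm\theta_n-\bm\theta^{\ast}$ and, using differentiability of $\bm h$ at $\bm\theta^{\ast}$, write $\bm h(\bm\theta^{\ast}+\bm z)=\bm z\,\bm A^{\rm t}+\bm\varphi(\bm z)$ with $\bm A:=D\bm h(\bm\theta^{\ast})$ and $\|\bm\varphi(\bm z)\|=o(\|\bm z\|)$ as $\bm z\to\bm 0$; since $\bm\theta_n\to\bm\theta^{\ast}$ a.s., this expansion is eventually legitimate along the trajectory. The recursion \eqref{eqModel} then reads
\begin{equation*}
\bm Z_{n+1}=\bm Z_n\Big(\bm I_d-\frac{\bm A^{\rm t}}{n+1}\Big)+\frac{\Delta\bm M_{n+1}}{n+1}+\frac{\bm r_{n+1}-\bm\varphi(\bm Z_n)}{n+1}.
\end{equation*}
Rescaling, with $\bm U_n:=\sqrt n\,\bm Z_n$ and $\sqrt{(n+1)/n}=1+\tfrac1{2n}+O(n^{-2})$, turns this into a Robbins--Monro-type recursion with step size $1/n$ whose effective drift is governed by $\bm B:=\bm A-\bm I_d/2$; the hypothesis $\rho>1/2$ means every eigenvalue of $\bm B$ has real part $\ge\rho-1/2>0$. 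Heuristically $\bm U_n$ then imitates, in the time scale $t=\log n$, the stationary regime of an Ornstein--Uhlenbeck process with drift $-\bm B$ and diffusion coefficient $\bm\Gamma$, whose invariant law is $N(\bm 0,\bm\Sigma)$ with $\bm\Sigma$ as in \eqref{eqLaruellePages1.4}; making this precise is the substance of the proof.

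The next step is to solve the linear part explicitly. With $\bm R_{k,n}:=\prod_{j=k+1}^{n}(\bm I_d-\bm A^{\rm t}/j)$ (these factors commute and act on the right), iteration gives
\begin{equation*}
\bm Z_n=\bm Z_0\,\bm R_{0,n}+\sum_{k=1}^{n}\frac1k\,\Delta\bm M_k\,\bm R_{k,n}+\sum_{k=1}^{n}\frac1k\big(\bm r_k-\bm\varphi(\bm Z_{k-1})\big)\bm R_{k,n},
\end{equation*}
and I would invoke the standard spectral estimates $\|\bm R_{k,n}\|\le C_\eta\,(k/n)^{\rho-\eta}$ (for any small $\eta>0$) and $\sqrt{n/k}\,\bm R_{k,n}\to e^{-\bm B^{\rm t}\log(n/k)}$ as $n,k\to\infty$ with $\log(n/k)$ fixed. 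Put $w_{k,n}:=k^{-1}\sqrt n\,\bm R_{k,n}$; the dominant term of $\sqrt n\,\bm Z_n$ is then the martingale triangular array $\sum_{k\le n}\Delta\bm M_k\,w_{k,n}$. Because $\rho>1/2$, the weights satisfy $\sum_{k\le n}\|w_{k,n}\|^2=O(1)$, $\max_{k\le n}\|w_{k,n}\|\to0$, and $\sum_{k\le m}\|w_{k,n}\|^2\to0$ for each fixed $m$; a Toeplitz-type argument then converts the second line of \eqref{eqLaruellePages1.1} into $\sum_{k\le n}w_{k,n}^{\rm t}\,\ep\!\left[(\Delta\bm M_k)^{\rm t}\Delta\bm M_k\mid\mathscr F_{k-1}\right]w_{k,n}\to\bm\Sigma$, the Riemann-sum limit of the integral in \eqref{eqLaruellePages1.4}, while the conditional $(2+\delta)$-moment bound in \eqref{eqLaruellePages1.1} supplies the conditional Lindeberg condition for the array. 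A martingale central limit theorem then yields $\sum_{k\le n}\Delta\bm M_k\,w_{k,n}\overset{\mathscr D}\to N(\bm 0,\bm\Sigma)$; along the way one records that $\bm\Sigma$ solves a Lyapunov equation built from $\bm B$ and $\bm\Gamma$ and that the integral in \eqref{eqLaruellePages1.4} is finite exactly because $\rho>1/2$.

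It remains to show that the three remaining contributions are negligible after multiplying by $\sqrt n$. The transient $\sqrt n\,\bm Z_0\,\bm R_{0,n}\to\bm 0$ a.s., since $\|\bm R_{0,n}\|=O(n^{-\rho+\eta})$ with $\rho>1/2$. For the remainder and linearization pieces I would first establish an a priori $L^2$ rate: on a localization event $A_n\subseteq\{\|\bm\theta_k-\bm\theta^{\ast}\|\le\epsilon,\ k\le n\}$, so that \eqref{eqLaruellePages1.2} applies, one squares the $\bm U_n$-recursion and, using the contraction supplied by $\bm B$ together with a discrete Gronwall inequality, obtains $\ep\!\left[\|\bm Z_n\|^2\mathbb I_{A_n}\right]=O(n^{-1})$. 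Consequently $\sqrt n\sum_k k^{-1}\bm r_k\,\bm R_{k,n}\to\bm 0$ in $L^2$ by \eqref{eqLaruellePages1.2}, while $\sqrt n\sum_k k^{-1}\bm\varphi(\bm Z_{k-1})\bm R_{k,n}\to\bm 0$ in probability because $\|\bm\varphi(\bm Z_{k-1})\|=o(\|\bm Z_{k-1}\|)$ and $\|\bm Z_{k-1}\|$ is of order $k^{-1/2}$ in probability on $A_n$. Since $\bm\theta_n\to\bm\theta^{\ast}$ a.s. forces $\pr(A_n^c)\to 0$, the conclusion on $A_n$ transfers to the whole space, giving \eqref{eqLaruellePages1.3}. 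I expect the main obstacle to be precisely this last stage: upgrading the bare a.s.\ convergence hypothesis to the rate $\|\bm Z_n\|=O_{\pr}(n^{-1/2})$ needed to absorb the $\sqrt n$-magnified linearization error, while simultaneously handling $\bm r_n$, whose control \eqref{eqLaruellePages1.2} is available only on the event $\{\|\bm\theta_n-\bm\theta^{\ast}\|\le\epsilon\}$; the stopping-time and truncation bookkeeping that renders the ``bad'' contributions negligible in spite of the $\sqrt n$ factor is the genuinely delicate ingredient.
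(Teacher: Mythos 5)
The paper never proves Theorem \ref{thLaruellePages} itself --- it is quoted from Duflo (1997), Benveniste et al.\ (1990) and Kushner--Yin (2003) --- so the natural comparison is with the paper's own proof of the analogous $\rho>1/2$ result, Theorem \ref{theorem1}, together with Propositions \ref{lemma1} and \ref{prop2.1}. Your outline is correct and reaches the same martingale-CLT core (explicit solution of the linearized recursion, spectral bounds $\|\bm R_{k,n}\|\le C_\eta (k/n)^{\rho-\eta}$, Lindeberg from the conditional $(2+\delta)$-moments, Riemann-sum identification of $\bm\Sigma$), but you diverge on the one genuinely delicate point, the linearization error $\bm\varphi(\bm Z_n)$. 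You keep $\bm\varphi$ as an explicit remainder and kill the $\sqrt n$-magnified error via an a priori rate $\ep[\|\bm Z_n\|^2\mathbb I_{A_n}]=O(n^{-1})$ obtained by squaring the recursion and a discrete Gronwall argument on a localization event; this is the classical Duflo/Benveniste route, and it is what the hypotheses of Theorem \ref{thLaruellePages} (mere differentiability of $\bm h$ plus the $L^2$ condition (\ref{eqLaruellePages1.2}) on $\bm r_n$) are designed for. The paper instead folds the linearization error into the \emph{random} matrices $\bm H_{n+1}=\bm H(\bm\theta_n)$, so that the recursion becomes exactly linear with random products $\bm\Pi_m^n$, and then shows $\bm\Pi_m^n-\widetilde{\bm\Pi}_m^n=o(1)(n/m)^{-\rho+\delta}$ (Proposition \ref{lemma1}(ii)) and that the substitution of deterministic weights is negligible ((\ref{eqproofth1.3})). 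The paper's device buys something real: when $\rho>1/2$ it needs no rate whatsoever on $\bm\theta_n-\bm\theta^{\ast}$, only $\bm H(\bm\theta_n)\to\bm H$, i.e.\ only the assumed a.s.\ convergence; your route must manufacture the $O_{\pr}(n^{-1/2})$ rate first. Conversely, your route handles the condition (\ref{eqLaruellePages1.2}) on $\bm r_n$ directly by Cauchy--Schwarz, where the paper works with the summed condition $\sum_k\bm r_k=o(\sqrt n)$ (which (\ref{eqLaruellePages1.2}) implies on the localization event, so the two are compatible). The one place your sketch still needs completing is the Gronwall step itself: (\ref{eqLaruellePages1.1}) gives only an a.s.\ finite \emph{random} bound on the conditional moments, so the $L^2$ estimate requires intersecting $A_n$ with $\{\sup_{k\le n}\ep[\|\Delta\bm M_{k+1}\|^{2+\delta}\mid\mathscr F_k]\le K\}$ and letting $K\to\infty$ at the end; you flag this as ``truncation bookkeeping'' but do not carry it out. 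With that completed, your argument is a valid and essentially self-contained proof, just by a different mechanism than the one the paper uses for its own $\rho>1/2$ theorem.
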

  In the cases of $\rho=1/2$ and $0<\rho<1/2$, partial results have been established when $D\bm h(\bm\theta^{\ast})$ is diagonal. For example,   Duflo (1997, cf. Theorem 2.2.12) showed that if $D\bm h(\bm\theta^{\ast})=\rho\bm I_d$, the CLT holds with rate $\sqrt{\frac{n}{\log n}}$ when $\rho=1/2$, and $n^{\rho}(\bm\theta_n-\bm\theta^{\ast})$ almost surely converges to a random vector when $0<\rho<1/2$.
   Laruelle and Pag\`es (2013) summarized this kind of results to their Theorem A.2 and applied them to GPU, but they missed the condition that $D\bm h(\bm\theta^{\ast})$ is diagonal, thus the results in their Theorems 2.2 (b) and (c)  are not consistent with those in Theorems 2.2 and 3.2 of Bai and Hu (2005). The main purpose of this paper is to establish the CLT for a general matrix $D\bm h(\bm\theta^{\ast})$. We find that in the cases of $\rho=1/2$ and $0<\rho<1/2$, the results for a general matrix $D\bm h(\bm\theta^{\ast})$ are much more complex than those for a diagonal matrix.

In the next section, we establish general  asymptotic results on the SA algorithm (\ref{eqModel}) under the popular Lindeberg condition, which is less restrictive than    (\ref{eqLaruellePages1.1}). From these results, we find that the limiting behavior of the SA algorithm depends on not only the value  of the eigenvalue $\lambda_{\min}$  but also  the  multiplicity of this eigenvalue. Moreover, $n^{\rho}(\bm\theta_n-\bm\theta^{\ast})$  does not converge in general when $\rho<1/2$. Further, in Section \ref{GaussApp}, we prove that the process of the algorithms can be  approximated almost surely by a Gaussian process  when $\rho\le 1/2$  under a condition a little more stringent than the  Lindeberg condition, and the Gaussian process  is a solution of a stochastic differential equation.

As an application of SA theory, in Section  \ref{sectionUrn}, we derive the asymptotic properties of an important class of response-adaptive designs in clinical trials  based on the randomized GFU. Laruelle and Pag\`es (2013) provided a clever way to study the asymptotic normality of randomized  urn models. Motivated by their idea, as an application of the new SA theory, in Section \ref{sectionUrn}, we retrieve the a.s. convergence and the asymptotic normality of the randomized GFU models under assumptions much less stringent than those in Bai and Hu (1999, 2005). We investigate a more involved family of urn models in which it is possible for the balls of each type to be removed from the urn, and the expectation of the total number of balls updated at each stage is not necessarily a constant. The asymptotic property of such urns is stated as an open problem in Hu and Rosenberger (2006, p. 158), and examples of models featuring the removal of balls can be found in Hu and Rosenberger (2006), Janson (2004), Zhang et al. (2011), etc. For this general framework, the first problem is to show the a.s. convergence. The methods of Bai and Hu (1999, 2005) and Higueras et al. (2003, 2006) do  not work  because they depend heavily on the assumption that the total number of balls or the expectation of the total number of  balls updated at each stage is  a constant. We show that the ordinary differential equation (ODE) method proposed by  Laruelle and Pag\`es (2013) is valid to prove the a.s. convergence, although in  their original proof, such an assumption is also needed. However, the ODE is no longer a linear equation, as it was in Laruelle and Pag\`es (2013). The convergence rate of the urn model depends on the second-largest eigenvalue $\lambda_{sec}$ and the largest eigenvalue $\lambda_{max}$ of the urn's limiting generating matrix. When the ratio $\lambda_{sec}/\lambda_{max}$ of these two eigenvalues is large ($>1/2$), the asymptotic property is also an unsolved problem (cf.  Hu and Rosenberger, 2006, p. 158). In section \ref{sectionUrn}, a clear answer to this open problem is provided.

Finally, some basic results on the convergence of the recursive algorithm and multi-dimensional martingales are given in the Appendix.

 In the sequel to this paper, the Euclidean norm of a vector $\bm x=(x_1,\cdots, x_d)$ is defined to be $\|\bm x\|=\sqrt{\sum_j x_j^2}$, and the norm of a matrix $\bm M$ is defined to be $\|\bm M\|=\sup\{\|\bm x\bm M\|:\|\bm x\|=1\}$. $\bm 1=(1,\cdots,1)$ denotes the unit
row vector in $R^d$. $\bm x^{\rm t}$ denotes the transpose of $\bm x$. For a function $\bm f(t): \mathbb R^d  \to \mathbb R$, $\dot{\bm f}(t)$ denotes its derivative, and for a function $\bm f(\bm x): \mathbb R^d \to \mathbb R^d$, $D\bm f(\bm x)$ denotes the matrix of its partial derivatives with the $(i,j)$-th element being $\partial f_i(\bm x)/\partial x_j$. Further, for two positive sequences $\{a_n\}$ and $\{b_n\}$ and a sequence of vectors $\{\bm v_n\}$, we write
$a_n=O(b_n)$ if there is a constant $C$ such that $a_n\le C b_n$,
$a_n\sim b_n$ if $a_n/b_n\to 1$,   $a_n\approx b_n$ if
$a_n=O(b_n)$ and $b_n=O(a_n)$, $\bm v_n=O(b_n)$ if there is a constant $C$ such that $\|\bm v_n\|\le C b_n$, and $\bm v_n=o(b_n)$ if $\|\bm v_n\|/b_n\to 0$.

\section{Central Limit Theorems}
\setcounter{equation}{0}
In this section, we consider the central limit theorem of the SA logarithm (\ref{eqModel}). We first need some assumptions. The first two are on the differentiability of the function $\bm h(\cdot)$.

\begin{assumption} \label{assump0} Let $\bm \theta^{\ast}$ be an equilibrium point of $\{\bm h=\bm 0\}$. Assume that function $\bm h$ is differentiable at $\bm \theta^{\ast}$ and that all of the eigenvalues of $D\bm h(\bm \theta^{\ast})$ have positive real parts.
\end{assumption}
Under Assumption \ref{assump0}, we have that  $\bm h(\bm\theta^{\ast})=\bm 0$,
\begin{equation}\label{eqAssump0.1} \bm h(\bm \theta)=\bm h(\bm \theta^{\ast})+(\bm\theta-\bm\theta^{\ast})D\bm h(\bm \theta^{\ast})
+o\big(\|\bm\theta-\bm\theta^{\ast}\|\big) \;\; \text{ as }\;\; \bm\theta\to \bm\theta^{\ast},
\end{equation}
and   $D\bm h(\bm \theta^{\ast})$ has the following Jordan canonical form
$$ \bm T^{-1} D\bm h(\bm \theta^{\ast})\bm T= diag(\bm J_1, \bm J_2,\cdots,\bm J_s), $$
where
$$\bm J_t=\begin{pmatrix}
\lambda_t &1           &  0     &\ldots     & 0 \\
         0         & \lambda_t  &1       &\ldots     & 0 \\
         \vdots    & \ldots     & \ddots &\ddots     & \vdots \\
         0         & 0          &\ldots  & \lambda_t & 1 \\
         0         & 0          &0       & \ldots    &\lambda_t
\end{pmatrix}_{\nu_t\times\nu_t}=\lambda_t \bm I_{\nu_t}+\overline{\bm J}_{\nu_t}, $$
where $\bm I_{\nu_t}$ is a $\nu_t\times \nu_t$-identity matrix and  $Sp(D\bm h(\bm \theta^{\ast}))=\{\lambda_1,\cdots,\lambda_s\}$  is the set of eigenvalues of $D\bm h(\bm \theta^{\ast})$.
Let $\rho =\min\{Re(\lambda), \lambda\in Sp(D\bm h(\bm \theta^{\ast}))\}$ and $\nu=\max\{\nu_t: Re(\lambda_t)=\rho\}$.

When we consider the case of $\rho\le 1/2$, we need a condition a little more stringent than (\ref{eqAssump0.1}).
\begin{assumption} \label{assump1}
Suppose that Assumption \ref{assump0} is satisfied, $\bm h(\bm\theta^{\ast})=\bm 0$ and
\begin{equation}\label{eqAssump1.1} \bm h(\bm \theta)=\bm h(\bm \theta^{\ast})+(\bm\theta-\bm\theta^{\ast})D\bm h(\bm \theta^{\ast})
+o\big(\|\bm\theta-\bm\theta^{\ast}\|^{1+\epsilon}\big) \;\; \text{ as }\;\; \bm\theta\to \bm\theta^{\ast}
\end{equation}
 for some $\epsilon>0$.
\end{assumption}

We show the CLT under the following conditional Lindeberg's condition, which is popular in the study of the CLT for martingales.

\begin{assumption}\label{assump2}
Suppose that
the following Lindeberg's condition is satisfied:
\begin{equation} \label{eqAssump2.2} \frac{1}{n}\sum_{m=1}^n\ep\left[\|\Delta \bm M_m\|^2\mathbb I\{\|\Delta \bm M_m\|\ge \epsilon \sqrt{n}\}\big|\mathscr{F}_{m-1}\right]\to 0\;\; a.s.\; \text{ or in } L_1,\;\; \forall \epsilon>0.
\end{equation}
Further, assume that
\begin{equation} \label{eqAssump2.3}\frac{1}{n}\sum_{m=1}^n \ep\left[(\Delta \bm M_m)^{\rm t} \Delta \bm M_m\big|\mathscr{F}_{m-1}\right]\to \bm \Gamma
\;\; a.s. \; \text{ or in } L_1,
\end{equation}
where $\bm \Gamma$ is a  symmetric positive semidefinite random matrix.
\end{assumption}

In Assumption \ref{assump2}, $\bm \Gamma$ is a $\mathscr{F}_{\infty}(= \bigvee_n \mathscr{F}_n)$ measurable random matrix, which was assumed to be deterministic in Bai and Hu (1999, 2005), Pelletier (1998) and Laruelle and Pag\`es (2013). Although  $\bm \Gamma$ is usually deterministic in practice, we consider the general martingales, as in Hall and Heyde (1980).
Our main results are the following two theorems on the limiting properties  of the sequence $\{\bm \theta_n\}$ in the cases of $0<\rho<1/2$ and $\rho=1/2$.

\begin{theorem} \label{theorem2} Suppose that $\bm \theta_n\to \bm\theta^{\ast}$ a.s., Assumptions \ref{assump1} and \ref{assump2} are satisfied, and $\rho =1/2$. Further, for the remainder term $\bm r_n$ we assume that
 \begin{equation}\label{conditionTh2.1}
  \sum_{k=1}^n \bm r_k=o(\sqrt{n/\log n}) \; a.s.
  \end{equation}
  or
   \begin{equation}\label{conditionTh2.2}
  \sum_{m=1}^n\frac{\|\bm r_m\|}{\sqrt{m}}=o(\sqrt{\log n}) \;  a.s.\;\; \text{ or in } L_1.
\end{equation}
Then
\begin{equation}\label{eqTh2.1} \frac{\sqrt{n}}{(\log n)^{\nu-1/2}}(\bm\theta_n-\bm \theta^{\ast})  \overset{D}\to N(\bm 0,\widetilde{\bm \Sigma})\;(\text{stably}),
\end{equation}
where
\begin{equation}\label{eqTh2.2} \widetilde{\bm \Sigma}=\lim_{n\to \infty} \frac{1}{(\log n)^{2\nu-1}}\int_0^{\log n}\big(e^{-( D\bm h(\bm \theta^{\ast})-\bm I_d/2)u})^{\rm t}\bm\Gamma e^{-( D\bm h(\bm \theta^{\ast})-\bm I_d/2)u}du,
\end{equation}
and $N(\bm 0,\widetilde{\bm \Sigma})$ denotes a mixing normal distribution with the conditional  characteristic function $f(\bm t)= \exp\big\{-\frac{1}{2}\bm t \widetilde{\bm \Sigma}\bm t^t\big\}$ for given $\widetilde{\bm \Sigma}$.
 Moreover, $\widetilde{\bm\Sigma}$ satisfies
\begin{equation}\label{limitvaraince}
 ({\bm T^{\star}}^{\rm t}\widetilde{\bm \Sigma} \bm T)_{ij} =
\frac 1{((\nu-1)!)^2} \frac 1{2\nu-1}
     \bm t_{a 1}^{\star}\bm \Gamma\bm t_{b 1}^{\rm t},  \end{equation}
whenever $i=\nu_1+ \cdots+\nu_a $, $j=\nu_1+\cdots+\nu_{b}$ and $\lambda_a=\lambda_{b}$, $Re(\lambda_a)=1/2$,
$\nu_a=\nu_b=\nu$, and $ ({\bm T^{\star}}^{\rm t}\widetilde{\bm \Sigma} \bm
T)_{ij}=0$  otherwise. Here, $ \bm x^{\star} $ is the
conjugate vector of a complex   vector $\bm x$ and $\bm t_{a1}^{\rm t}$ is the first column vector of the $a$-th block in $\bm T=[ \cdots,  \bm t_{a1}^{\rm t},\cdots,\bm t_{a\nu_a}^{\rm t},\cdots]$.
 Further, let $\bm r_{a\nu_a}$ be the last row vector of the $a$-th block in $\bm T^{-1}=[\cdots,\bm r_{a1}^{\rm t},\cdots, \bm r_{a\nu_a}^{\rm t},\cdots]^{\rm t}$. Then, $\bm r_{a\nu_a}$ and $\bm t_{a1}^{\rm t}$ are respectively the left and right eigenvectors of $\bm H$ with respect to the eigenvalue $\lambda_a$, and
 $$  \widetilde{\bm \Sigma}  =
\frac 1{((\nu-1)!)^2} \frac 1{2\nu-1} \sum_{a,b: \lambda_a=\lambda_{b}, Re(\lambda_a)=1/2,
\nu_a=\nu_b=\nu}
     (\bm r_{a\nu_a}^{\rm t}\bm t_{a 1})^{\star}\bm \Gamma(\bm t_{b 1}^{\rm t} \bm r_{b\nu_b}). $$
\end{theorem}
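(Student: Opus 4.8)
The plan is to linearize the recursion (\ref{eqModel}) about $\bm\theta^{\ast}$, solve the resulting perturbed linear system explicitly, discard the non‑martingale contributions, and apply a multidimensional martingale central limit theorem to what remains. Set $\bm x_n=\bm\theta_n-\bm\theta^{\ast}$ and $\bm H=D\bm h(\bm\theta^{\ast})$. By Assumption~\ref{assump1}, $\bm h(\bm\theta_n)=\bm x_n\bm H+\bm e_{n+1}$ with $\|\bm e_{n+1}\|=o(\|\bm x_n\|^{1+\epsilon})$, so (\ref{eqModel}) becomes $\bm x_{n+1}=\bm x_n(\bm I_d-\bm H/(n+1))+(n+1)^{-1}(\Delta\bm M_{n+1}+\bm r_{n+1}-\bm e_{n+1})$. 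Since the matrices $\bm I_d-\bm H/k$ all commute, writing $\bm\Pi_m^n=\prod_{k=m+1}^n(\bm I_d-\bm H/k)$ yields, for a fixed $n_0$, $\bm x_n=\bm x_{n_0}\bm\Pi_{n_0}^n+\bm N_n+\bm R_n+\bm E_n$ with $\bm N_n=\sum_{m>n_0}m^{-1}\Delta\bm M_m\bm\Pi_m^n$ (martingale part), $\bm R_n=\sum_{m>n_0}m^{-1}\bm r_m\bm\Pi_m^n$, and $\bm E_n=-\sum_{m>n_0}m^{-1}\bm e_m\bm\Pi_m^n$. The key analytic input is the asymptotics of the products: using the Jordan form $\bm T^{-1}\bm H\bm T=\mathrm{diag}(\bm J_t)$ and the exact factorization $\bm I_{\nu_t}-\bm J_t/k=(1-\lambda_t/k)(\bm I_{\nu_t}-\overline{\bm J}_{\nu_t}/(k-\lambda_t))$, one obtains on each block $\bm T^{-1}\bm\Pi_m^n\bm T=\mathrm{diag}\big((1+o(1))(m/n)^{\lambda_t}e^{-\overline{\bm J}_{\nu_t}\log(n/m)}\big)$, together with the uniform bound $\|\bm\Pi_m^n\|=O\big((m/n)^{\rho}(1+\log(n/m))^{\nu-1}\big)$ for $n_0\le m\le n$; this is a Stirling‑type estimate I would isolate as a lemma (cf.\ the Appendix).

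Next I would show the three non‑martingale pieces are $o\big(n^{-1/2}(\log n)^{\nu-1/2}\big)$. The product bound gives $\|\bm x_{n_0}\bm\Pi_{n_0}^n\|=O(n^{-1/2}(\log n)^{\nu-1})$, which is negligible. For $\bm R_n$, condition (\ref{conditionTh2.2}) gives the bound directly from $\|\bm R_n\|\le Cn^{-1/2}(\log n)^{\nu-1}\sum_m m^{-1/2}\|\bm r_m\|$, while under (\ref{conditionTh2.1}) the same follows by Abel summation, using $\|m^{-1}\bm\Pi_m^n-(m+1)^{-1}\bm\Pi_{m+1}^n\|=O\big(m^{-2}(m/n)^{1/2}(\log n)^{\nu-1}\big)$ and $\sum_{m\le n}m^{-1}(\log m)^{-1/2}=O(\sqrt{\log n})$. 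For $\bm E_n$ I would first bootstrap a crude a.s.\ rate $\|\bm x_n\|=O\big(n^{-1/2}(\log n)^{C}\big)$ (from an $L^2$ maximal inequality applied to $\bm N_n$ plus the bound already obtained for $\bm R_n$), so that $\|\bm e_m\|=o\big(m^{-(1+\epsilon)/2}(\log m)^{C(1+\epsilon)}\big)$ and the series defining $\bm E_n$ converges, giving $\|\bm E_n\|=O(n^{-1/2}(\log n)^{\nu-1})$. Thus the CLT for $\bm x_n$ reduces to the one for $\bm N_n$.

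For $\bm N_n$, write $\tfrac{\sqrt n}{(\log n)^{\nu-1/2}}\bm N_n=\sum_{m>n_0}\bm\xi_{n,m}$ with $\bm\xi_{n,m}=\tfrac{\sqrt n}{(\log n)^{\nu-1/2}}m^{-1}\Delta\bm M_m\bm\Pi_m^n$, a martingale‑difference array in $m$. The product bound yields $\|\bm\xi_{n,m}\|\le C\|\Delta\bm M_m\|/\sqrt{m\log n}$, so the array Lindeberg condition follows from (\ref{eqAssump2.2}) after a Toeplitz/Kronecker argument, and $\sum_m\ep[\bm\xi_{n,m}^{\rm t}\bm\xi_{n,m}\,|\,\mathscr F_{m-1}]=\tfrac{n}{(\log n)^{2\nu-1}}\sum_m m^{-2}(\bm\Pi_m^n)^{\rm t}\ep[\Delta\bm M_m^{\rm t}\Delta\bm M_m\,|\,\mathscr F_{m-1}]\bm\Pi_m^n$. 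Replacing the conditional second moments by $\bm\Gamma$ via (\ref{eqAssump2.3}), substituting $u=\log(n/m)$, and inserting the block asymptotics, this sum converges a.s.\ to the integral limit (\ref{eqTh2.2}). To extract the explicit form, I would pass to $\bm y_n=\bm x_n\bm T$: the leading‑in‑$\log n$ part of the coordinate of $\bm y_n$ indexed by $\nu_1+\cdots+\nu_a$ is $\tfrac1{(\nu-1)!}\sum_m m^{-1}(-\log(n/m))^{\nu-1}(m/n)^{\lambda_a}\Delta\bm M_m\bm t_{a1}^{\rm t}$ when $\mathrm{Re}(\lambda_a)=1/2$ and $\nu_a=\nu$, and of strictly lower order otherwise; the oscillating factor $(m/n)^{\bar\lambda_a+\lambda_b}$ in the Hermitian covariance of two such coordinates forces $\lambda_a=\lambda_b$ for a non‑vanishing contribution, the exponent then being $(m/n)^{1}$, and $\tfrac1n\sum_m m^{-1}(\log(n/m))^{2\nu-2}\sim(\log n)^{2\nu-1}/(2\nu-1)$ produces the factors $((\nu-1)!)^{-2}(2\nu-1)^{-1}\bm t_{a1}^{\ast}\bm\Gamma\bm t_{b1}^{\rm t}$ of (\ref{limitvaraince}); the left/right eigenvector assertions come from $\overline{\bm J}_{\nu_a}\bm e_1=\bm 0$ and $\bm e_{\nu_a}^{\rm t}\overline{\bm J}_{\nu_a}=\bm 0^{\rm t}$. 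Since $\widetilde{\bm\Sigma}$ is $\mathscr F_{\infty}$‑measurable, the multidimensional martingale CLT of the Appendix then gives the stable convergence (\ref{eqTh2.1}).

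The hard part will be the bookkeeping in the covariance computation: proving that precisely the Jordan blocks of maximal size $\nu$ whose eigenvalue has real part $1/2$, paired with blocks of \emph{equal} (not merely conjugate) eigenvalue, survive the $(\log n)^{2\nu-1}$ normalization, while every cross‑term between blocks with distinct eigenvalues, every smaller block, and every lower power of the nilpotent part washes out. The oscillation estimates (integration/summation by parts against $(m/n)^{i\,\mathrm{Im}(\lambda_a+\bar\lambda_b)}$) and the Toeplitz‑type passage from $\ep[\Delta\bm M_m^{\rm t}\Delta\bm M_m\,|\,\mathscr F_{m-1}]$ to $\bm\Gamma$ under only (\ref{eqAssump2.3}) are the technical heart. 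A secondary difficulty is calibrating the bootstrap so that the nonlinear error $\bm E_n$ genuinely comes out at order $o\big(n^{-1/2}(\log n)^{\nu-1/2}\big)$ rather than merely $O\big(n^{-1/2}(\log n)^{\nu-1/2}\big)$.
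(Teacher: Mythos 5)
Your proposal follows essentially the same route as the paper's proof: linearize at $\bm\theta^{\ast}$, solve the perturbed linear recursion through the products $\prod_{k}(\bm I_d-\bm H/k)$, show that the initial-condition term, the remainder term and the nonlinear error are all $o\big(n^{-1/2}(\log n)^{\nu-1/2}\big)$, and apply the Hall--Heyde martingale CLT to the weighted martingale array, with the Jordan-block bookkeeping (only blocks with $Re(\lambda_a)=1/2$ and $\nu_a=\nu$, paired with blocks of equal eigenvalue, surviving the $(\log n)^{2\nu-1}$ normalization) exactly as in Proposition \ref{prop2.1}(ii). Your treatment of $\bm R_n$ under (\ref{conditionTh2.1}) versus (\ref{conditionTh2.2}), your Lindeberg bound $\|\bm\xi_{n,m}\|\le C\|\Delta\bm M_m\|/\sqrt{m\log n}$, and your covariance computation all match the paper's.

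The one genuine gap is the bootstrap for the nonlinear error. In your identity $\bm x_n=\bm x_{n_0}\bm\Pi_{n_0}^n+\bm N_n+\bm R_n+\bm E_n$, the term $\bm E_n=-\sum_{m>n_0}m^{-1}\bm e_m\bm\Pi_m^n$ is built from $\bm e_m=o(\|\bm x_{m-1}\|^{1+\epsilon})$, so a rate for $\bm x_n$ cannot be read off from bounds on $\bm N_n$ and $\bm R_n$ alone: as written, the step ``crude rate for $\bm x_n$ from an $L^2$ maximal inequality on $\bm N_n$ plus the bound on $\bm R_n$'' is circular. The paper avoids this by never introducing an additive error at the crude-rate stage: it writes $\bm h(\bm\theta)=(\bm\theta-\bm\theta^{\ast})\bm H(\bm\theta)$ \emph{exactly}, with $\bm H(\bm\theta)\to\bm H$ as $\bm\theta\to\bm\theta^{\ast}$, iterates with the random products $\bm\Pi_m^n=\prod_{j=m+1}^n(\bm I_d-\bm H(\bm\theta_{j-1})/j)$, and controls these by $\|\bm\Pi_m^n\|\le C_\delta(n/m)^{-\rho+\delta}$ a.s.\ via Proposition \ref{lemma1}(ii); Abel summation with $\bm M_n=o(n^{1/2+\delta})$ and $\bm s_n=o(n^{1/2+\delta})$ then yields $\bm\theta_n-\bm\theta^{\ast}=o(n^{-1/2+2\delta})$ a.s., after which $\bm r_{n+1}^{\ast}=o(\|\bm\theta_n-\bm\theta^{\ast}\|^{1+\epsilon})=o(n^{-1/2-\epsilon/4})$ and the switch to the constant-matrix products is harmless. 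Your version can be repaired --- e.g.\ by a Gronwall/induction argument absorbing $\bm e_m=o(\|\bm x_{m-1}\|)$ into the drift, which is effectively what the paper's $\bm H(\bm\theta)$ device does --- but this step must be made non-circular before the rest of your argument (which is otherwise sound and needs only the crude rate $o(n^{-(1+\epsilon')/2})$ for $\bm e_m$, not the sharper $O(n^{-1/2}(\log n)^{C})$ you aim for) goes through.
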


\begin{theorem}\label{theorem3} Suppose that $\bm \theta_n\to \bm\theta^{\ast}$ a.s., Assumption \ref{assump1} is satisfied with  $0<\rho<1/2$. Further, assume that
\begin{equation}\label{condition1} \sum_{m=1}^n \ep\left[(\Delta\bm M_m)^{\rm t}\Delta\bm M_m\big|\mathscr{F}_{m-1}\right]=O(n)\; a.s. \;\;\text{ or  in }\; L_1,\; \text{and}
\end{equation}
\begin{equation}\label{condition2} \sum_{k=1}^n \bm r_k=o(n^{1-\rho-\delta_0})\;\; a.s. \text{ for some } \delta_0>0.
\end{equation}
Then, there are complex random variables $\xi_1$, $\cdots$, $\xi_s$ such that
$$ \frac{n^{\rho}}{(\log n)^{\nu-1}}\big(\bm\theta_n-\bm\theta^{\ast}\big)-\sum_{a: Re(\lambda_a)=\rho, \nu_a=\nu} e^{-{\rm i}Im(\lambda_a)\log n} \xi_a\bm e_a\bm T^{-1}\to \bm 0\; a.s., $$
where ${\rm i}=\sqrt{-1}$, $\bm e_a=(\bm 0, \cdots,\bm 0,0,\cdots, 0,1,\bm 0,\cdots, \bm 0)$  is the vector such that the $\nu_a$-th element of its block $a$  is $1$ and other elements are zero, and $\bm e_a\bm T^{-1}=\bm r_{a\nu_a}$ is a right eigenvector of $\bm H$ with respect to the eigenvalue $\lambda_a$.
\end{theorem}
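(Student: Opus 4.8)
To prove Theorem \ref{theorem3}, the plan is to linearize the recursion (\ref{eqModel}) at $\bm\theta^{\ast}$, pass to the Jordan form of $\bm H:=D\bm h(\bm\theta^{\ast})$, solve each Jordan block by a discrete variation--of--constants formula, and then read off the leading term. Working on the event $\{\bm\theta_n\to\bm\theta^{\ast}\}$ of probability one and writing $\bm x_n:=\bm\theta_n-\bm\theta^{\ast}$, Assumption \ref{assump1} gives $\bm h(\bm\theta_n)=\bm x_n\bm H+\bm\varepsilon_{n+1}$ with $\|\bm\varepsilon_{n+1}\|=o(\|\bm x_n\|^{1+\epsilon})$, so for large $n$
\begin{equation*}
\bm x_{n+1}=\bm x_n\Big(\bm I_d-\frac{\bm H}{n+1}\Big)+\frac{\Delta\bm M_{n+1}+\bm r_{n+1}-\bm\varepsilon_{n+1}}{n+1}.
\end{equation*}
First I would invoke the convergence--rate estimates for recursive algorithms in the Appendix to obtain $\bm x_n=O(n^{-\rho'})$ a.s. for every $\rho'<\rho$; choosing $\rho'\in(\rho/(1+\epsilon),\rho)$ yields $\|\bm\varepsilon_n\|=o(n^{-\rho'(1+\epsilon)})$ with $\rho'(1+\epsilon)>\rho$, so that $\bm\varepsilon_n$ can be treated on the same footing as the remainder $\bm r_n$ and no circularity occurs.

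Next, set $\bm y_n:=\bm x_n\bm T$, so that the recursion uncouples into the $s$ blocks of $\bm J=diag(\bm J_1,\dots,\bm J_s)$, the $t$-th component $\bm y_n^{(t)}\in\mathbb C^{\nu_t}$ being governed by $\bm J_t=\lambda_t\bm I_{\nu_t}+\overline{\bm J}_{\nu_t}$. Fixing a large $n_0$ and letting $\bm\Psi_m^{(t)}:=\prod_{k=n_0}^{m-1}\big(\bm I_{\nu_t}-\bm J_t/(k+1)\big)^{-1}$ (the factors are commuting polynomials in $\bm J_t$, so the product is unambiguous and well defined for $n_0$ large), iteration gives
\begin{equation*}
\bm y_n^{(t)}=\Big(\bm y_{n_0}^{(t)}+\sum_{m=n_0+1}^{n}\frac{\widetilde{\bm M}_m^{(t)}+\widetilde{\bm r}_m^{(t)}-\widetilde{\bm\varepsilon}_m^{(t)}}{m}\,\bm\Psi_m^{(t)}\Big)\big(\bm\Psi_n^{(t)}\big)^{-1},
\end{equation*}
where the tilded quantities are the corresponding blocks of the $\bm T$--transformed increments. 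The asymptotics of $\bm\Psi_m^{(t)}$ follow from the factorization $\bm I_{\nu_t}-\bm J_t/(k+1)=(1-\lambda_t/(k+1))\big(\bm I_{\nu_t}-\overline{\bm J}_{\nu_t}/(k+1-\lambda_t)\big)$: the scalar product $\prod_k(1-\lambda_t/(k+1))$ is $\approx c_t m^{-\lambda_t}$ by Stirling's formula, while the nilpotent part equals $\exp\big\{\overline{\bm J}_{\nu_t}\sum_k(k+1-\lambda_t)^{-1}\big\}\cdot(\text{bounded})=\exp\{(\log m)\overline{\bm J}_{\nu_t}\}(\bm B_t+o(1))$ with $\bm B_t$ invertible and commuting with $\overline{\bm J}_{\nu_t}$. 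Hence $\|\bm\Psi_m^{(t)}\|\approx m^{Re\lambda_t}(\log m)^{\nu_t-1}$ and $\|(\bm\Psi_n^{(t)})^{-1}\|\approx n^{-Re\lambda_t}(\log n)^{\nu_t-1}$, the $(1,\nu_t)$--entry of $(\bm\Psi_n^{(t)})^{-1}$ being the only one carrying the full power $(\log n)^{\nu_t-1}$.

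Then I would identify the surviving blocks. The vector martingale $\bm N_n^{(t)}:=\sum_{m=n_0+1}^{n}m^{-1}\widetilde{\bm M}_m^{(t)}\bm\Psi_m^{(t)}$ has conditional quadratic variation with trace at most $C\sum_m m^{2Re\lambda_t-2}(\log m)^{2(\nu_t-1)}\ep[\|\Delta\bm M_m\|^2\big|\mathscr F_{m-1}]$, which by Abel summation against $\sum_{m\le n}\ep[\|\Delta\bm M_m\|^2\big|\mathscr F_{m-1}]=O(n)$ from (\ref{condition1}) is a.s. finite when $Re\lambda_t<1/2$ and is $O(n^{2Re\lambda_t-1}(\log n)^{2\nu_t-1})$ otherwise; consequently $\bm N_n^{(t)}$ converges a.s. when $Re\lambda_t<1/2$, while a multidimensional martingale strong law (Appendix) gives $\|\bm N_n^{(t)}\|=o\big(n^{(Re\lambda_t-1/2)\vee 0+\delta}\big)$ for every $\delta>0$ when $Re\lambda_t\ge1/2$. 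The $\bm r$-- and $\bm\varepsilon$--sums, treated by Abel summation using (\ref{condition2}) and the preliminary rate respectively, either converge (when $Re\lambda_t$ is close to $\rho$) or are $o\big(n^{Re\lambda_t-\rho-\delta_1}(\log n)^{\nu_t}\big)$ for some $\delta_1>0$. Multiplying through by $(\bm\Psi_n^{(t)})^{-1}$ and normalizing by $n^{\rho}(\log n)^{-(\nu-1)}$, every block with $Re\lambda_t>\rho$ and every block with $Re\lambda_t=\rho$ but $\nu_t<\nu$ contributes $\bm 0$ in the limit; for a block with $Re\lambda_t=\rho$ and $\nu_t=\nu$ the bracket converges a.s. to some $\bm Z^{(t)}$ (equal to $\bm y_{n_0}^{(t)}$ plus the limits of the three sums), so $\bm y_n^{(t)}=(\bm Z^{(t)}+o(1))(\bm\Psi_n^{(t)})^{-1}$, and since $(\bm\Psi_n^{(t)})^{-1}$ is, up to lower order, $c_t^{-1}n^{-\lambda_t}\exp\{-(\log n)\overline{\bm J}_{\nu}\}\bm B_t^{-1}$, whose only $(\log n)^{\nu-1}$--entry is the $(1,\nu)$ one, the $\nu$-th coordinate of $\bm y_n^{(t)}$ dominates and equals $e^{-{\rm i}\,Im\lambda_t\log n}\,\xi_t\,n^{-\rho}(\log n)^{\nu-1}(1+o(1))$ a.s. with $\xi_t$ a fixed multiple of the first coordinate of $\bm Z^{(t)}$, the other coordinates being $O(n^{-\rho}(\log n)^{\nu-2})$. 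Therefore $\bm y_n=\sum_{a:\,Re\lambda_a=\rho,\,\nu_a=\nu}e^{-{\rm i}\,Im\lambda_a\log n}\xi_a n^{-\rho}(\log n)^{\nu-1}\bm e_a+o\big(n^{-\rho}(\log n)^{\nu-1}\big)$ a.s., and multiplying by $\bm T^{-1}$ gives the claimed expansion, with $\bm e_a\bm T^{-1}=\bm r_{a\nu_a}$ and the eigenvector property of $\bm r_{a\nu_a}$ read off from $\bm T^{-1}\bm H=\bm J\bm T^{-1}$.

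The main obstacle is the combined estimate of the two middle steps: making the matrix--product asymptotics for $\bm\Psi_m^{(t)}$ sharp enough --- in particular controlling the bounded correction inside the nilpotent exponential so that the coefficient of $(\log n)^{\nu-1}$ is correctly pinned down --- and then keeping all bounds uniform across the regimes $Re\lambda_t<1/2$, $Re\lambda_t=1/2$, $Re\lambda_t>1/2$ and across the dichotomy $\nu_t<\nu$ versus $\nu_t=\nu$, while verifying throughout that the nonlinear error $\bm\varepsilon_n$ and the remainder $\bm r_n$, once amplified by $(\bm\Psi_n^{(t)})^{-1}$, remain negligible relative to $n^{-\rho}(\log n)^{\nu-1}$.
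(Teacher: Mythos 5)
Your proposal is correct and follows essentially the same route as the paper's proof: a preliminary a.s. rate $\bm\theta_n-\bm\theta^{\ast}=o(n^{-\rho+\delta})$ that lets the nonlinear error from Assumption \ref{assump1} be absorbed into the remainder, followed by the Jordan decomposition, a discrete variation-of-constants formula block by block with the product asymptotics $\|\prod_j(\bm I-\bm J_a/j)\|\approx n^{-Re(\lambda_a)}(\log n)^{\nu_a-1}$, Abel summation to show the bracket converges for the critical blocks, and extraction of the $(1,\nu)$-entry of $e^{-\overline{\bm J}_{\nu}\log n}$ as the dominant term. The only (immaterial) deviation is that you keep the martingale sum separate and prove its a.s. convergence for blocks with $Re(\lambda_t)<1/2$ via its quadratic variation, whereas the paper folds $\Delta\bm M_n$ into the remainder using $\bm M_n=o(n^{1/2+\delta})=o(n^{1-\rho-\delta_0'})$ and treats everything by a single Abel summation.
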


When $\rho>1/2$, the CLT is classical and can be found in the literature under various groups of settings. Moreover, the stepsize $\frac{1}{n+1}$ can be more general. One can refer to Duflo (1997), Benveniste et al. (1990), Kushner and Yin (2003, cf. Theorem 2.1, Chapter 10), Pelletier (1998), etc. Here, in considering applications to a general framework of GPU models, we present the following example under the Lindeberg condition.

\begin{theorem} \label{theorem1}  Suppose that $\bm \theta_n\to \bm\theta^{\ast}$ a.s., Assumptions \ref{assump0} and \ref{assump2} are satisfied, and $\rho >1/2$. Further, for the remainder term $\bm r_n$ we assume that
 \begin{equation}\label{conditionTh1.1}
  \sum_{k=1}^n \bm r_k=o(\sqrt{n}) \; a.s.\;\; \text{ or in } L_1.
  \end{equation}
  Then,
\begin{equation}\label{eqTh1.1} \sqrt{n} (\bm\theta_n-\bm \theta^{\ast})  \overset{D}\to N(\bm 0,\widetilde{\bm \Sigma})\; (\text{stably}),
\end{equation}
where
\begin{equation}\label{eqTh1.2} \widetilde{\bm \Sigma}= \int_0^{\infty}\big(e^{-( D\bm h(\bm \theta^{\ast})-\bm I_d/2)u})^{\rm t}\bm\Gamma e^{-(D\bm h(\bm \theta^{\ast})-\bm I_d/2)u}du.
\end{equation}
\end{theorem}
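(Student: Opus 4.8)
The plan is to reduce the problem to a martingale central limit theorem applied to a suitably transformed and rescaled version of $\bm\theta_n-\bm\theta^{\ast}$. First I would linearize the recursion: writing $\bm H=D\bm h(\bm\theta^{\ast})$ and $\bm x_n=\bm\theta_n-\bm\theta^{\ast}$, Assumption \ref{assump0} and \eqref{eqAssump0.1} give $\bm h(\bm\theta_n)=\bm x_n\bm H+\bm\varepsilon_n$ with $\bm\varepsilon_n=o(\|\bm x_n\|)$ on the event $\{\bm\theta_n\to\bm\theta^{\ast}\}$, so that
\begin{equation*}
\bm x_{n+1}=\bm x_n\Bigl(\bm I_d-\frac{\bm H}{n+1}\Bigr)+\frac{\Delta\bm M_{n+1}+\bm r_{n+1}-\bm\varepsilon_n}{n+1}.
\end{equation*}
Iterating this affine recursion produces $\bm x_n$ as a weighted sum of the noise terms against the matrix products $\prod_{k=m}^{n}\bigl(\bm I_d-\bm H/(k+1)\bigr)$. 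The standard asymptotics for such products (see the Appendix / the ODE-type estimates) give $\prod_{k=m}^{n}(\bm I_d-\bm H/(k+1))\approx (m/n)^{\bm H}$ up to lower-order factors, and since $\rho=Re(\lambda_{\min})>1/2$ the factor $n^{-\bm H}$ decays faster than $n^{-1/2}$, so the contribution of the initial condition $\bm x_0$ and of all error terms $\bm\varepsilon_n$, $\bm r_n$ is negligible after multiplication by $\sqrt n$; here \eqref{conditionTh1.1} on $\sum\bm r_k$ is exactly what is needed via an Abel summation argument.

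Next I would isolate the dominant martingale part. After the negligible terms are discarded, $\sqrt n\,\bm x_n$ is asymptotically equal to $\sqrt n\sum_{m=1}^{n}\frac{1}{m}\Bigl(\prod_{k=m}^{n-1}(\bm I_d-\bm H/(k+1))\Bigr)^{\!?}$-weighted sums of $\Delta\bm M_m$; changing variables $u=\log(n/m)$ and using $\prod\approx (m/n)^{\bm H}$ rewrites this as a Riemann-sum approximation to the martingale transform whose predictable quadratic variation converges, by \eqref{eqAssump2.3}, to
\begin{equation*}
\int_0^{\infty}\bigl(e^{-(\bm H-\bm I_d/2)u}\bigr)^{\rm t}\bm\Gamma\,e^{-(\bm H-\bm I_d/2)u}\,du=\widetilde{\bm\Sigma},
\end{equation*}
the integral being finite precisely because $\rho>1/2$. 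I would then invoke the multidimensional martingale CLT (the version for arrays with a possibly random limiting covariance, as in Hall and Heyde (1980), quoted in the Appendix), whose two hypotheses are the convergence of the predictable quadratic variation — supplied by \eqref{eqAssump2.3} together with the deterministic weight matrices — and the conditional Lindeberg condition — supplied by \eqref{eqAssump2.2}, since the weights are uniformly bounded. This yields stable convergence to the mixed normal law $N(\bm 0,\widetilde{\bm\Sigma})$.

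The main obstacle, and the step deserving the most care, is controlling the passage from the exact finite matrix products $\prod_{k=m}^{n}(\bm I_d-\bm H/(k+1))$ to their continuous surrogates $e^{-\bm H\log(n/m)}$ uniformly enough that (i) the error terms are genuinely $o(1/\sqrt n)$ after summation — this is where a possibly non-diagonal $\bm H$ with nontrivial Jordan blocks forces polylogarithmic corrections into the estimate, which must then be absorbed using $\rho>1/2$ — and (ii) the predictable quadratic variation of the truncated array converges to the stated integral rather than to some perturbed quantity. I expect to handle (i) by the explicit product asymptotics recorded in the Appendix and (ii) by dominated convergence, splitting the $u$-integral at a large but fixed $U$ and using the exponential decay $\|e^{-(\bm H-\bm I_d/2)u}\|=O(u^{\nu-1}e^{-(\rho-1/2)u})$ to make the tail uniformly small. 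Once these two points are in place, the conclusion \eqref{eqTh1.1}–\eqref{eqTh1.2} follows directly from the martingale CLT.
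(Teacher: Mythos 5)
Your skeleton (iterate the affine recursion, kill the initial-condition and remainder terms by Abel summation, apply the martingale CLT to the weighted sum with deterministic weights $\widetilde{\bm \Pi}_m^n\approx (n/m)^{-\bm H}$, and evaluate the predictable quadratic variation as a Riemann sum of $\int_0^\infty (e^{-(\bm H-\bm I_d/2)u})^{\rm t}\bm\Gamma e^{-(\bm H-\bm I_d/2)u}du$) is the same as the paper's, and those parts are fine. The gap is in your very first step, the additive linearization. Theorem \ref{theorem1} assumes only Assumption \ref{assump0}, so all you get is $\bm\varepsilon_n=o(\|\bm x_n\|)$ with no exponent to spare. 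Your claim that the $\bm\varepsilon_n$ contribution is ``negligible after multiplication by $\sqrt n$'' is circular: to bound $\sum_{m\le n}\frac{\|\bm\varepsilon_m\|}{m}\|\widetilde{\bm\Pi}_m^n\|$ you need a rate for $\|\bm x_m\|$, and the best a priori rate available ($\|\bm x_m\|=o(m^{-1/2+\delta})$ a.s., or $O(\sqrt{\log\log m/m})$ via an LIL) only yields $\bm\varepsilon_m=o(m^{-1/2+\delta})$, whence the summed contribution is $o(n^{-1/2+\delta})$ — not $o(n^{-1/2})$. A Gronwall-type absorption does not rescue this either, because $\delta_m:=\|\bm\varepsilon_m\|/\|\bm x_m\|\to 0$ comes with no rate. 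This is exactly why the paper imposes the stronger Assumption \ref{assump1} (error $o(\|\bm\theta-\bm\theta^{\ast}\|^{1+\epsilon})$) in the $\rho\le 1/2$ theorems, where the additive linearization is used.

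The paper's fix for $\rho>1/2$ is to avoid an additive error altogether: it writes $\bm h(\bm\theta)-\bm h(\bm\theta^{\ast})=(\bm\theta-\bm\theta^{\ast})\bm H(\bm\theta)$ \emph{exactly}, with $\bm H(\bm\theta)\to\bm H$ as $\bm\theta\to\bm\theta^{\ast}$, so the recursion becomes $\bm x_{n+1}=\bm x_n(\bm I_d-\bm H_{n+1}/(n+1))+(\Delta\bm M_{n+1}+\bm r_{n+1})/(n+1)$ with random matrices $\bm H_{n+1}=\bm H(\bm\theta_n)$. The resulting random products $\bm\Pi_m^n$ still satisfy $\|\bm\Pi_m^n\|\le C_\delta(n/m)^{-\rho+\delta}$ and $\bm\Pi_m^n-\widetilde{\bm\Pi}_m^n=o(1)(n/m)^{-\rho+\delta}$ (Proposition \ref{lemma1}(ii)), and the one extra step your plan is missing is Proposition \ref{prop2.1}(i): showing $\sqrt n\sum_m\frac{\Delta\bm M_m}{m}(\bm\Pi_m^n-\widetilde{\bm\Pi}_m^n)\to\bm 0$ in probability, which is done by Abel summation using $\|\bm M_m\|=O_{L_1}(\sqrt m)$. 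If you want to keep your additive decomposition, you must either strengthen the hypothesis to Assumption \ref{assump1} or replace the linearization by this multiplicative one.
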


\begin{remark} The condition (\ref{eqAssump1.1}) cannot be weakened to (\ref{eqAssump0.1}) in Theorems \ref{theorem2} and \ref{theorem3}. The convergence rates in conditions (\ref{conditionTh2.1}) or (\ref{conditionTh2.2}) cannot be weakened in Theorem \ref{theorem2}. For examples see Appendix \ref{sectionexample}.
\end{remark}

From the proof of Theorems (cf. (\ref{eqproofth3.3}) and (\ref{eqproof1.2})), we have the following corollary on the rate of the a.s. convergence.
\begin{corollary}\label{corollary1} Suppose that $\bm \theta_n\to \bm\theta^{\ast}$ a.s., Assumption \ref{assump0} is satisfied with  $\rho>0$. Further assume that condition (\ref{condition1}) in Theorem \ref{theorem3} is satisfied, and
$\frac{1}{n}\sum_{k=1}^n \bm r_k=o(n^{-\frac{1}{2}\wedge \rho+\delta})$  a.s.  for all $\delta>0$.
Then,
$$  \bm\theta_n-\bm\theta^{\ast}=o(n^{-\frac{1}{2}\wedge\rho+\delta})\;\; a.s. \text{ for all } \delta>0. $$
\end{corollary}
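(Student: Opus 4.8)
The plan is to read the rate off estimates already produced inside the proofs of Theorems~\ref{theorem2}, \ref{theorem3} and \ref{theorem1}, and then to remove the nonlinear error by a short self-improving (bootstrap) argument. Write $\widetilde{\bm\theta}_n:=\bm\theta_n-\bm\theta^{\ast}$, $\bm H:=D\bm h(\bm\theta^{\ast})$ and $\beta:=\rho\wedge\tfrac12$. Because $\widetilde{\bm\theta}_n\to\bm 0$ a.s., the expansion (\ref{eqAssump0.1}) provided by Assumption~\ref{assump0} lets us rewrite (\ref{eqModel}) as
\[
\widetilde{\bm\theta}_{n+1}=\widetilde{\bm\theta}_n\Big(\bm I_d-\frac{\bm H}{n+1}\Big)+\frac{\Delta\bm M_{n+1}+\bm r_{n+1}+\bm g_n}{n+1},\qquad \|\bm g_n\|=o\big(\|\widetilde{\bm\theta}_n\|\big),
\]
and, unwinding this with the matrix products $\bm\Pi(m,n):=\prod_{j=m+1}^{n}\big(\bm I_d-\bm H/j\big)$, for each fixed $m$,
\[
\widetilde{\bm\theta}_n=\widetilde{\bm\theta}_m\,\bm\Pi(m,n)+\bm A_n+\bm B_n+\bm G_n ,
\]
where $\bm A_n:=\sum_{k}k^{-1}\Delta\bm M_k\,\bm\Pi(k,n)$ is a martingale transform, $\bm B_n:=\sum_{k}k^{-1}\bm r_k\,\bm\Pi(k,n)$ and $\bm G_n:=\sum_{k}k^{-1}\bm g_k\,\bm\Pi(k,n)$ (the sums running over $m<k\le n$).

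First I would dispose of the three ``linear'' terms. Decomposing $\bm H$ along its Jordan blocks as in Assumption~\ref{assump0}, the block attached to an eigenvalue $\lambda_t$ contributes to $\bm\Pi(m,n)$ a factor of norm $\approx (m/n)^{Re(\lambda_t)}\big(1+\log(n/m)\big)^{\nu_t-1}$; feeding this into the multi-dimensional martingale results of the Appendix and using only the quadratic-variation hypothesis (\ref{condition1}), one obtains that $\|\widetilde{\bm\theta}_m\bm\Pi(m,n)\|$ and $\|\bm A_n\|$ are $O\big(n^{-\beta+\delta}\big)$ a.s.\ for every $\delta>0$ --- the blocks with $Re(\lambda_t)=\rho$ are the slowest and yield $n^{-\rho}(\log n)^{\nu-1}$, while those with $Re(\lambda_t)\ge\tfrac12$ never beat $n^{-1/2}$ up to a power of $\log n$. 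These are precisely the a.s.\ estimates (\ref{eqproofth3.3}) and (\ref{eqproof1.2}) obtained in the proofs of Theorems~\ref{theorem3} and \ref{theorem1} (respectively for $\rho<\tfrac12$ and $\rho>\tfrac12$); the boundary case $\rho=\tfrac12$ is identical and is already carried out in the proof of Theorem~\ref{theorem2}. For $\bm B_n$, set $\bm R_n:=\sum_{k\le n}\bm r_k$, which by the corollary's hypothesis is $o(n^{1-\beta+\delta})$ for every $\delta>0$, and sum by parts; using the exact identity
\[
\frac{\bm\Pi(k,n)}{k}-\frac{\bm\Pi(k+1,n)}{k+1}=\frac{\bm I_d-\bm H}{k(k+1)}\,\bm\Pi(k+1,n),
\]
both the Abel sum and the two boundary terms are $o\big(n^{-\beta+\delta}\big)$ a.s. Hence $\widetilde{\bm\theta}_m\bm\Pi(m,n)+\bm A_n+\bm B_n=O\big(n^{-\beta+\delta}\big)$ a.s.\ for every $\delta>0$.

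What is left is to absorb $\bm G_n$, and this is where the self-improvement enters. Fix $\delta>0$, put $a_n:=n^{\beta-\delta/2}\|\widetilde{\bm\theta}_n\|$ and $A_n:=\max_{k\le n}a_k$. Since $\|\bm g_k\|\le\varepsilon_k\|\widetilde{\bm\theta}_k\|$ with $\varepsilon_k\to0$ and $\|\bm\Pi(k,n)\|\le C(k/n)^{\rho-\delta/4}$ for $k\ge m$ with $m$ chosen large,
\[
\|\bm G_n\|\le C\,A_n\,n^{-(\rho-\delta/4)}\sum_{m<k\le n}\varepsilon_k\,k^{\rho-\beta+\delta/4-1}=A_n\cdot o\big(n^{-\beta+\delta/2}\big),
\]
using $\rho-\beta+\delta/4>0$ and $\varepsilon_k\to0$. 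Combining the last two displays gives $a_n\le C_0+A_n\varepsilon_n'$ for some a.s.\ finite $C_0$ and some $\varepsilon_n'\to0$, which forces $\sup_nA_n<\infty$ a.s.; therefore $\|\widetilde{\bm\theta}_n\|=O\big(n^{-\beta+\delta/2}\big)=o\big(n^{-\beta+\delta}\big)$, and as $\delta>0$ was arbitrary the corollary follows.

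I expect the genuine obstacle to be the point stressed throughout the paper: since $\bm H=D\bm h(\bm\theta^{\ast})$ need not be diagonalizable, even in an adapted norm for which $\|\bm I_d-\bm H/n\|\le 1-(\rho-\epsilon)/n$ a scalar Gronwall argument loses the martingale cancellation (it replaces $\|\sum_k k^{-1}\Delta\bm M_k\bm\Pi(k,n)\|$ by $\sum_k k^{-1}\|\Delta\bm M_k\|\,\|\bm\Pi(k,n)\|$), and bounding $\bm\Pi(m,n)$ crudely through its largest eigenvalue is equally fatal; either move returns only $\|\widetilde{\bm\theta}_n\|=O(1)$. Performing the analysis block by block so as to keep that cancellation while correctly tracking the $(\log n)^{\nu-1}$ factors produced by the nilpotent parts $\overline{\bm J}_{\nu_t}$ is the real work, and it is exactly that bookkeeping that (\ref{eqproofth3.3}) and (\ref{eqproof1.2}) already contain; the present statement only has to invoke it, clean up the nonlinear remainder, and remark that the two ingredients actually needed --- (\ref{condition1}) and a cumulative bound on $\bm r_n$ --- are weaker than the hypotheses of the individual theorems.
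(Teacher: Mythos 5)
Your proof is correct, and the linear part of your analysis (Abel summation against the products $\bm\Pi(k,n)$, the bound $\|\bm\Pi(k,n)\|\le C_\delta(k/n)^{\rho-\delta}$ from the Jordan decomposition, and $\bm M_n=o(n^{1/2+\delta})$ a.s.\ from (\ref{condition1})) coincides with what the paper does in (\ref{eqproofth3.3}) and (\ref{eqproof1.2}). Where you genuinely diverge is in the treatment of the nonlinearity of $\bm h$. The paper never produces a remainder $\bm g_n=o(\|\bm\theta_n-\bm\theta^{\ast}\|)$ at all: it defines the state-dependent matrix $\bm H(\bm\theta)$ so that $\bm h(\bm\theta)=(\bm\theta-\bm\theta^{\ast})\bm H(\bm\theta)$ holds \emph{exactly}, unwinds the recursion with the random products $\bm\Pi_m^n=\prod_{j>m}(\bm I_d-\bm H_j/j)$, $\bm H_j=\bm H(\bm\theta_{j-1})$, and then invokes Proposition \ref{lemma1}(ii) (which only needs $\bm H_n\to\bm H$ a.s.) to get $\|\bm\Pi_m^n\|\le C_\delta(n/m)^{-\rho+\delta}$; the corollary is then literally read off the two displayed estimates with no further argument. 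You instead linearize around the fixed Jacobian $\bm H$, carry the error term $\bm G_n$ explicitly, and close the loop with a Gronwall-type bootstrap $a_n\le C_0+A_n\varepsilon_n'$, which is sound (the step from this inequality to $\sup_nA_n<\infty$ works since $A_n>A_N$ forces $A_n\le 2C_0$ once $\varepsilon_n'\le 1/2$) and correctly preserves the martingale cancellation in $\bm A_n$ rather than pushing norms inside the sum. The trade-off: the paper's exact-linearization device makes the corollary a one-line consequence of estimates already proved but requires the perturbation result Proposition \ref{lemma1}(ii) for random matrix products; your route needs only the deterministic product bound of Proposition \ref{lemma1}(i) at the cost of the extra self-improving step. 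Both use exactly the hypotheses of the corollary ((\ref{eqAssump0.1}) rather than (\ref{eqAssump1.1}), and only the cumulative bound on $\bm r_n$), so your closing remark about the weakened assumptions is accurate.
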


 \bigskip
 Now, we give the proof of Theorems \ref{theorem2}-\ref{theorem1}.
 Write $\bm H=D\bm h(\bm\theta^{\ast})$,
 $$ \bm H(\bm\theta)=\begin{cases}
 \bm H+\frac{(\bm \theta-\bm\theta^{\ast})^{\rm t}}{\|\bm\theta-\bm\theta^{\ast}\|}\frac{\bm h(\bm \theta)-\bm h(\bm\theta^{\ast})-(\bm\theta-\bm\theta^{\ast})\bm H}{\|\bm\theta-\bm\theta^{\ast}\|}, &
 \text{ if } \bm \theta\ne \bm\theta^{\ast},\\
 \bm H, & \text{ if } \bm \theta=\bm\theta^{\ast}.
 \end{cases} $$
 Then,   $\bm H(\bm\theta)\to \bm H$ as $\bm\theta\to \bm\theta^{\ast}$ and
$$ \bm h(\bm \theta)=\bm h(\bm \theta^{\ast})+(\bm\theta-\bm\theta^{\ast})\bm H(\bm \theta). $$
Let $\bm H_{n+1}=\bm H(\bm \theta_n)$, $\bm \Pi_n^n=\bm I_d$  and for all $0\le m\le n-1$
\begin{equation}\label{eqproofth1.0} \bm \Pi_m^n  =\Big(\bm I_d-\frac{\bm H_{m+1}}{m+1}\Big)\cdots\Big(\bm I_d-\frac{\bm H_n}{n}\Big),\; \widetilde{\bm \Pi}_m^n=\prod_{j=m+1}^n\Big(\bm I_d-\frac{\bm H}{j}\Big).
\end{equation}
  Then, $\bm H_n\to \bm H$ a.s. as $n\to \infty$. It follows that  for all $1\le m\le n-1$, $\|\bm \Pi_m^n\|\le C_{\delta}\big(n/m)^{-\rho+\delta}$ a.s. and $\|\bm \Pi_0^n\|\le C_{\delta}n^{-\rho+\delta}$ a.s. by Proposition \ref{lemma1} (i) in the Appendix. By (\ref{eqModel}),
\begin{equation}  \bm\theta_{n+1}-\bm\theta^{\ast}=(\bm\theta_n-\bm\theta^{\ast})\left(\bm I_d-\frac{\bm H_{n+1}}{n+1}\right)+\frac{\Delta \bm M_{n+1}+\bm r_{n+1}}{n+1}.
\end{equation}
It follows that
 \begin{equation}\label{eqproofth1.1}
 \bm\theta_n-\bm\theta^{\ast}=(\bm\theta_0-\bm\theta^{\ast})\bm \Pi_0^n+\sum_{m=1}^n \frac{\Delta\bm M_m}{m}\bm \Pi_m^n+\sum_{m=1}^n \frac{\bm r_m}{m}\bm \Pi_m^n.
 \end{equation}
 If we write $\bm s_n=\sum_{m=1}^n \bm r_m$, then the last term is
\begin{align}\sum_{m=1}^n \frac{\bm r_m}{m}\bm \Pi_m^n= &\frac{\bm s_n}{n}\bm \Pi_n^n+\sum_{m=1}^{n-1}\bm s_m\left(\frac{1}{m}\bm \Pi_m^n-\frac{1}{m+1}\bm \Pi_{m+1}^n\right) \nonumber \\
\label{eqproofth1.2}
=&\frac{\bm s_n}{n}\bm \Pi_n^n+\sum_{m=1}^{n-1}\bm s_m\frac{\bm I_d-\bm H_{m+1}}{m(m+1)}\bm \Pi_{m+1}^n.
\end{align}

{\sc Proof of Theorems \ref{theorem2} and \ref{theorem1}.} First we consider the case of $\rho=1/2$.
Suppose the conditions in Theorem \ref{theorem2} are satisfied.
 At first, (\ref{conditionTh2.1}) or (\ref{conditionTh2.2}) will implies that
\begin{equation}\label{eqproofth3.1}
\sum_{m=1}^n \bm r_m=o(n^{1/2+\delta})\; a.s. \;\; \forall \delta>0.
\end{equation}
In fact, it is sufficient to show that (\ref{conditionTh2.2}) implies (\ref{eqproofth3.1}). Note that
 \begin{equation}\label{eqproofth3.1ad}\sum_{m=1}^n \|\bm r_m\|\le \sqrt{n}\sum_{m=1}^n \frac{\|\bm r_m\|}{\sqrt{m}} =o(\sqrt{n\log n})\; a.s. \;\; a.s. \;\text{ or in } \; L_1.
 \end{equation}
Assume that the above inequality holds in the sense of $L_1$. Then,
\begin{align*}
\sum_k \pr \Big(\sum_{m=1}^{2^{k+1}} \|\bm r_m\|\ge \epsilon (2^k)^{1/2+2\delta}\Big)\le C \sum_k\frac{(2^{k+1}\log 2^{k+1})^{1/2}}{(2^k)^{1/2+2\delta}}<\infty.
\end{align*}
It follows that for $2^k\le n\le 2^{k+1}$,
$$\frac{\sum_{m=1}^n \|\bm r_m\|}{n^{1/2+2\delta}}\le \frac{\sum_{m=1}^{2^{k+1}} \|\bm r_m\|}{(2^k)^{1/2+2\delta}}\to 0 \;\; a.s. $$
(\ref{eqproofth3.1}) is true.

In contrast, one can verify that condition (\ref{condition1}) or (\ref{eqAssump2.3}) implies that
\begin{equation}\label{eqproofth3.2}
\bm M_n=o(n^{1/2+\delta})\; a.s. \; \text{ for all } \delta>0.
\end{equation}
Recall  (\ref{eqproofth1.1}), (\ref{eqproofth1.2}) and $\bm H_n\to \bm H$ a.s. as $n\to\infty$. We have
\begin{align}\label{eqproofth3.3}
\bm\theta_n-\bm\theta^{\ast}=&(\bm\theta_0-\bm\theta^{\ast})\bm \Pi_0^n+\frac{\bm M_n+\bm s_n}{n}\bm \Pi_n^n+\sum_{m=1}^{n-1}(\bm M_m+\bm s_m)\frac{\bm I_d-\bm H_{m+1}}{m(m+1)}\bm \Pi_{m+1}^n\nonumber\\
=& o(n^{-\rho+\delta})+\frac{o(n^{1/2+\delta})}{n} +\sum_{m=1}^{n-1}\frac{o(m^{1/2+\delta})}{m(m+1)}\big(\frac{n}{m}\big)^{-\rho+\delta}\nonumber\\
=&o(n^{-1/2+2\delta})\;\; a.s.\;\; \forall \delta>0.
\end{align}
According to condition (\ref{eqAssump1.1}), we have
\begin{align}\label{eqproofth3.5} \bm r_{n+1}^{\ast}=:& (\bm \theta_n-\bm\theta^{\ast})\bm H -\Big(\bm h(\bm\theta_n)-\bm h(\bm\theta^{\ast})\Big)
\nonumber\\
=&  o(\|\bm\theta_n-\bm\theta^{\ast}\|^{1+\epsilon})=o(n^{-1/2-\epsilon/4})\;\; a.s.
\end{align}
 and
$$ \bm\theta_{n+1}-\bm \theta^{\ast}=(\bm\theta_n-\bm \theta^{\ast})\Big(\bm I_d-\frac{\bm H}{n+1}\Big)+\frac{\Delta \bm M_{n+1}+\bm r_{n+1}+\bm r_{n+1}^{\ast}}{n+1}. $$
  It follows that
 \begin{align*}
 \bm\theta_n-\bm\theta^{\ast}=&(\bm\theta_0-\bm\theta^{\ast})\widetilde{\bm \Pi}_0^n+\sum_{m=1}^n \frac{\Delta\bm M_m}{m}\widetilde{\bm \Pi}_m^n+\sum_{m=1}^n \frac{\bm r_m}{m}\widetilde{\bm \Pi}_m^n  +\sum_{m=1}^n \frac{\bm r_m^{\ast}}{m}\widetilde{\bm \Pi}_m^n\nonumber\\
 =:&(\bm\theta_0-\bm\theta^{\ast})\widetilde{\bm \Pi}_0^n+\bm\zeta_n+\bm \eta_n+\bm \eta_n^{\ast},
 \end{align*}
where  $\widetilde{\bm \Pi}_m^n$ is defined as in (\ref{eqproofth1.0}), and $\|\widetilde{\bm\Pi}_m^n\|\le C (n/m)^{-1/2}\log ^{\nu-1}(n/m)$   by Proposition \ref{lemma1} (i) in the Appendix. Thus,
$$ \bm \eta_n^{\ast} =O(1)\sum_{m=1}^n \frac{m^{-1/2-\epsilon/4} }{m}\big(\frac{n}{m}\big)^{-1/2}\log^{\nu-1}\frac{n}{m} =o(n^{-1/2}(\log n)^{\nu-1/2})\; a.s.  $$
  If (\ref{conditionTh2.2}) is satisfied, then
$$ \bm \eta_n =O(1)\sum_{m=1}^n \frac{\|\bm r_m\|}{m}\big(\frac{n}{m}\big)^{-1/2}\log^{\nu-1}\frac{n}{m} =o(n^{-1/2}(\log n)^{\nu-1/2})\; a.s. \text{ or in } L_1. $$
If (\ref{conditionTh2.1}) is satisfied, then we also have
\begin{align*}  \bm \eta_n =&\frac{\bm s_n}{n}  \widetilde{\bm \Pi}_n^n+\sum_{m=1}^{n-1}\bm s_m\frac{\bm I_d-\bm H}{m(m+1)}\widetilde{\bm \Pi}_{m+1}^n\\
=&O(1)\sum_{m=1}^n \frac{o(\sqrt{m/\log m})}{m^2}\big(\frac{n}{m}\big)^{-1/2}\log^{\nu-1}\frac{n}{m} \\
=&O(1)n^{-1/2}\log^{\nu-1} n\sum_{m=1}^n \frac{o(1)}{m\sqrt{\log m}} \\
=&o(n^{-1/2}(\log n)^{\nu-1/2})\; a.s.
\end{align*}
At last, $\bm\zeta_n$ is a  sum of martingale differences. By verifying the Lindeberg condition and checking the variance,  we can show that
$$ \frac{\sqrt{n}}{(\log n)^{\nu-1/2}}\bm\zeta_n \overset{D}\to N(\bm 0,\widetilde{\bm\Sigma})\;(\text{stably}) $$
via the CLT for martingales (cf. Corollary 3.1 of Hall and Heyde (1980)). The above convergence is stated in Proposition \ref{prop2.1} in the Appendix. The proof of Theorem \ref{theorem2} is complete.

Now, we consider the case of $\rho>1/2$. Suppose Assumptions \ref{assump0}, \ref{assump2} and (\ref{conditionTh1.1}) are satisfied.
It is obvious that the first term of (\ref{eqproofth1.1}) is  $O(1)n^{-\rho+\delta}=o(n^{-1/2})$ a.s.,
 and the last term is
\begin{align*}\sum_{m=1}^n \frac{\bm r_m}{m}\bm \Pi_m^n=  \frac{o(\sqrt{n})}{n} +\sum_{m=1}^{n-1}\frac{o(\sqrt{m})}{m(m+1)}\big(\frac{n}{m}\big)^{-\rho+\delta}=o(n^{-1/2})
\end{align*}
in probability by (\ref{conditionTh1.1}) and (\ref{eqproofth1.2}). The middle term of (\ref{eqproofth1.1}) is a sum of weighted martingale differences. Unfortunately, we can not apply the CLT for martingales directly because $\big\{\frac{\Delta\bm M_m}{m} \bm \Pi_m^n; m=1,\cdots, n\big\}$ is not an array of martingale differences. We can show that the random weight
$\bm \Pi_m^n$ can be replaced by the non-random weight
$\widetilde{\bm \Pi}_m^n$, i.e.,
\begin{equation}\label{eqproofth1.3} \sqrt{n}\sum_{m=1}^n \frac{\Delta\bm M_m}{m}\left(\bm \Pi_m^n-\widetilde{\bm \Pi}_m^n\right)\to \bm 0 \; \text{ in probability}.
\end{equation}
Now, $\big\{\frac{\Delta\bm M_m}{m} \widetilde{\bm \Pi}_m^n; m=1,\cdots, n\big\}$ is an array of martingale differences. By verifying the Lindeberg condition and checking the variance, we can show that
$$ \sqrt{n}\sum_{m=1}^n \frac{\Delta\bm M_m}{m}\widetilde{\bm \Pi}_m^n\overset{D}\to N(\bm 0,\bm\Sigma)\;(\text{stably}) $$
via the CLT for martingales. The above convergence and (\ref{eqproofth1.3}) are stated in Proposition \ref{prop2.1} in the Appendix. Thus, (\ref{eqTh1.1}) is proved.
The proof is now complete. \hfill $\Box$

 \bigskip

{\sc Proof of Theorem \ref{theorem3}.}
Recall (\ref{eqproofth1.1}) and $\bm H_n\to \bm H$ a.s. as $n\to\infty$. By (\ref{condition2}) and (\ref{eqproofth3.2}) we have
\begin{align*}
&\bm\theta_n-\bm\theta^{\ast}=(\bm\theta_0-\bm\theta^{\ast})\bm \Pi_0^n+\frac{\bm M_n+\bm s_n}{n}\bm \Pi_n^n+\sum_{m=1}^{n-1}(\bm M_m+\bm s_m)\frac{\bm I_d-\bm H_{m+1}}{m(m+1)}\bm \Pi_{m+1}^n\\
&= o(n^{-\rho+\delta})+\frac{o(n^{1/2+\delta/2})+o(n^{1-\rho})}{n} +\sum_{m=1}^{n-1}\frac{o(m^{1/2+\delta/2})+o(m^{1-\rho})}{m(m+1)}\big(\frac{n}{m}\big)^{-\rho+\delta}\\
& =o(n^{-\rho+\delta})\;\; a.s.
\end{align*}
It follows that
\begin{equation}\label{eqproof1.2} n^{\rho-\delta}(\bm\theta_n-\bm\theta^{\ast})\to \bm 0\;\; a.s. \text{ for all } \delta>0.
\end{equation}
According to (\ref{eqAssump1.1}), we can rewrite (\ref{eqModel}) as
$$ \bm\theta_{n+1}-\bm\theta^{\ast}=(\bm\theta_n-\bm\theta^{\ast})\left(\bm I_d-\frac{\bm H}{n+1}\right)+\frac{\bm r_{n+1}^{\ast}}{n+1}, $$
where
$ \bm r_{n+1}^{\ast}=o(\|\bm\theta_n-\bm\theta^{\ast}\|^{1+\epsilon})+\Delta \bm M_{n+1}+\bm r_{n+1}.$
From (\ref{condition2}), (\ref{eqproofth3.2}) and (\ref{eqproof1.2}), it follows that
$ \bm s_n^{\ast}=:\sum_{k=1}^n \bm r_k^{\ast}=o(n^{1-\rho-\delta})$ a.s.   for some $\delta>0$.
Recall that $\bm H$ has the Jordan canonical form
$  \bm T^{-1} \bm H\bm T= diag(\bm J_1, \cdots,\bm J_s), $
with  $\bm J_a=\lambda_a \bm I_{\nu_a}+\overline{\bm J}_{\nu_a}$. Denote
$(\bm \theta_n-\bm\theta^{\ast})\bm T:=\bm y_n=(\bm y_{n,1},\cdots,\bm y_{n,s})$,
$\bm r_n^{\ast}\bm T:=\widetilde{\bm r}_n=(\widetilde{\bm r}_{n,1},\cdots,\widetilde{\bm r}_{n,s})$, $\bm s_n^{\ast}\bm T:=\widetilde{\bm s}_n=(\widetilde{\bm s}_{n,1},\cdots,\widetilde{\bm s}_{n,s})$. Then,
$$ \bm y_{n+1,a}=\bm y_{n,a}\left(\bm I_{\nu_a} -\frac{\bm J_a}{n+1}\right)+\frac{\widetilde{\bm r}_{n+1,a}}{n+1}. $$
Write
$$ \widetilde{\bm \Pi}_k^{n,a} =\prod_{j=k+1}^n \left(\bm I_{\nu_a}-\frac{\bm J_a}{j}\right). $$
Then,
$$ \|\widetilde{\bm \Pi}_0^{n,a} \|\le C  n^{-Re(\lambda_a)}(\log n)^{\nu_a-1},\;\; \|\widetilde{\bm \Pi}_k^{n,a} \|\le  C \big(\frac{n}{k}\big)^{-Re(\lambda_a)}\big(\log \frac{n}{k}\big)^{\nu_a-1}, $$
$ 1\le k\le n$.
If $Re(\lambda_a)<1$ , then $ \widetilde{\bm \Pi}_0^{n,a} n^{\bm J_a}\to\bm A_a$,    $n^{-\bm J_a}(\widetilde{\bm \Pi}_0^{n,a})^{-1}\to \bm A_a^{-1}$
for an invertible matrix $\bm A_a$,   and
$ \|\widetilde{\bm \Pi}_0^{n,a} \|\approx   n^{-Re(\lambda_a)}(\log n)^{\nu_a-1}$. We have
\begin{align*}
\bm y_{n,a} =&\bm y_{0,0}\widetilde{\bm \Pi}_0^{n,a} +\sum_{k=1}^n \frac{\widetilde{\bm r}_{k,a}}{k}\widetilde{\bm \Pi}_k^{n,a}\\
=&\bm y_{0,0}\widetilde{\bm \Pi}_0^{n,a} +\frac{ \widetilde{\bm s}_{n,a}}{n}\widetilde{\bm \Pi}_n^{n,a}+\sum_{k=1}^{n-1} \frac{ \widetilde{\bm s}_{k,a}}{k}\frac{\bm I_{\nu_a}-\bm J_a}{k+1}\widetilde{\bm \Pi}_{k+1}^{n,a}.
\end{align*}
If $Re(\lambda_a)>\rho$, then
\begin{align*}\|\bm y_{n,a}\|=&O(1)n^{-Re(\lambda_a)}(\log n)^{\nu_a-1}+
o(n^{-\rho-\delta})\\
&+\sum_{k=1}^{n-1} \frac{ o(k^{-\rho-\delta})}{k+1}\big(\frac{n}{k}\big)^{-Re(\lambda_a)}\big(\log \frac{n}{k}\big)^{\nu_a-1}\\
=&o(n^{-\rho-\kappa})\;\; a.s.
\end{align*}
for some $0<\kappa<Re(\lambda_a)-\rho$. If $Re(\lambda_a)=\rho$, then
\begin{align*}\|\bm y_{n,a}\|=&O(1)n^{-\rho}(\log n)^{\nu_a-1}+
o(n^{-\rho-\delta})+\sum_{k=1}^{n-1} \frac{ o(k^{-\rho-\delta})}{k+1}\big(\frac{n}{k}\big)^{-\rho}\big(\log \frac{n}{k}\big)^{\nu_a-1}\\
=&O(1)n^{-\rho}(\log n)^{\nu_a-1}=o\big( n^{-\rho}(\log n)^{\nu-1} \big)\;\; a.s.\text{ when } \nu_a<\nu.
\end{align*}
Finally, consider the $\bm y_{n,a}$ with $Re(\lambda_a)=\rho$ and $\nu_a=\nu$. Note that
\begin{align*}
\bm y_{n,a}\big(\widetilde{\bm \Pi}_0^{n,a}\big)^{-1}
= \bm y_{0,a}  +\frac{ \widetilde{\bm s}_{n,a}}{n}\big(\widetilde{\bm \Pi}_0^{n,a}\big)^{-1}+\sum_{k=1}^{n-1} \frac{\widetilde{  \bm s }_{k,a}}{k}\frac{\bm I_{\nu_a}-\bm J_a}{k+1}\big(\widetilde{\bm \Pi}_0^{k+1,a}\big)^{-1}.
\end{align*}
Observe that
$$ \frac{ \widetilde{\bm s}_{n,a}}{n}\big(\widetilde{\bm \Pi}_0^{n,a}\big)^{-1}=o(n^{-\rho-\delta})n^{\rho}(\log n)^{\nu_a-1}\to 0\;\; a.s., $$
$$\sum_{k=1}^{\infty} \left\|\frac{\widetilde{  \bm s }_{k,a}}{k}\frac{\bm I_{\nu_a}-\bm J_a}{k+1}\big(\widetilde{\bm \Pi}_0^{k+1,a}\big)^{-1}
 \right\|\le c\sum_{k=1}^{\infty} \frac{O(k^{1-\rho-\delta})}{k^2}k^{\rho}(\log k)^{\nu_a-1}<\infty\;\;a.s. $$
 It follows that
 \begin{align*}
\bm y_{n,a}\big(\widetilde{\bm \Pi}_0^{n,a}\big)^{-1}
\to \bm y_{0,a} +\sum_{k=1}^{\infty} \frac{\widetilde{  \bm s }_{k,a}}{k}\frac{\bm I_{\nu_a}-\bm J_a}{k+1}\big(\widetilde{\bm \Pi}_0^{k+1,a}\big)^{-1} a.s.
\end{align*}
Thus,
 \begin{align*}
\bm y_{n,a}n^{\bm J_a}
\to \bm\xi_a=:\left[\bm y_{0,a} +\sum_{k=1}^{\infty} \frac{\widetilde{  \bm s }_{k,a}}{k}\frac{\bm I_{\nu_a}-\bm J_a}{k+1}\big(\widetilde{\bm \Pi}_0^{k+1,a}\big)^{-1}\right]\bm A_a\;\; a.s.
\end{align*}
It follows that
\begin{align*}
&\bm y_{n,a}= (\bm \xi_a+o(1))n^{-\bm J_a}
=(\bm \xi_a+o(1))n^{-\lambda_a}\exp\{-\overline{\bm J}_{\nu_a}\log n\} \\
=& \sum_{j=0}^{\nu_a-1}(\bm \xi_a+o(1))n^{-\lambda_a}\frac{(-\overline{\bm J}_{\nu_a})^j (\log n)^j}{j!} \\
=&  \xi_{a,\nu_a-1}  n^{-\lambda_a}(-1)^{\nu_a-1}\frac{(\log n)^{\nu_a-1}}{(\nu_a-1)!}(0,\cdots, 0,1)+ o\big( n^{-\rho} (\log n)^{\nu-1} \big)\\
=& \xi_{a,\nu_a-1} n^{-{\rm i}Im(\lambda_a)}(-1)^{\nu_a-1}\frac{n^{-\rho}(\log n)^{\nu_a-1}}{(\nu_a-1)!}(0,\cdots, 0,1)
+ o\big( n^{-\rho} (\log n)^{\nu-1} \big),
\end{align*}
because $(\overline{\bm J}_{\nu_a})^{\nu_a}=\bm 0$. Denote $\xi_a= \xi_{a,\nu_a-1}(-1)^{\nu_a-1}\frac{1}{(\nu_a-1)!}$.  Then,
\begin{align*}
& \frac{n^{\rho}}{(\log n)^{\nu-1}}\bm y_n-  \sum_{a: Re(\lambda_a)=\rho, \nu_a=\nu}  e^{-{\rm i}Im(\lambda_a)\log n} \xi_a\bm e_a\to \bm 0\;\; a.s.,\\
& \frac{n^{\rho}}{(\log n)^{\nu-1}}\big(\bm\theta_n-\bm\theta^{\ast}\big)- \sum_{a: Re(\lambda_a)=\rho, \nu_a=\nu}  e^{-{\rm i}Im(\lambda_a)\log n} \xi_a\bm e_a\bm T^{-1}\to \bm 0\;\; a.s.
 \end{align*}
 The proof is complete.\hfill $\Box$

\section{Gaussian process approximation}\label{GaussApp}

Write $\bm H=D\bm h(\bm\theta^{\ast})$. Suppose that $\bm B(t)$ is a $d$-dimensional standard Brownian motion that is independent of $\bm \Gamma$. Let $\bm G_t$ be a solution of the following differential equation
\begin{equation}\label{eqSDE}
d\,\bm G(t)=-\frac{\bm G(t)}{t}\bm H dt +\frac{ d\,\bm B(t)}{t}\bm\Gamma^{1/2},\;\; \bm G(1)=\bm G_1.
\end{equation}
It can be verified that
$$ \bm G(t)=\int_1^t \frac{ d \bm B(x)\bm \Gamma^{1/2}}{x} \Big(\frac{x}{t}\Big)^{\bm H}+\bm G_1 t^{-\bm H},\;\; t>0. $$
When $\bm G_1=\bm 0$, for a given $\bm \Gamma$, $\bm G(t)$ is a Gaussian process  with the  variance-covariance matrix
\begin{equation} \Var\{\bm G(t)\}=\frac{1}{t} \int_0^{\log t}  \big(e^{-( D\bm h(\bm \theta^{\ast})-\bm I/2)u})^{\rm t}\bm\Gamma e^{-(D\bm h(\bm \theta^{\ast})-\bm I/2)u}du.
\end{equation}
It is obvious that for a given $\bm \Gamma$, the limit variabilities in (\ref{eqTh1.2}) and (\ref{eqTh2.2}) are, respectively,
$$ \lim_{t\to \infty} t \Var\big\{  \bm G(t)\big\} \;\; \text{ and }\;\; \lim_{t\to \infty}\frac{t}{\log^{2\nu-1} t} \Var\big\{  \bm G(t)\big\}. $$
The next theorem shows that $\bm\theta_n-\bm\theta^{\ast}$ can be approximated by the Gaussian process $\bm G(t)$ under certain conditions. From the Gaussian approximation, we can obtain the law of the iterated logarithm for $\bm \theta_n-\bm\theta^{\ast}$ and the functional central limit theorem for the process $\bm \theta_{[nt]}-\bm\theta^{\ast}$.
\begin{theorem}\label{theoremGauss} Suppose that Assumption \ref{assump1} is satisfied, $\bm\theta_n\to\bm\theta^{\ast}$ a.s. and
\begin{align}
&\sum_{m=1}^n \bm r_m=o(n^{1/2-\epsilon_0})\; a.s.,\label{eqThG.1}\\
&\sum_{m=1}^{\infty} \ep\left[\|\Delta\bm M_m\|^2 I\{\|\Delta\bm M_m\|^2\ge m^{1-\epsilon_0}\}\big|\mathscr{F}_{m-1}\right]/m^{1-\epsilon_0}<\infty \;\; a.s., \;\text{and} \label{eqThG.2}\\
& \sum_{m=1}^n \ep\left[(\Delta\bm M_m)^{\rm t}\Delta\bm M_m\big|\mathscr{F}_{m-1}\right]=n\bm \Gamma+o(n^{1-\epsilon_0})\;\; a.s.\;\text{ or in }\; L_1
\label{eqThG.3}
\end{align}
for some $0<\epsilon_0<1$, where $\bm \Gamma$ is a symmetric positive semidefinite matrix that is  $\mathscr{F}_m$-measurable for some $m$.
Then, ({\em possibly in an enlarged
probability space with the process $\{(\bm\theta_n, \bm M_n,\bm r_n); n\ge 1\}$ being
redefined without changing its distribution}) there is  a $d-$dimensional standard Brownian
motions $\bm B(t)$ that is independent of $\bm\Gamma$, such that
\begin{equation}\label{eqThG.4}
\bm \theta_n-\bm \theta^{\ast}=\bm G(n)+o(n^{-1/2-\tau})\;\;a.s. \; \text{for some } \tau>0,
\end{equation}
when $\rho>1/2$, and
\begin{equation}\label{eqThG.5}
\bm \theta_n-\bm \theta^{\ast}=\bm G(n)+O\big(n^{-1/2}\log^{\nu-1}n\big)\;\; a.s.
\end{equation}
when $\rho=1/2$, where $\bm G(t)$ is the solution of equation (\ref{eqSDE}).
\end{theorem}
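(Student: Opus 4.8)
The plan is to linearise (\ref{eqModel}) about $\bm\theta^{\ast}$, to replace the martingale part $\bm M_n$ by a Gaussian martingale by means of a strong invariance principle, and then to recognise the resulting Gaussian variation-of-parameters sum as a Riemann--Stieltjes discretisation of the stochastic integral that defines $\bm G(t)$.

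\smallskip\noindent\textit{Step 1: a priori rate and linearisation.} First, conditions (\ref{eqThG.1}) and (\ref{eqThG.3}) imply the hypotheses of Corollary \ref{corollary1}, so (recall $\rho\ge 1/2$ here) $\bm\theta_n-\bm\theta^{\ast}=o(n^{-1/2+\delta})$ a.s. for every $\delta>0$. Hence, with $\bm r_{n+1}^{\ast}:=(\bm\theta_n-\bm\theta^{\ast})\bm H-(\bm h(\bm\theta_n)-\bm h(\bm\theta^{\ast}))=o(\|\bm\theta_n-\bm\theta^{\ast}\|^{1+\epsilon})$ as in (\ref{eqproofth3.5}), the combined remainder $\bm\rho_m:=\bm r_m+\bm r_m^{\ast}$ satisfies $\sum_{k\le n}\bm\rho_k=o(n^{1/2-\tau})$ a.s. for some $\tau>0$, using (\ref{eqThG.1}). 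Exactly as in the proof of Theorem \ref{theorem3}, (\ref{eqModel}) becomes $\bm\theta_{n+1}-\bm\theta^{\ast}=(\bm\theta_n-\bm\theta^{\ast})(\bm I_d-\bm H/(n+1))+(\Delta\bm M_{n+1}+\bm\rho_{n+1})/(n+1)$, whence
\[
\bm\theta_n-\bm\theta^{\ast}=(\bm\theta_0-\bm\theta^{\ast})\widetilde{\bm\Pi}_0^n+\sum_{m=1}^n\frac{\Delta\bm M_m}{m}\widetilde{\bm\Pi}_m^n+\sum_{m=1}^n\frac{\bm\rho_m}{m}\widetilde{\bm\Pi}_m^n ,
\]
with $\widetilde{\bm\Pi}_m^n$ as in (\ref{eqproofth1.0}) and $\|\widetilde{\bm\Pi}_m^n\|\le C(n/m)^{-\rho}\log^{\nu-1}(n/m)$ by Proposition \ref{lemma1}(i).

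\smallskip\noindent\textit{Step 2: Gaussian approximation of the martingale.} By (\ref{eqThG.2}), (\ref{eqThG.3}) and the strong approximation theorem for multidimensional martingales given in the Appendix, on a possibly enlarged probability space carrying a copy of $\{(\bm\theta_n,\bm M_n,\bm r_n)\}$ with the same law there is a standard $d$-dimensional Brownian motion $\bm B$, independent of $\bm\Gamma$ (the dependence on $\bm\Gamma$ being handled conditionally on the $\sigma$-field that makes it measurable), such that $\max_{k\le n}\|\bm M_k-\bm B(k)\bm\Gamma^{1/2}\|=o(n^{1/2-\tau})$ a.s. for some $\tau>0$. Setting $\bm N(t):=\bm B(t)\bm\Gamma^{1/2}$ we have $\langle\bm N\rangle_t=t\bm\Gamma$, and by the solution formula for (\ref{eqSDE}), $\bm G(t)-\bm G_1t^{-\bm H}=\int_1^t x^{-1}\,d\bm N(x)\,(x/t)^{\bm H}$.

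\smallskip\noindent\textit{Step 3: error terms, discretisation, assembly.} The remainder sum is treated by Abel summation as in (\ref{eqproofth1.2}): it equals $\bm\sigma_n/n+\sum_{m=1}^{n-1}\bm\sigma_m\frac{\bm I_d-\bm H}{m(m+1)}\widetilde{\bm\Pi}_{m+1}^n$ with $\bm\sigma_m=\sum_{k\le m}\bm\rho_k=o(m^{1/2-\tau})$, and inserting the bound on $\|\widetilde{\bm\Pi}_{m+1}^n\|$ gives $o(n^{-1/2-\tau'})$ when $\rho>1/2$ and $o(n^{-1/2})$ when $\rho=1/2$. The term $(\bm\theta_0-\bm\theta^{\ast})\widetilde{\bm\Pi}_0^n$ is $O(n^{-\rho}\log^{\nu-1}n)$, of the same order as $\bm G_1t^{-\bm H}$, hence absorbable into the error. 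For the martingale term, Abel summation turns $\sum_m\frac{\Delta\bm M_m}{m}\widetilde{\bm\Pi}_m^n$ into $\frac{\bm M_n}{n}+\sum_{m=1}^{n-1}\bm M_m\frac{\bm I_d-\bm H}{m(m+1)}\widetilde{\bm\Pi}_{m+1}^n$; replacing $\bm M_m$ by $\bm N(m)$ costs, by Step 2 and the same computation, $o(n^{-1/2-\tau'})$ (resp.\ $o(n^{-1/2})$); reversing the Abel step gives $\sum_m\frac{\Delta\bm N(m)}{m}\widetilde{\bm\Pi}_m^n=\bigl(\sum_m\frac{\Delta\bm N(m)}{m}(\widetilde{\bm\Pi}_0^m)^{-1}\bigr)\widetilde{\bm\Pi}_0^n$. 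Using the refined estimate $\widetilde{\bm\Pi}_0^m\,m^{\bm H}\to\bm A$ for an invertible matrix $\bm A$ at rate $O(m^{-1}\log^{\nu-1}m)$ (a byproduct of Proposition \ref{lemma1}), the a.s.\ bound $\|\bm N(m)-\bm N(m-1)\|=O(\sqrt{\log m})$, and the fact that $x\mapsto x^{-1}(x/n)^{\bm H}$ has relative oscillation $O(m^{-1})$ on $[m-1,m]$, one replaces this sum by $\int_1^n x^{-1}\,d\bm N(x)\,(x/n)^{\bm H}$ with an error that, after multiplication by $n^{-\bm H}$, is again $o(n^{-1/2-\tau'})$ for $\rho>1/2$ and $O(n^{-1/2}\log^{\nu-1}n)$ for $\rho=1/2$ (the per-step Riemann--Stieltjes errors are $O(m^{\rho-2}\log^{\nu-1/2}m)$ and sum to $O(1)$). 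Collecting all pieces yields (\ref{eqThG.4}) and (\ref{eqThG.5}) with $\bm G(t)$ the solution of (\ref{eqSDE}).

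\smallskip\noindent\textit{Main obstacle.} The crux is Step 2: securing a multidimensional martingale strong invariance principle with the \emph{polynomial} rate $o(n^{1/2-\tau})$ — this is precisely why (\ref{eqThG.2}) (a truncated conditional second-moment condition) and (\ref{eqThG.3}) (a rate for the quadratic variation) must replace the bare Lindeberg condition of Section 2, and why one has to control the deviation of the random clock $\langle\bm M\rangle_n=n\bm\Gamma+o(n^{1-\epsilon_0})$ from the linear clock of $\bm N$. A secondary difficulty is the bookkeeping in Step 3 that keeps the logarithmic exponent exactly $\nu-1$ in the critical case $\rho=1/2$; this rests on the sharp forms of the estimates for $\widetilde{\bm\Pi}_m^n$ and for $\widetilde{\bm\Pi}_0^m m^{\bm H}$ coming from the Appendix.
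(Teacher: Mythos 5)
Your proposal is correct in substance and shares the two decisive ingredients with the paper's proof: the a priori rate $\bm\theta_n-\bm\theta^{\ast}=o(n^{-1/2+\delta})$ used to linearise $\bm h$, and the strong invariance principle of Zhang (2004) giving $\bm M_n=\bm B(n)\bm\Gamma^{1/2}+o(n^{1/2-\tau})$ a.s.\ (this is exactly where (\ref{eqThG.2}) and (\ref{eqThG.3}) enter, as you say). Where you diverge is in the final comparison. The paper does \emph{not} discretise the stochastic integral defining $\bm G$; instead it shows that $\bm G(n)$ itself satisfies the same one-step recursion as $\bm\theta_n-\bm\theta^{\ast}$, namely $\bm G(n+1)-\bm G(n)=-\frac{\bm G(n)}{n+1}\bm H+\frac{[\bm B(n+1)-\bm B(n)]\bm\Gamma^{1/2}+\bm\delta_{n+1}}{n+1}$ with $\sum_m\bm\delta_m$ convergent a.s. Consequently the \emph{difference} $\bm\theta_n-\bm\theta^{\ast}-\bm G(n)$ obeys the homogeneous linear recursion driven only by a forcing term whose partial sums are $o(n^{1/2-\tau})$, and a single application of the Abel-summation estimate with $\|\widetilde{\bm\Pi}_m^n\|\le C(n/m)^{-\rho}\log^{\nu-1}(n/m)$ finishes the proof. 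This buys a much lighter argument: all the non-commutative bookkeeping is confined to the one already-proved bound on $\widetilde{\bm\Pi}_m^n$, and no pathwise modulus of continuity of $\bm B$ or rate for $\widetilde{\bm\Pi}_0^m m^{\bm H}\to\bm A$ is needed.

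One caveat on your Step 3 as written. The per-step Riemann--Stieltjes error in comparing $\sum_m\frac{\Delta\bm N(m)}{m}(\widetilde{\bm\Pi}_0^m)^{-1}$ with $\int_1^n x^{-1}x^{\bm H}\,d\bm N(x)\,\bm A^{-1}$ is governed by the oscillation of $x^{-1}x^{\bm H}$, whose size in the Jordan block of $\lambda_a$ is of order $m^{Re(\lambda_a)-2}$ up to logarithms --- not $m^{\rho-2}$ uniformly. For blocks with $Re(\lambda_a)\ge 1$ these errors do \emph{not} sum to $O(1)$; they grow like $n^{Re(\lambda_a)-1}$ (times logarithms) and are only brought back below $n^{-1/2-\tau}$ after multiplication by the matching contraction $n^{-Re(\lambda_a)}$ from $\widetilde{\bm\Pi}_0^n$. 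So the accounting must be done block by block in the Jordan decomposition rather than with global operator norms and the single exponent $\rho$; a crude bound $\|\sum(\text{errors})\|\cdot\|n^{-\bm H}\|$ can fail when $\max_a Re(\lambda_a)>\rho+1/2$. This is repairable and does not change the conclusion, but as stated that sentence of your proof is not correct. Adopting the paper's device of absorbing the discretisation error into a summable sequence $\bm\delta_{n+1}$ inside the recursion avoids the issue entirely.
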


\begin{remark} The proof of Gaussian approximation is based on the strong approximation theorems for multivariate martingales. The condition that $\bm \Gamma$   is  $\mathscr{F}_m$-measurable for some $m$ is given by Eberlein (1986), Monrad and Philipp (1991) and Zhang (2004) to establish   the strong approximation theorems for multivariate martingales. For general random $\bm \Gamma$, the strong approximation is unknown. In practice, $\bm \Gamma$ is usually assumed to be deterministic.
\end{remark}

\begin{proof} Note conditions (\ref{eqThG.2}) and (\ref{eqThG.3}). By Theorem 1.3 of Zhang (2004), possibly in an enlarged
probability space with the process $\{(\bm\theta_n, \bm M_n,\bm r_n); n\ge 1\}$
redefined without changing its distribution, there is   $d-$dimensional standard Brownian
motions $\bm B(t)$ independent of $\bm\Gamma$, such that
\begin{equation}\label{eqproofThG.1}
\bm M_n=\bm B(n)\bm\Gamma^{1/2}+o(n^{1/2-\tau})\; a.s. \; \text{ for some }\; \tau>0.
\end{equation}
Let $\bm G(t)$ be the solution of equation (\ref{eqSDE}). By some elementary calculation we can write
\begin{align*}
\bm G(n+1)-\bm G(n)=& -\int_n^{n+1} \frac{\bm G(x)}{x} dx \bm H+\int_n^{n+1}\frac{ d \bm B(x)}{x} \bm \Gamma^{1/2}\\
=& -\frac{\bm G(n)}{n+1}\bm H+\frac{ [\bm B(n+1)-\bm B(n)]\bm\Gamma^{1/2}+\bm\delta_{n+1}}{n+1}
\end{align*}
with
\begin{equation}\label{eqproofThG.2} \sum_{m=1}^{\infty}\bm \delta_m \; \text{ being convergent } a.s.
\end{equation}
According to (\ref{eqModel}), we have
$$ \bm \theta_{n+1}-\bm\theta^{\ast}=(\bm\theta_n-\bm\theta^{\ast})\Big(\bm I_d-\frac{\bm H}{n+1}\Big)+
\frac{\Delta \bm M_{n+1}+[(\bm \theta_n-\bm\theta^{\ast})\bm H -\bm h(\bm\theta_n)]+\bm r_{n+1}}{n+1}. $$
It follows that the sequence $\{\bm\theta_n-\bm \theta^{\ast}-\bm G(n)\}$ satisfies
$$ \bm \theta_{n+1}-\bm\theta^{\ast}-\bm G(n+1)=(\bm\theta_n-\bm\theta^{\ast}-\bm G(n))\Big(\bm I_d-\frac{\bm H}{n+1}\Big)+
\frac{\bm r_{n+1}^{\ast}}{n+1} $$
with
$$ \bm r_{n+1}^{\ast}=\bm r_{n+1}-\bm\delta_{n+1}+\left[(\bm\theta_n-\bm\theta^{\ast})\bm H-\bm h(\bm\theta_n)\right]+\Delta\big(\bm M_{n+1}-\bm B(n+1)\bm\Gamma^{1/2}\big). $$
According to (\ref{eqproofth3.5}),
\begin{equation}\label{eqproofThG.3} (\bm\theta_n-\bm\theta^{\ast})\bm H-\bm h(\bm\theta_n)
= o(\|\bm\theta_n-\bm\theta^{\ast}\|^{1+\epsilon})=o(n^{-1/2-\epsilon/4}) \; a.s.
\end{equation}
 From (\ref{eqThG.1}), (\ref{eqproofThG.1}), (\ref{eqproofThG.2}) and (\ref{eqproofThG.3}), it follows that
$$ \bm s_n^{\ast}=:\sum_{m=1}^n \bm r_m^{\ast}=o(n^{1/2-\tau})\; a.s. \; \text{ for some }\; \tau>0. $$
   Recall that $\widetilde{\bm \Pi}_m^n=\sum_{j=m+1}^n \big(\bm I_d-\frac{\bm H}{j}\big)$ and $\|\widetilde{\bm \Pi}_m^n\|\le C_0\big(\frac{n}{m}\big)^{-\rho}\log^{\nu-1}\frac{n}{m}$  by Proposition \ref{lemma1}(i). Following the lines in (\ref{eqproofth1.1}) and (\ref{eqproofth1.2}), we conclude that
\begin{align*}   \bm\theta_n-&\bm\theta^{\ast} -\bm G(n)=(\bm\theta_0-\bm\theta^{\ast})\widetilde{\bm \Pi}_0^n+\frac{\bm s_n^{\ast}}{n}\widetilde{\bm \Pi}_n^n+\sum_{m=1}^{n-1}\bm s_m^{\ast}\frac{\bm I_d-\bm H}{m(m+1)}\widetilde{\bm \Pi}_{m+1}^n \\
=&O(1) n^{-\rho}\log^{\nu-1}n+\frac{o(n^{1/2-\tau})}{n} +\sum_{m=1}^{n-1}\frac{o(m^{1/2-\tau})}{m(m+1)}\big(\frac{n}{m}\big)^{-\rho}\log^{\nu-1}n \\
=&O\left(n^{-(1/2+\tau)\wedge \rho}\log^{\nu-1}n\right)\;\; a.s.
\end{align*}
The proof is complete.
\end{proof}

\section{Urn models}\label{sectionUrn}
\setcounter{equation}{0}
Urn models have long been considered
powerful mathematical instruments in many areas, including the physical
sciences, biological sciences, social sciences and engineering (Johnson and
Kotz, 1977; Kotz and Balakrishnan, 1997). The P\'olya urn
(also known as the P\'olya-Eggenberger urn) model was originally proposed to
model the problem of contagious diseases (Eggenberger and P\'olya, 1923). Since
then, there have been numerous generalizations and extensions. Among
them, the GFU (also known as
the generalized P\'olya urn or GPU in the literature) is the most popular (see  Athreya and Karlin, 1968; Athreya and Ney, 1972; Janson, 2004; etc.).
In clinical trial studies, response-adaptive designs for randomizing treatments
to patients aim at detecting "on-line" which
treatment should be assigned to more patients while retaining enough randomness to preserve the
basis of treatments. A large family of adaptive designs is based on the GFU (Wei and Durham, 1978; Wei, 1979; Smythe, 1996; Bai
and Hu, 1999, 2005;  Hu and Zhang, 2004; Hu and Rosenberger, 2006; Zhang, Hu and Cheung, 2006; Zhang et al., 2011; etc.). In this model, the adaptive approach relies on the cumulative information provided by the
responses to previous patients' treatments to adjust treatment allocation to the new
patients.
The idea
of this modeling is that the urn contains balls of $d$ different types representative of the treatments. At the beginning, the urn contains
$\bm Y_0=(Y_{0,1}, \cdots, Y_{0,d})\in \mathbb{R}^d\backslash \{\bm 0\}$ balls, where $Y_{0k}$
denotes the number of balls of type $k$, $k=1,\cdots, d$. At
stage $m$ ( $m=1,2,\cdots$), a ball is drawn from the urn with instant replacement. If the ball is of type $k$, then the $m$-th patient is allocated to treatment $k$, and additional $D_{k,q}(m)$ balls of type
$q$, $q=1,\cdots, d$, are added to the urn, where $D_{k,q}(m)$ may be a
function of another random variable $\bm \xi(m)$ and also may be a function of urn
compositions and the results of draws from previous stages. The random vector $\bm \xi(m)$ is usually the response of the $m$-th patient. This
procedure is repeated throughout $n$ stages. After $n$ draws and
generations, the urn composition is denoted by the row vector $\bm
Y_n=(Y_{n,1}, \cdots, Y_{n,d})$, where $Y_{n,k}$ is the number
of balls of type $k$ in the urn after the $n$th draw.
This
relation can be written as the following recursive formula:
\begin{equation}\label{eq1.1}
 \bm Y_n = \bm Y_{n-1}+\bm X_n \bm D_n, \end{equation}
 where $\bm D_n=\big(D_{k,q}(n)\big)_{k,q=1}^d $ and $\bm X_n$ is the result
of the $n$th draw, distributed according to the urn composition at
the previous stage, i.e., if the $n$th draw is a type $k$ ball,
then the $k$th component of $\bm X_n$ is $1$ and other components
are $0$. The matrices¡¯ $\bm D_n$s¡¯ are named as the adding
rules. The conditional expectations
$\bm H_n=\big(\ep[D_{k,q}(n)\big|\mathscr{F}_{n-1}]\big)_{k,q=1}^d $,
for given the history sigma field $\mathscr{F}_{n-1}$ generated by
the urn compositions  $\bm Y_1,\cdots, \bm Y_{n-1}$, the results of draws
$\bm X_1,\cdots, \bm X_{n-1}$ and   $\bm \xi(1),\cdots,
\bm \xi(n-1)$ of all previous stages, $n=1,2,\cdots$, are named as the
generating matrices.
 When $\bm
D_n$, $n=1,2,\cdots, $ are independent and identically distributed,
the GFU model is usually said to be homogeneous. In such a case,
$\bm H_n=\bm H$ are identical and nonrandom and the
adding rule $\bm D_n$ is merely a function of the $\bm \xi(n)$. In the general heterogeneous cases, both $\bm
D_n$ and $\bm H_n$ depend on the entire history of all of the stages.

Write $\bm N_n=(N_{n,1},\cdots, N_{n,d})$, where
$N_{n,k}$ is the number of times that a type $k$ ball is drawn in the
first $n$ stages. Also, in an adaptive design based on this urn model, $N_{n,k}$ is the number of patients being assigned to treatment $k$ after $n$ assignments.
Obviously,
\begin{equation}\label{eq1.2}
 \bm N_n=\sum_{k=1}^n \bm X_k. \end{equation}
    Athreya and Karlin (1967, 1968) first considered the asymptotic properties of
the homogeneous  GFU model  and conjectured that
$\bm N_n$ is asymptotically normal. Janson (2004) established the functional CLTs of $\bm Y_n$ and $\bm N_n$ for a  homogenous case in which the numbers of each type of balls were assumed to be integers.
   Bai and
Hu (2005) established   the asymptotic normality
for the  non-homogeneous GFU model under the following conditions:
\begin{align}
&\bm H_n\to \bm H\;\; a.s.,\;\; \bm H=(H_{k,j})_{d\times d},\; \; H_{k,j}\ge 0,   \label{CondBaiHu0} \\
&\sup\limits_{n\ge 1} \ep\left[\|\bm D_{n}\|^{2+\epsilon}\big|\mathscr{F}_{n-1}\right]<+\infty\;  a.s. \; \text{ for some } \epsilon>0, \label{CondBaiHu1} \\
&  \Cov\big[\{D_{q,k}(n), D_{q,l}(n)\}\big|\mathscr{F}_{n-1}\big]\to V_{qkl}\; a.s., \quad q,k,l=1,\cdots d,   \label{CondBaiHu2}
\\
&  \sum\limits_{m=1}^{\infty} \frac{\|\bm H_m-\bm H\|_{\infty}}{\sqrt{m}}<\infty \; a.s.,   \label{CondBaiHu3}\\
&  n\ep \|\bm H_n-\ep \bm H_n\|^2\to 0  \; a.s.,  \label{CondBaiHu3ad}\\
&   \bm H_n\bm 1^{\rm t}= \alpha\bm 1^{\rm t}\;\;\text{ with } \bm 1=(1,\cdots, 1)  \text{ for some } \alpha>0, \label{CondBaiHu4}
\end{align}
 and $\lambda_{sec}\le \alpha/2$,  where $\lambda_{sec}$ is the second largest real part of the eigenvalues of $\bm H$.    Higueras et al.  (2006) also considered the asymptotic normality of the urn compositions $\bm Y_n$ under the condition that
\begin{equation}\label{CondLarPages1}
n\ep\|\bm H_n-\bm H\|^2\to 0,
\end{equation}
which is weaker than  Bai and Hu's conditions  (\ref{CondBaiHu3}) and (\ref{CondBaiHu3ad}). However, Higueras et al.  (2006) only considered the case $\lambda_{sec}<\alpha/2$ and assumed an extra assumption; namely, that $\bm D_n\bm 1^{\prime}=\alpha\bm 1^{\prime}$, which is stricter than (\ref{CondBaiHu4}).

Laruelle  and Pag\`es (2013) derived  the joint asymptotic distribution of the vector $(\bm Y_n,\bm N_n)$  and weakened conditions  (\ref{CondBaiHu3}) and (\ref{CondBaiHu3ad}) to (\ref{CondLarPages1}). Moreover, the results only held when $\lambda_{sec}<\alpha/2$. In the study of adaptive designs driven by urn models,  $\lambda_{sec}\le  \alpha/2$ is a very stringent condition even when $d=3$ (cf. Chapter 4 of Hu and Rosenberger (2006)). The limit properties for $\lambda_{sec}>\alpha/2$ and for the case that (\ref{CondBaiHu4}) is not satisfied are stated as open problems in Hu and Rosenberger (2006, p. 158).
  In this section, we derive the joint asymptotic distribution of $(\bm Y_n, \bm N_n)$  by applying our new results on the SA algorithm (\ref{eqModel}). We consider  both the cases of $\lambda_{sec}\le \alpha/2$ and $\lambda_{sec}>\alpha/2$. We also remove condition (\ref{CondBaiHu4}) and weaken condition (\ref{CondBaiHu1}) to the  conditional Lindeberg condition.

Before we state the results, we first need some more notations and assumptions. To include various cases, we allow the numbers of balls to be non-integers and negative. For example, $D_{k,l}(n)<0$ means that $|D_{k,l}(n)|$ balls of type $l$ are removed from the urn when a ball of type of $k$ is drawn. We assume that a type of ball with a negative number  will never be selected and so,
$$ \pr(X_{n,k}=1|\mathscr{F}_{n-1})=\frac{ Y_{n-1,k}^+}{|\bm Y_{n-1}^+|}. $$
Here, $Y_{n,k}^+=\max\{Y_{n,k},0\}$ is the positive part of $Y_{n,k}$, $\bm Y_n^+=(Y_{n,1}^+,\cdots, Y_{n,d}^+)$, $|\bm Y_n^+|=\sum_{k=1}^d  Y_{n,k}^+$ and $\frac{0}{0}$ is defined as $\frac{1}{d}$, which means that each type of ball is selected with equal probability when the urn has no balls with a positive number. In this general framework, the urn allows negative and/or non-integer numbers of balls, removal and non-homogeneous updating. In considering the asymptotic properties, we need two assumptions on the adding rules.

\begin{assumption} \label{assumption4.1} Suppose that there is a  deterministic matrix $\bm H=(H_{q,k})_{q,k}^d$  with $H_{q,k}\ge 0$ for $q\ne k$
   such that
\begin{equation}\label{eqassumption4.1.2}
\sum_{m=1}^n \|\bm H_m-\bm H\|=o(n) \;\; a.s.
\end{equation}
 Further, assume that $\bm H$  has  a single  largest eigenvalue $\alpha>0$ and the
corresponding  left eigenvector $\bm v=(v_1,\cdots,v_d)$ and right
eigenvector $\bm u^{\rm t}=(u_1,\cdots, u_d)^{\rm t}$ such that $\sum_k
v_k=\sum_k v_k u_k=1$ and $ v_k>0, u_k>0$, $k=1,\cdots, d$.

 Without loss of generality, we assume that $\alpha=1$
throughout this paper. Otherwise, we may consider $\bm Y_m/\alpha$,
$\bm D_m/\alpha$ instead.
\end{assumption}
Assumption \ref{assumption4.1} means that  the updating is asymptotically stable and that on average, a draw will not generate the removal of the undrawn balls to avoid urn extinction, although balls of any type can be dropped from the urn at each specific stage.

When $\bm H$ satisfies the conditions in Assumption \ref{assumption4.1}, we let   $\lambda_2,\cdots,\lambda_s$ be the other eigenvalues of $\bm H$ and suppose  that $\bm H$ has the following Jordan canonical form
 decomposition
\begin{equation}\label{eq1.7}
 diag\left(
      1 , \bm
      J_2,\cdots,\bm J_s\right)
 \end{equation}
with $\bm J_t= \lambda_t \bm I_{\nu_t}+\overline{\bm J}_{\nu_t}$,
where $\nu_t$ is  the order of the Jordan block $\bm J_t$. Denote by $\lambda_{sec}=\max\{Re(\lambda_2),\cdots,Re(\lambda_s)\}$ and   $\nu=\max\{\nu_t:
Re(\lambda_t)=\lambda_{sec}\}$.

\begin{assumption} \label{assumption4.2}   Let
$ V_{qkl}(n)=:\Cov\big[\{D_{qk}(n), D_{ql}(n)\}\big|\mathscr{F}_{n-1}\big]$, $q,k,l=1,\cdots d $,
and denote by $\bm V_{nq}= (V_{qkl}(n))_{k,l=1}^d$.
Suppose that
\begin{equation}\label{eq4.2.1} \frac{1}{n}\sum_{m=1}^n \ep \left[\|\bm D_m\|^2I\{\|\bm D_m\| \ge \epsilon \sqrt{n}\} \big |\mathscr{F}_{m-1}\right] \to 0\;\; a.s.\; \text{ or in } L_1,\;\; \forall \epsilon>0,  \end{equation}
\begin{equation}\label{eq4.2.2} \frac{1}{n} \sum_{m=1}^n \bm V_{mq}\to \bm V_q\;a.s.\; \text{ or in } L_1\;\; \text{ for all }\;
q=1,\cdots,d,
\end{equation}
where $\bm V_q=(V_{qkl})_{k,l=1}^d$, $q=1,\cdots,d$, are  $d\times
d$ symmetric positive semidefinite   random matrices.
\end{assumption}

\subsection{Convergence Results}

An important step in showing the a.s. convergence of $\bm Y_n/n$ and $\bm N_n/n$ in Bai and Hu (1999, 2005) and Laruelle  and Pag\`es (2013) is  the convergence of $\frac{\sum_{k=1}^dY_{n,k}}{n}$, which is  derived from  the observation that
$$\left\{\sum_{k=1}^d(\Delta Y_{n,k}-\ep[\Delta Y_{n,k}|\mathscr{F}_{n-1}]); n\ge 1\right\}$$
is a martingale difference sequence where $\Delta Y_{n,k}=Y_{n,k}-Y_{n-1,k}$ and
$$ \sum_{k=1}^d\ep[\Delta Y_{n,k}|\mathscr{F}_{n-1}]=  \ep[\bm X_n\bm D_n\bm 1^t|\mathscr{F}_{n-1}]=\frac{\bm Y_{n-1}^+}{|\bm Y_{n-1}^+|}\bm H_n\bm 1^{\rm t}=\alpha. $$
The last equality above  is due to condition (\ref{CondBaiHu4}). When (\ref{CondBaiHu4}) is not satisfied, there is not an easy way to directly show the convergence of $\frac{\sum_{k=1}^dY_{n,k}}{n}$. Next, we modify the ODE method proposed by Laruelle  and Pag\`es (2013) to prove the convergence of $\bm Y_n/n$ and $\bm N_n/n$. The following theorem is the main result followed by its proof. Some of the basic tools in the ODE method that we used in the proof are presented in the Appendix.

\begin{theorem}\label{theorem4.1} Suppose that
 \begin{equation}\label{eqth4.1.0}   \sum_{m=1}^n \bm V_{mq}=O(n)\;a.s.\; \text{ or in } L_1\;\; \text{ for all }\;
q=1,\cdots,d,
\end{equation}
and Assumption \ref{assumption4.1} is  satisfied with $\lambda_{sec}<1$. Then,
\begin{equation}\label{eqth4.1} \frac{\bm Y_n}{n}\to \bm v\;a.s. \;\;\text{ and }\;\; \frac{\bm N_n}{n}\to \bm v.
\end{equation}
\end{theorem}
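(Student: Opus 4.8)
The plan is to recast the urn recursion \eqref{eq1.1} in the form of the stochastic approximation algorithm \eqref{eqModel} and then invoke the ODE method, whose basic tools are referenced as being collected in the Appendix. First I would work with the normalized composition $\bm\theta_n=\bm Y_n/n$. Writing $\Delta\bm Y_{n+1}=\bm X_{n+1}\bm D_{n+1}$ and using $\bm Y_{n+1}/(n+1)=\bm Y_n/(n+1)+\Delta\bm Y_{n+1}/(n+1)$, one gets
\[
\bm\theta_{n+1}=\bm\theta_n-\frac{\bm\theta_n}{n+1}+\frac{\bm X_{n+1}\bm D_{n+1}}{n+1}
=\bm\theta_n-\frac{\bm\theta_n-\bm\theta_n\,\widehat{\bm H}_{n+1}}{n+1}+\frac{\Delta\bm M_{n+1}}{n+1},
\]
where $\widehat{\bm H}_{n+1}$ is a matrix built from the drawing probabilities $\bm X_{n+1}$ and the conditional generating matrix $\bm H_{n+1}$, $\Delta\bm M_{n+1}=\bm X_{n+1}\bm D_{n+1}-\ep[\bm X_{n+1}\bm D_{n+1}\,|\,\mathscr F_n]$ is a martingale difference, and the remainder absorbs the discrepancy between $\bm H_{n+1}$ and its limit $\bm H$ together with the difference between $\bm Y_{n-1}^+/|\bm Y_{n-1}^+|$ and $\bm\theta_n$. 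The regression function is then $\bm h(\bm\theta)=\bm\theta-\bm\theta\,\bm p(\bm\theta)\bm H$, where $\bm p(\bm\theta)=\bm\theta^+/|\bm\theta^+|$ on the relevant region; note that, unlike in Laruelle and Pag\`es (2013), this $\bm h$ is \emph{not} linear, because $\bm p(\cdot)$ is nonlinear and because $\bm H\bm 1^{\rm t}$ need not be constant (condition \eqref{CondBaiHu4} is dropped).

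Second, I would verify the hypotheses needed for the ODE method. The noise terms $\Delta\bm M_{n+1}$ satisfy $\frac1n\sum_{m\le n}\ep[\|\Delta\bm M_m\|^2\,|\,\mathscr F_{m-1}]=O(1)$ a.s.\ by \eqref{eqth4.1.0} and the trivial bound $\|\bm X_m\|\le 1$, so the martingale part, after averaging with step size $1/n$, is negligible in the sense required for ODE convergence; the remainder from \eqref{eqassumption4.1.2} contributes $o(1)$ on average. One also needs an a priori boundedness (or relative compactness) of $\{\bm Y_n/n\}$: since $\bm D_m\bm 1^{\rm t}$ has conditional mean $\bm p(\bm\theta_{m-1})\bm H_m\bm 1^{\rm t}$, which is bounded because $\bm H_m\to\bm H$ and $H_{q,k}\ge 0$ for $q\ne k$ (Assumption \ref{assumption4.1} prevents net removal of undrawn balls), $|\bm Y_n|/n$ stays bounded and bounded away from $0$ a.s. This confines the trajectory to a compact set on which $\bm p$ is smooth.

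Third, I would analyze the limiting ODE $\dot{\bm x}(t)=-\bm h(\bm x(t))=-\bm x(t)+\bm x(t)\bm p(\bm x(t))\bm H$. The ODE method then guarantees that every limit point of $\bm Y_n/n$ lies in the set of equilibria (or, more precisely, in a chain-recurrent set) of this ODE. So the crux is to show that the unique stable equilibrium in the relevant simplex-like region is $\bm v$, the Perron left eigenvector, and that all trajectories converge to it. I would decompose along the Jordan structure \eqref{eq1.7}: projecting onto $\bm u^{\rm t}$ gives $\frac{d}{dt}(\bm x(t)\bm u^{\rm t})=-\bm x(t)\bm u^{\rm t}+\bm x(t)\bm p(\bm x(t))\bm H\bm u^{\rm t}=-\bm x\bm u^{\rm t}+\bm x\bm p(\bm x)\bm 1^{\rm t}\cdot(\text{scalar})$; more usefully, writing $\bm x=c\,\bm v+\bm w$ with $\bm w$ in the complementary invariant subspace, the $\bm w$-component contracts because $\lambda_{sec}<1=\alpha$, forcing $\bm p(\bm x)\to\bm p(\bm v)=\bm v$ (since $\bm v^+=\bm v$ and $|\bm v|=\sum v_k=1$), whence the $c$-component satisfies $\dot c\approx -c+c$, i.e.\ $c$ is asymptotically pinned by the overall mass, and the mass $|\bm Y_n|/n$ converges to $\bm v\bm H\bm 1^{\rm t}$ times a constant — one checks this fixed point is exactly $\bm v$. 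Once $\bm Y_n/n\to\bm v$ a.s.\ is established, $\bm N_n/n\to\bm v$ follows since $\ep[\bm X_{m+1}\,|\,\mathscr F_m]=\bm Y_m^+/|\bm Y_m^+|\to\bm v$ and $\bm N_n=\sum_{m\le n}\bm X_m$ is a martingale plus this convergent conditional mean, so a Cesàro/martingale SLLN argument finishes it.

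The main obstacle is the third step: because $\bm H\bm 1^{\rm t}$ is no longer a constant multiple of $\bm 1^{\rm t}$, the total number of balls $|\bm Y_n|$ is not a simple martingale-plus-drift with deterministic drift, so the classical shortcut of Bai and Hu (1999, 2005) for the convergence of $\frac1n\sum_k Y_{n,k}$ is unavailable. The resolution, as the excerpt indicates, is that the ODE $\dot{\bm x}=-\bm h(\bm x)$ — although nonlinear — still has the Perron direction $\bm v$ as its globally attracting equilibrium on the cone of nonnegative vectors with bounded mass, and this must be proved by a Lyapunov-type argument exploiting the strict spectral gap $\lambda_{sec}<1$ together with the positivity of the off-diagonal entries of $\bm H$ (which keeps $\bm x^+/|\bm x^+|$ from degenerating along trajectories). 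Handling the possibility of components hitting zero (where $\bm p$ is only Lipschitz, not $C^1$) requires a little care but does not affect the location of the attractor.
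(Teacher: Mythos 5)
Your overall strategy --- recasting $\bm Y_n/n$ as the SA recursion with regression $\bm h(\bm\theta)=\bm\theta-\frac{\bm\theta^+}{|\bm\theta^+|}\bm H$ and invoking the Kushner--Clark ODE method --- is exactly the paper's. But the proposal stops short at precisely the two points where the work is, so there are genuine gaps.

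First, the ODE analysis. Your own formula $\dot c\approx -c+c$ is the symptom of the difficulty rather than its resolution: since $\bm v\bm H=\bm v$, the Perron component has no linear contraction, and the spectral gap $\lambda_{sec}<1$ alone only controls the direction of $\bm\theta(s)$, not the total mass; saying ``one checks this fixed point is exactly $\bm v$'' is the assertion to be proved. The paper (Theorem \ref{thODE1} in the Appendix) resolves this by introducing the time change $f(s)=\int_0^s|\bm\theta(u)|^{-1}\,du$ and solving the two projected equations explicitly, $\bm\theta(s)\bm u^{\rm t}=\bm\theta_0\bm u^{\rm t}e^{-(s-f(s))}$ and $\bm\theta(s)(\bm I_d-\bm u^{\rm t}\bm v)=\bm\theta_0(\bm I_d-\bm u^{\rm t}\bm v)e^{-(s-f(s))}e^{-f(s)(\bm I_d-\widetilde{\bm H})}$ with $\widetilde{\bm H}=\bm H-\bm u^{\rm t}\bm v$; from these one first gets $\bm\theta(s)/(\bm\theta(s)\bm u^{\rm t})\to\bm v$, hence $|\bm\theta(s)|/(\bm\theta(s)\bm u^{\rm t})\to1$, and only then does the scalar equation $\frac{d}{ds}(\bm\theta\bm u^{\rm t})=-(\bm\theta\bm u^{\rm t}-1)+c(s)$ with $c(s)\to0$ pin down $\bm\theta\bm u^{\rm t}\to1$. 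Moreover, Kushner--Clark part (b) requires the uniform stability property (iii) of a region of attraction, not merely that every trajectory converges; this needs the uniform bound $\sup_{x\ge0}\|e^{-x(\bm I_d-\widetilde{\bm H})}\|<\infty$ and an argument that small perturbations of $\bm v$ stay small for all time, which you do not address. Without these steps the claim that $\bm v$ is the attractor on $\{\bm\theta:\bm\theta\bm u^{\rm t}>0\}$ is unproved.

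Second, the discrete-level issue created by removals. The drift of the recursion is driven by $\bm Y_n^+/|\bm Y_n^+|$ while the quantity of interest is $\bm Y_n/n$; since balls can be removed these differ, and one must prove $\bm Y_n^+-\bm Y_n=o(n)$ a.s.\ before the ODE method can be applied to $\bm Y_n^+/n$ and its conclusion transferred to $\bm Y_n/n$. The paper does this by fixing a type $k$, looking at the last time $l_n\le n$ with $Y_{l_n,k}\ge0$, and using $H_{q,k}\ge0$ for $q\ne k$ together with the fact that a type with negative count is never drawn, to conclude $\liminf_n Y_{n,k}/n\ge0$. Your remark that the nonsmoothness of $\bm\theta\mapsto\bm\theta^+/|\bm\theta^+|$ ``requires a little care'' misplaces the problem: it is not a regularity issue for the ODE but a separate estimate on the stochastic sequence. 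Likewise, the a priori bound $\liminf_n\bm Y_n\bm u^{\rm t}/n\ge\min_k u_k>0$, which is what keeps the limit set $\Theta^\infty$ inside the region of attraction, should be derived from the $\bm u$-projection of the averaged recursion; you assert boundedness away from zero but do not derive it.
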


\begin{remark} It is easily seen that (\ref{eqth4.1.0}) is implied by either $\sup_m\ep\big[\|\bm D_m\|^2\big|\mathscr{F}_{m-1}\big]<\infty$ a.s.
or $\sup_m\ep \|\bm D_m\|^2<\infty$.
\end{remark}

\begin{proof} To prove this theorem, we note that $\bm Y_{n+1}=\bm Y_n+\bm X_{n+1}\bm D_{n+1}$ and $\ep[\bm X_{n+1}|\mathscr{F}_n]=\frac{\bm Y_n^+}{|\bm Y_n^+|}$.  Let $\Delta \bm M_{n,1}= \bm X_n-\ep[\bm X_n|\mathscr{F}_{n-1}]$ and  $\Delta \bm M_{n,2}=\bm X_n(\bm D_n-\ep[ \bm D_n|\mathscr{F}_{n-1}])$.  We have
\begin{align}\label{eqproofth4.1}
 \bm Y_{n+1}
 =&\bm Y_n+ \frac{\bm Y_n^+}{|\bm Y_n^+|} \bm H+\Delta \bm M_{n+1,1}\bm H +\Delta \bm M_{n+1,2}
  +\bm X_{n+1}(\bm H_{n+1}-\bm H).
  \end{align}
 Under assumption (\ref{eqassumption4.1.2}), we have $\sum_{m=1}^n\bm X_m(\bm H_m-\bm H)=o(n)$ a.s. It can be verified that    $\bm M_{n,1}\bm H +  \bm M_{n,2}=o(n)$ a.s. by assumption (\ref{eqth4.1.0}). Thus,
\begin{align*} \bm Y_n =&\bm Y_0+\sum_{m=0}^{n-1}\frac{\bm Y_m^+}{|\bm Y_m^+|} \bm H+(\bm M_{n,1}\bm H +  \bm M_{n,2})+\sum_{m=1}^n\bm X_m(\bm H_m-\bm H)\\
=&\sum_{m=0}^{n-1}\frac{\bm Y_m^+}{|\bm Y_m^+|} \bm H+o(n)\; a.s.
\end{align*}
It follows that
$$ \limsup_{n\to \infty} \frac{|Y_{n,k}|}{n}\le \limsup_{n\to \infty} \frac{1}{n}\sum_{m=0}^{n-1}\frac{|\sum_{q=1}^d Y_{m,q}^+H_{q,k}|}{|\bm Y_m^+|} \le  \max_{q,k}|H_{q,k}|\;\; a.s. $$
and
$$\liminf_{n\to \infty} \frac{\bm Y_n\bm u^{\rm t}}{n}=\liminf_{n\to \infty} \frac{1}{n}\sum_{m=0}^{n-1}\frac{\bm Y_m^+\bm u^{\rm t}}{|\bm Y_m^+|} \ge
\min_k u_k>0\;\; a.s. $$
Let $\Theta^{\infty}$ be the set of limiting values of $ \frac{\bm Y_n^+}{n}$ as $n\to \infty$. Then,
\begin{equation}\label{eqproofth4.3}\Theta^{\infty}\subset \left\{\bm\theta=(\theta_1,\cdots,\theta_d): \bm \theta\bm u^{\rm t}>0, 0\le \bm\theta_k\le \max_{q,l}|H_{q,l}|, k=1,\cdots,d\right\}.
\end{equation}
 Next, we show that
\begin{equation}\label{eqproofth4.4} \bm Y_n^+-\bm Y_n=o(n)\; a.s.
\end{equation}
Note that $|\bm Y_n^+|\ge c \bm Y_n^+\bm u^{\rm t}\ge c\bm Y_n\bm u^{\rm t}\to\infty$ a.s. as $n\to\infty$. Without loss of generality, we assume that $|\bm Y_n^+|>0$ for all $n$. Then, $X_{m+1,k}=0$ if $Y_{m,k}<0$. For $n$ and $k$, let $l_n=\max\{l\le n: Y_{l,k}\ge 0\}$ be the largest integer for which $Y_{l,k}\ge 0$. Then,
\begin{align*}
 Y_{n,k}=&Y_{l_n,k}+\sum_{m=l_n+1}^n \sum_{q=1}^dX_{m,q}D_{q,k}(m)\\
 =&Y_{l_n,k}+\sum_{m=l_n+1}^n \sum_{q=1}^dX_{m,q}[D_{q,k}(m)-H_{q,k}(m)]\\
 &+\sum_{m=l_n+1}^n \sum_{q=1}^dX_{m,q}[H_{q,k}(m)-H_{q,k}]
 + \sum_{m=l_n+1}^n \sum_{q=1}^dX_{m,q}H_{q,k} \\
 =&Y_{l_n,k} + \sum_{m=l_n+1}^n \sum_{q=1}^dX_{m,q}H_{q,k} +o(n)
 \ge  X_{l_n+1,k} H_{k,k}+o(n) \;\; a.s.
 \end{align*}
because $H_{q,k}\ge 0$ if $q\ne k$ and $X_{m,k}=0$ for $m=l_n+2,\cdots, n$. It follows that $\liminf_{n\to\infty}\frac{Y_{n,k}}{n}\ge 0$ a.s., and then (\ref{eqproofth4.4}) follows.

Now, write $\bm \theta_n=\frac{\bm Y_n^+}{n}$ and
\begin{align*}
\bm r_n = & (\Delta\bm M_{n,1}\bm H +  \Delta\bm M_{n,2})+ \bm X_n(\bm H_n-\bm H)+(\Delta\bm Y_n^+-\Delta\bm Y_n),\\
\bm s_n = &\sum_{m=1}^n \bm r_m= (\bm M_{n,1}\bm H +  \bm M_{n,2})+\sum_{m=1}^n\bm X_m(\bm H_m-\bm H)+(\bm Y_n^+-\bm Y_n)\\
&- \bm M_{0,1}\bm H -   \bm M_{0,2}-(\bm Y_0^+-\bm Y_0).
\end{align*}
 Then, from (\ref{eqproofth4.1}) and (\ref{eqproofth4.4}) we conclude that $\bm\theta_n$ is bounded with a probability of one and satisfies the SA algorithm (\ref{eqModel}) with
 $$ \bm h(\bm \theta)=\bm\theta\Big(\bm I_d-\frac{\bm H}{|\bm \theta|}\Big) \;  \text{ and }\; \frac{\bm s_n}{n}\to \bm 0 \;\; a.s., $$
where $|\bm \theta|=\sum_{k=1}^d |\theta_k|$. It is obvious that $\bm h(\bm \theta)$ is a continuous function on $\{\bm \theta:\bm\theta\bm u^{\rm t}>0\}$.

By Theorem \ref{thODE2} (a) and Remark \ref{remarkA.1}, the set $\Theta^{\infty}$ of the limiting values of $\bm\theta_n$   is a.s. a compact  connected set, stable by the
flow of the ordinary differential equation (ODE):
$$ \dot{\bm \theta}=-\bm h(\bm\theta). $$
It is obvious that $\bm h(\bm v)=0$. By Theorem \ref{thODE1}, $\Theta=:\{\bm \theta:\bm \theta\bm u^{\rm t}>0\}$ is a region of attraction of the above ODE for $\bm v$. Moreover, $\Theta$ is a neighborhood of $\bm v$. Further,  $\Theta^{\infty}\subset \Theta$ by (\ref{eqproofth4.3}). By Theorem \ref{thODE2} (b), we conclude that
  $$ \frac{\bm Y_n^+}{n}=\bm \theta_n  \to \bm v\;\; a.s.  $$
  Accordingly, $\bm Y_n/ n  \to \bm v$  a.s.,  $|\bm Y^+_n|/n \to |\bm v|=1$ a.s.

Finally,
\begin{align}\label{eqproofth4.5}
 \bm N_n=& \bm N_{n-1}+(\bm X_n-\ep[\bm X_n|\mathscr{F}_{n-1}])+\frac{\bm Y_{n-1}^+}{|\bm Y_{n-1}^+|}\nonumber\\
 &=\cdots =  \bm M_{n,1}- \bm M_{0,1}+\sum_{m=0}^{n-1}\frac{\bm Y_m^+}{|\bm Y_m^+|}.
\end{align}
It follows that
$$ \lim_{n\to \infty} \frac{\bm N_n}{n}=\lim_{n\to\infty} \frac{ \bm M_{n,1}}{n}+\lim_{n\to\infty} \frac{1}{n}\sum_{m=0}^{n-1}\frac{\bm Y_m^+/m}{|\bm Y_m^+|/m}=\bm v\;\; a.s. $$
The proof is complete.
\end{proof}

\subsection{Limiting Distribution}

We apply Theorems \ref{theorem2}-\ref{theorem1} to show  the rates of convergence. First,  we  show that the random vector $(\frac{\bm Y_n}{n},\frac{\bm N_n}{n})$ satisfies equation (\ref{eqModel}).  By (\ref{eqproofth4.1}) and (\ref{eqproofth4.5}) we have
\begin{align*}
 \bm Y_{n+1}
 =\bm Y_n+ \frac{\bm Y_n^+}{n} \frac{ \bm H}{|\bm Y_n^+/n|} +\Delta \bm M_{n+1,1}\bm H +\Delta \bm M_{n+1,2}
   +\bm X_{n+1}(\bm H_{n+1}-\bm H)
 \end{align*}
 and
 \begin{align}\label{eqforN}
 \bm N_{n+1}
  =&\bm N_n+\Delta \bm M_{n+1,1}+ \frac{\bm Y_n^+}{n} \frac{\bm I_d}{|\bm Y_n^+/n|} \nonumber\\
 =&\bm N_n+\Delta \bm M_{n+1,1}+\big(\frac{\bm Y_n^+}{n}-\bm v\big)\frac{n}{|\bm Y_n^+|}(\bm I_d-\bm 1^{\rm t}\bm v)+\bm v.
 \end{align}
Write
$\bm \theta_n=(\bm \theta_n^{(1)},\bm\theta_n^{(2)})=\big(\frac{\bm Y_n^+}{n}, \frac{\bm N_n}{n})$, $\bm\theta^{\ast}=(\bm v,\bm v)$,
$$\Delta\bm M_n= \big(\Delta \bm M_{n+1,1}\bm H+\Delta \bm M_{n+1,2},\Delta \bm M_{n+1,1}\big) $$
and
$$ \bm r_{n+1}=\Big(\bm X_{n+1} (\bm H_{n+1}-\bm H)+\Delta(\bm Y_{n+1}^+ -\bm Y_{n+1}), \bm 0\Big). $$
Then, $\bm \theta_n$ satisfies SA algorithm (\ref{eqModel}):
\begin{align}\label{eqproofEq}
 \bm \theta_{n+1}=  \bm \theta_n-\frac{\bm h(\bm\theta_n)}{n+1}+\frac{\Delta \bm M_{n+1}+ \bm r_{n+1}}{n+1},
\end{align}
 with
$$ \bm h(\bm\theta)=\bm \theta\begin{pmatrix} \bm I_d-\frac{\bm H}{|\bm \theta^{(1)}|} & -\frac{\bm I_d}{|\bm \theta^{(1)}|} \\\bm 0 & \bm I_d\end{pmatrix}. $$
For $ \bm r_{n+1}$, by noting that $Y_{n,q}$ is positive eventually and thus $Y_{n,q}=Y_{n,q}^+$  eventually due to   Theorem \ref{theorem4.1} and the fact that $v_q>0$, we have
\begin{equation}\label{eqY-Y}\bm Y_n^+ -\bm Y_n=O(1)\;\; a.s.\end{equation}
It follows that
\begin{equation}\label{eqRe}\sum_{m=1}^n\|\bm r_m\|=O(1)\sum_{m=1}^n\|\bm H_m-\bm H\|+O(1) \;\; a.s.
\end{equation}
For $\Delta \bm M_{n+1}$,   write $\bm\Sigma_{n,1}=  diag\big(\frac{\bm Y_n}{n}\big)-\frac{\bm Y_n^{\rm t}}{n}\frac{\bm Y_n}{n}$, $\bm \Sigma_{n,2}=\sum_{q=1}^d \frac{Y_{n,q}}{n} \bm V_{n+1,q}$. We have
$$ \ep[(\Delta\bm M_{n+1})^{\rm t}\Delta\bm M_{n+1}|\mathscr{F}_n]=\begin{pmatrix} \bm H^{\rm t} \bm\Sigma_{n,1}\bm H +\bm\Sigma_{n,2} & \bm H^{\rm t} \bm\Sigma_{n,1}\\
\bm\Sigma_{n,1}\bm H &   \bm\Sigma_{n,1} \end{pmatrix}. $$
Then, under Assumptions \ref{assumption4.1} and \ref{assumption4.2},
$$ \frac{1}{n}\sum_{m=1}^n  \ep[(\Delta\bm M_m)^{\rm t}\Delta\bm M_m|\mathscr{F}_{m-1}]\to \bm\Gamma \;\; a.s. \text{ or in } L_1, $$
where
$$\bm\Gamma=\begin{pmatrix} \bm H^{\rm t} \bm\Sigma_1\bm H +\bm\Sigma_2 & \bm H^{\rm t} \bm\Sigma_1\\
\bm\Sigma_1\bm H &   \bm\Sigma_1 \end{pmatrix},\;\; \bm\Sigma_1=diag(\bm v)-\bm v^{\rm t}\bm v,\;\; \bm\Sigma_2=\sum_{q=1}^d v_q\bm V_q.  $$
Finally, for $\bm h(\bm\theta)$,  it is easily seen that $\bm h(\bm\theta)$ is twice differentiable at $\bm\theta^{\ast}$ with
$$ D\bm h(\bm\theta^{\ast})=\begin{pmatrix} \bm I_d-(\bm H-\bm 1^{\rm t}\bm v) & -(\bm I_d-\bm 1^{\rm t}\bm v) \\ \bm 0 & \bm I_d\end{pmatrix}. $$
Obviously, the system of the eigenvalues of both $D\bm h(\bm\theta^{\ast})$ and $\bm I_d-(\bm H-\bm 1^{\rm t}\bm v)$ is $\{1, 1-\lambda_2,\cdots, 1-\lambda_t\}$. Thus,
$\rho=\min\{Re(1), Re(1-\lambda_2),\cdots, Re(1-\lambda_t)\}=1-\lambda_{sec}$. Further, it can be shown that if $\lambda_a\ne 0$,  then the largest order of  Jordan blocks  of both $D\bm h(\bm\theta^{\ast})$ and $\bm I_d-(\bm H-\bm 1^{\rm t}\bm v)$ with respect to their eigenvalue $1-\lambda_a$ is the same as the largest order of    Jordan blocks of $\bm H$ with respect to its eigenvalue $\lambda_a$.
Hence, by applying Theorems \ref{theorem1} and \ref{theorem2} we have the following central limit theorems for $(\bm Y_n,\bm N_n)$.

\begin{theorem} \label{theorem4.2}    Suppose that Assumptions \ref{assumption4.1} and \ref{assumption4.2} are satisfied.

 (i)  Further,   assume that $\lambda_{sec}<1/2$ and
 \begin{equation}\label{conditionTh4.2.1}
  \sum_{m=1}^n \bm \|\bm H_m-\bm H\|=o(\sqrt{n})\; \; a.s.\;  \text{ or in } L_1.
  \end{equation}
  Then
$$ \sqrt{n} \left(\frac{\bm Y_n}{n}-\bm v, \frac{\bm N_n}{n}-\bm v\right)   \overset{D}\to N(\bm 0,\widetilde{\bm \Sigma})\; (\text{stably}),$$
where
$$ \widetilde{\bm \Sigma}= \int_0^{\infty}\big(e^{-\bm Q u})^{\rm t}\bm\Gamma
e^{-\bm Q u}du $$
and
$$   \bm Q=\begin{pmatrix} \bm H-\bm 1^{\rm t}\bm v-\bm I_d/2 &  \bm I_d-\bm 1^{\rm t}\bm v \\ \bm 0 & -\bm I_d/2\end{pmatrix}. $$
 (ii)  Assume that $\lambda_{sec}=1/2$ and
 \begin{equation}\label{conditionTh4.2.2}
  \sum_{m=1}^n \frac{\bm \|\bm H_m-\bm H\|}{\sqrt{m}}=o(\sqrt{\log n}) \; a.s.\;\; \text{ or in } L_1.
  \end{equation}
  Then
$$ \frac{\sqrt{n}}{(\log n)^{\nu-1/2}} \left(\frac{\bm Y_n}{n}-\bm v, \frac{\bm N_n}{n}-\bm v\right)   \overset{D}\to N(\bm 0,\widetilde{\bm \Sigma})\; (\text{stably}),$$
where
$$ \widetilde{\bm \Sigma}= \lim_{n\to \infty} \frac{1}{(\log n)^{2\nu-1}}\int_0^{\log n}\big(e^{-\bm Q u})^{\rm t}\bm\Gamma
e^{-\bm Q u}du. $$
\end{theorem}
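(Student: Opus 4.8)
The plan is to exploit the fact that, in the paragraphs preceding the statement, the normalized vector $\bm\theta_n=(\bm Y_n^+/n,\bm N_n/n)$ has already been exhibited as a solution of the SA recursion~(\ref{eqModel}) with explicit $\bm h$, $\Delta\bm M_n$, $\bm r_n$, $D\bm h(\bm\theta^{\ast})$ and $\bm\Gamma$. Thus the proof reduces to checking the hypotheses of Theorem~\ref{theorem1} in case (i) and of Theorem~\ref{theorem2} in case (ii), applying those theorems to $\bm\theta_n$, and then rewriting the conclusion in terms of $\bm Y_n$ and the matrix $\bm Q$.

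First I would deal with convergence and the regularity of $\bm h$. The a.s.\ convergence $\bm\theta_n\to\bm\theta^{\ast}=(\bm v,\bm v)$ is Theorem~\ref{theorem4.1} — whose hypothesis~(\ref{eqth4.1.0}) is contained in~(\ref{eq4.2.2}) — together with $\bm Y_n^+-\bm Y_n=O(1)$ a.s.\ from~(\ref{eqY-Y}). Since all $v_k>0$, in a neighbourhood of $\bm\theta^{\ast}$ one has $|\bm\theta^{(1)}|=\sum_k\theta^{(1)}_k$, which is linear and nonvanishing there, so $\bm h$ is real-analytic near $\bm\theta^{\ast}$; this yields Assumption~\ref{assump0} and also the Taylor expansion~(\ref{eqAssump1.1}) needed for Assumption~\ref{assump1} (with any $\epsilon\in(0,1)$). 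The eigenvalues of the block-triangular matrix $D\bm h(\bm\theta^{\ast})$ are $1$ together with $1-\lambda_2,\dots,1-\lambda_s$, so their real parts are $\ge 1-\lambda_{sec}$; under the hypothesis $\lambda_{sec}\le 1/2$ they are all positive, $\rho=1-\lambda_{sec}$, with $\rho>1/2$ in case (i) and $\rho=1/2$ in case (ii).

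Next I would verify Assumption~\ref{assump2} and the remainder conditions. The Lindeberg condition~(\ref{eqAssump2.2}) for $\Delta\bm M_n=(\Delta\bm M_{n+1,1}\bm H+\Delta\bm M_{n+1,2},\Delta\bm M_{n+1,1})$ follows from~(\ref{eq4.2.1}), because $\Delta\bm M_{n,1}$ is bounded (the $\bm X_n$ are $0$--$1$ vectors) and $\|\Delta\bm M_{n,2}\|\le\|\bm D_n\|+\ep[\|\bm D_n\|\mid\mathscr{F}_{n-1}]$, while the conditional-covariance convergence~(\ref{eqAssump2.3}) to the displayed $\bm\Gamma$ is exactly the computation already recorded, using Assumptions~\ref{assumption4.1}--\ref{assumption4.2} and $\bm\theta_n\to\bm\theta^{\ast}$. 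For the remainder, from~(\ref{eqRe}) and $\bm Y_n^+=\bm Y_n$ for all large $n$ a.s.\ one gets $\sum_{m=1}^n\|\bm r_m\|\le C\sum_{m=1}^n\|\bm H_m-\bm H\|+O(1)$ and $\sum_{m=1}^n m^{-1/2}\|\bm r_m\|\le C\sum_{m=1}^n m^{-1/2}\|\bm H_m-\bm H\|+O(1)$, so~(\ref{conditionTh4.2.1}) implies~(\ref{conditionTh1.1}) and~(\ref{conditionTh4.2.2}) implies~(\ref{conditionTh2.2}). Theorem~\ref{theorem1}, resp.\ Theorem~\ref{theorem2}, then gives the stable CLT for $\bm\theta_n$ at rate $\sqrt n$, resp.\ $\sqrt n/(\log n)^{\nu-1/2}$, with limit covariance built from $e^{-(D\bm h(\bm\theta^{\ast})-\bm I_{2d}/2)u}$; substituting the explicit form of $D\bm h(\bm\theta^{\ast})$ turns this exponential into $e^{-\bm Q u}$ with $\bm Q$ as in the statement, and replacing $\bm Y_n^+/n$ by $\bm Y_n/n$ is harmless since they agree for all large $n$.

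The one genuinely non-routine point, and the step I expect to require care, is the reconciliation of the two meanings of $\nu$ in case (ii): Theorem~\ref{theorem2} uses $\nu$ equal to the largest order of a Jordan block of $D\bm h(\bm\theta^{\ast})$ among eigenvalues of real part $1/2$, whereas the statement uses the $\nu$ of Assumption~\ref{assumption4.1}, namely the largest Jordan-block order of $\bm H$ among eigenvalues with $Re(\lambda)=\lambda_{sec}=1/2$. These coincide by the Jordan-structure comparison indicated just before the theorem — the block-triangular form of $D\bm h(\bm\theta^{\ast})$, the deflation $\bm H-\bm 1^{\rm t}\bm v$, and the fact that $\lambda_a\ne 0$ here since $Re(\lambda_a)=1/2$ — and it is this linear-algebra bookkeeping, not any hard analysis, where the argument must be carried out carefully.
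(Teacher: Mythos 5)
Your proposal is correct and follows essentially the same route as the paper: the preamble of Section \ref{sectionUrn}.2 already casts $(\bm Y_n^+/n,\bm N_n/n)$ as the SA recursion (\ref{eqModel}) with the stated $\bm h$, $\Delta\bm M_n$, $\bm r_n$, $D\bm h(\bm\theta^{\ast})$ and $\bm\Gamma$, and the paper likewise concludes by invoking Theorems \ref{theorem1} and \ref{theorem2} after exactly the eigenvalue and Jordan-block bookkeeping you describe (including the identification $\rho=1-\lambda_{sec}$ and the matching of the two meanings of $\nu$ via the deflation $\bm H-\bm 1^{\rm t}\bm v$). One small shared slip worth flagging: since $D\bm h(\bm\theta^{\ast})-\bm I_{2d}/2=-\bm Q$, the exponential produced by Theorems \ref{theorem1} and \ref{theorem2} is $e^{\bm Q u}$ (whose integral converges because every eigenvalue of $\bm Q$ has negative real part when $\lambda_{sec}\le 1/2$), not $e^{-\bm Q u}$ --- a sign typo already present in the paper's statement that your substitution step reproduces rather than introduces.
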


\begin{remark}
It can be verified that (\ref{CondBaiHu3}), which is  the condition of Bai and Hu (2005), implies (\ref{conditionTh4.2.1}) and (\ref{conditionTh4.2.2}). (\ref{conditionTh4.2.1}) is also weaker than (\ref{CondLarPages1}), which is condition (2.11) in Laruelle and Pag\`es (2013). Further, it can be verified that either (\ref{conditionTh4.2.1}) or  (\ref{conditionTh4.2.2}) implies
$$ \sum_{m=1}^n\|\bm H_m-\bm H\|=o(n^{1-\epsilon_0})\; a.s. \;  \text{ for some } \epsilon_0>0 $$
(cf. the proof of (\ref{eqproofth3.1})). Thus, condition (\ref{eqassumption4.1.2}) can be removed from the theorems.
\end{remark}


\begin{theorem} \label{theorem4.3}  Suppose that  Assumptions \ref{assumption4.1} and (\ref{eqth4.1.0}) are satisfied. Further, assume that $\lambda_{sec}>1/2$ and that
\begin{equation}\label{condition4.3.1} \sum_{m=1}^n \|\bm H_m-\bm H\|=o(n^{\lambda_{sec}-\delta_0})\;\; a.s. \;\text{ for some } \delta_0>0.
\end{equation}
Then, there are random complex  variables $\xi_2$, $\cdots$, $\xi_s$ and non-zero linearly independent left eigenvectors $\bm l_2$, $\cdots$, $\bm l_s$ of $\bm H$ with $\bm l_a\bm H =\lambda_a \bm l_a$ such that
\begin{equation}\label{eqTh4.3.2}\frac{n^{1-\lambda_{sec}}}{(\log n)^{\nu-1}}\big(\frac{\bm N_n}{n}-\bm v \big)- \sum_{a: Re(\lambda_a)=\lambda_{sec}, \nu_a=\nu}\hspace{-6mm} e^{{\rm i}Im(\lambda_t)\log n}  \xi_a\bm l_a(\bm I-\bm 1^{\prime}\bm v)\to \bm 0\; a.s.
\end{equation}
and
\begin{equation}\label{eqTh4.3.1} \frac{n^{1-\lambda_{sec}}}{(\log n)^{\nu-1}}\big(\frac{\bm Y_n}{n}-\bm v \big)-\frac{n^{1-\lambda_{sec}}}{(\log n)^{\nu-1}}\big(\frac{\bm N_n}{n}-\bm v \big)\bm H\to \bm 0\; a.s.
\end{equation}
\end{theorem}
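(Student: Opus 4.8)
The plan is to apply Theorem~\ref{theorem3} to the normalized pair $\bm\theta_n=(\bm Y_n^+/n,\bm N_n/n)$, which, as shown just before Theorem~\ref{theorem4.2}, obeys the SA recursion~(\ref{eqproofEq}) with the $\bm h$, $\Delta\bm M_n$, $\bm r_n$ and limiting covariance $\bm\Gamma$ displayed there. The structural facts recorded in that discussion are that $D\bm h(\bm\theta^{\ast})$ has spectrum $\{1,1-\lambda_2,\ldots,1-\lambda_s\}$, so the exponent $\rho$ of Assumption~\ref{assump0} equals $1-\lambda_{sec}$, which lies in $(0,1/2)$ precisely because $1/2<\lambda_{sec}<1$ (recall $\lambda_{sec}<\alpha=1$ since $\alpha$ is the single largest eigenvalue of $\bm H$); and, since every $\lambda_a$ with $Re(\lambda_a)=\lambda_{sec}$ is non-zero, the largest order $\nu$ of a Jordan block of $D\bm h(\bm\theta^{\ast})$ attached to an eigenvalue of real part $\rho$ coincides with the corresponding quantity $\nu$ for $\bm H$.

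Next I would check the hypotheses of Theorem~\ref{theorem3}. The convergence $\bm\theta_n\to\bm\theta^{\ast}$ a.s.\ is Theorem~\ref{theorem4.1} (applicable since $\lambda_{sec}<1$ and~(\ref{eqth4.1.0}) holds) together with the fact that $Y_{n,q}>0$ eventually, so $\bm Y_n^+=\bm Y_n$ eventually. Assumption~\ref{assump1} holds with $\epsilon=1$ since near $\bm\theta^{\ast}$ one has $|\bm\theta^{(1)}|=\bm\theta^{(1)}\bm 1^{\rm t}$, so $\bm h$ is twice differentiable at $\bm\theta^{\ast}$. Condition~(\ref{condition1}) follows from the block form of $\ep[(\Delta\bm M_m)^{\rm t}\Delta\bm M_m|\mathscr{F}_{m-1}]$: each block is built from the a.s.\ bounded matrices $\bm\Sigma_{m-1,1}$ and from $\bm\Sigma_{m-1,2}=\sum_q(Y_{m-1,q}/(m-1))\bm V_{m,q}$ with bounded coefficients, whose partial sums are $O(n)$ by~(\ref{eqth4.1.0}). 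Finally condition~(\ref{condition2}), $\sum_{m\le n}\bm r_m=o(n^{1-\rho-\delta_0})=o(n^{\lambda_{sec}-\delta_0})$, follows from~(\ref{eqRe}) --- which uses $\bm Y_n^+-\bm Y_n=O(1)$ a.s., i.e.~(\ref{eqY-Y}) and hence Theorem~\ref{theorem4.1} --- combined with the present hypothesis~(\ref{condition4.3.1}).

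Theorem~\ref{theorem3} then provides complex random variables $\xi_a$, indexed by the eigenvalues $1-\lambda_a$ of $D\bm h(\bm\theta^{\ast})$ of real part $\rho$ and Jordan order $\nu$ (equivalently by $a$ with $Re(\lambda_a)=\lambda_{sec}$, $\nu_a=\nu$; the eigenvalue $1$ of $D\bm h(\bm\theta^{\ast})$ contributes nothing since $Re(1)=1\ne\rho$), such that, using $Im(1-\lambda_a)=-Im(\lambda_a)$,
\[
\frac{n^{1-\lambda_{sec}}}{(\log n)^{\nu-1}}\Big(\frac{\bm N_n}{n}-\bm v\Big)-\sum_{a:\,Re(\lambda_a)=\lambda_{sec},\,\nu_a=\nu}e^{{\rm i}\,Im(\lambda_a)\log n}\,\xi_a\,\bm f_a^{(2)}\to \bm 0\quad a.s.,
\]
where $\bm f_a=(\bm f_a^{(1)},\bm f_a^{(2)})$ is the $2d$-row-vector $\bm e_a\bm T^{-1}$ of Theorem~\ref{theorem3}, a left eigenvector of $D\bm h(\bm\theta^{\ast})$ for $1-\lambda_a$. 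Writing $\bm f_a\,D\bm h(\bm\theta^{\ast})=(1-\lambda_a)\bm f_a$ with the block matrix $\begin{pmatrix}\bm I_d-(\bm H-\bm 1^{\rm t}\bm v)&-(\bm I_d-\bm 1^{\rm t}\bm v)\\\bm 0&\bm I_d\end{pmatrix}$ gives $\bm f_a^{(1)}(\bm H-\bm 1^{\rm t}\bm v)=\lambda_a\bm f_a^{(1)}$ and $\bm f_a^{(2)}=\lambda_a^{-1}\bm f_a^{(1)}(\bm I_d-\bm 1^{\rm t}\bm v)$; since $\bm v(\bm I_d-\bm 1^{\rm t}\bm v)=\bm 0$, subtracting a suitable multiple of $\bm v$ from $\bm f_a^{(1)}$ produces a genuine left eigenvector $\bm l_a$ of $\bm H$ (with $\bm l_a\bm H=\lambda_a\bm l_a$) without altering $\bm f_a^{(1)}(\bm I_d-\bm 1^{\rm t}\bm v)$, and these $\bm l_a$ are non-zero and can be chosen linearly independent. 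Absorbing the scalar $\lambda_a^{-1}$ into $\xi_a$ yields~(\ref{eqTh4.3.2}).

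It remains to prove~(\ref{eqTh4.3.1}), which I would establish directly rather than through the first block of $\bm\theta_n$. Telescoping the two recursions $\bm Y_n=\bm Y_{n-1}+\frac{\bm Y_{n-1}^+}{|\bm Y_{n-1}^+|}\bm H+\Delta\bm M_{n,1}\bm H+\Delta\bm M_{n,2}+\bm X_n(\bm H_n-\bm H)$ and $\bm N_n=\bm N_{n-1}+\frac{\bm Y_{n-1}^+}{|\bm Y_{n-1}^+|}+\Delta\bm M_{n,1}$, and using $\bm v\bm H=\bm v$, gives
\[
\frac{\bm Y_n}{n}-\bm v=\Big(\frac{\bm N_n}{n}-\bm v\Big)\bm H+\frac1n\Big(O(1)+\bm M_{n,2}+\sum_{m\le n}\bm X_m(\bm H_m-\bm H)\Big)\quad a.s.
\]
Here $\bm M_{n,2}=o(n^{1/2+\delta})$ a.s.\ for every $\delta>0$ by the reasoning behind~(\ref{eqproofth3.2}) and~(\ref{condition1}), while $\big\|\sum_{m\le n}\bm X_m(\bm H_m-\bm H)\big\|\le\sum_{m\le n}\|\bm H_m-\bm H\|=o(n^{\lambda_{sec}-\delta_0})$ by~(\ref{condition4.3.1}); since $\lambda_{sec}>1/2$, the whole error is $o(n^{\lambda_{sec}-1-\tau})$ for some $\tau>0$, so multiplying by $n^{1-\lambda_{sec}}/(\log n)^{\nu-1}$ sends it to $\bm 0$ a.s., which is~(\ref{eqTh4.3.1}). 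The main obstacle is the bookkeeping around condition~(\ref{condition2}), in particular the bound $\bm Y_n^+-\bm Y_n=O(1)$ a.s. --- which is exactly where Theorem~\ref{theorem4.1} and the strict positivity $v_q>0$ enter --- and the verification that the Jordan structure and the eigenvector identifications transfer correctly from the $d\times d$ matrix $\bm H$ to the $2d\times 2d$ matrix $D\bm h(\bm\theta^{\ast})$.
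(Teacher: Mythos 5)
Your proposal is correct, and it reaches both conclusions by a route that genuinely differs from the paper's in two places. For (\ref{eqTh4.3.2}), the paper applies Theorem \ref{theorem3} only to the $d$-dimensional block $\bm Y_n^+/n$ (with regression function $\bm h_1$ and Jacobian $\bm I_d-(\bm H-\bm 1^{\rm t}\bm v)$), and then obtains the $\bm N_n$ asymptotics by summing the recursion (\ref{eqforN}) and evaluating $\sum_{m\le n} m^{\lambda_{sec}-1}(\log m)^{\nu-1}e^{{\rm i}Im(\lambda_a)\log m}\sim \lambda_a^{-1}n^{\lambda_{sec}}(\log n)^{\nu-1}e^{{\rm i}Im(\lambda_a)\log n}$; you instead apply Theorem \ref{theorem3} to the full $2d$-dimensional system (\ref{eqproofEq}) and read the $\bm N_n$ limit off the second block of the left eigenvectors of the $2d\times 2d$ Jacobian. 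Your block computation $\bm f_a^{(2)}=\lambda_a^{-1}\bm f_a^{(1)}(\bm I_d-\bm 1^{\rm t}\bm v)$ produces exactly the same factor $\lambda_a^{-1}$ that the paper gets from the Abel-type summation, which is a reassuring consistency check, and your deflation step (subtracting $\frac{\bm f_a^{(1)}\bm 1^{\rm t}}{1-\lambda_a}\bm v$, legitimate since $\lambda_a\ne 1$, to turn an eigenvector of $\bm H-\bm 1^{\rm t}\bm v$ into one of $\bm H$ without changing its image under $\bm I_d-\bm 1^{\rm t}\bm v$) is carried out more explicitly than in the paper. The price of your route is that you must know the maximal Jordan blocks for the eigenvalues $1-\lambda_a$ transfer from $\bm H$ to the $2d\times 2d$ matrix; you flag this, and since $1-\lambda_a\ne 1$ while the lower-right diagonal block is $\bm I_d$, the generalized eigenspace for $1-\lambda_a$ is carried entirely by the upper-left block, so the transfer holds — and in any case the paper itself asserts this fact without proof just before Theorem \ref{theorem4.2}. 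For (\ref{eqTh4.3.1}), the paper deduces it from the eigenvector identity $\bm l_a(\bm I-\bm 1^{\rm t}\bm v)\bm H=\bm e_a\bm T^{-1}$ together with the first-block asymptotics and (\ref{eqY-Y}); your telescoping of the two recursions, which cancels the common term $\sum_m\frac{\bm Y_{m-1}^+}{|\bm Y_{m-1}^+|}\bm H$ and the $\bm M_{n,1}\bm H$ martingale and leaves only $O(1)+\bm M_{n,2}+o(n^{\lambda_{sec}-\delta_0})$, is more elementary, bypasses the eigenvector bookkeeping entirely, and uses $\lambda_{sec}>1/2$ only to absorb the $o(n^{1/2+\delta})$ bound on $\bm M_{n,2}$. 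Both versions verify the hypotheses of Theorem \ref{theorem3} (in particular (\ref{condition2}) via (\ref{eqY-Y}), (\ref{eqRe}) and (\ref{condition4.3.1})) in the same way.
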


\begin{proof} We apply Theorem \ref{theorem3} to prove this theorem.  Assume that $\bm T$ is a matrix such that
 $$\bm T^{-1}[\bm I_d-(\bm H-\bm 1^{\rm t}\bm v)]\bm T=diag\big(1, (1-\lambda_2)\bm I_{\nu_2}+\overline{\bm J}_{\nu_2},\cdots, (1-\lambda_s)\bm I_{\nu_s}+\overline{\bm J}_{\nu_s}\big). $$
 By (\ref{eqproofEq}), we have
$$ \frac{\bm Y_{n+1}^+}{n+1}  =  \frac{\bm Y_n^+}{n} -\frac{\bm h_1(\frac{\bm Y_n^+}{n})}{n+1}+\frac{\Delta \bm M_{n+1,1}\bm H+\Delta \bm M_{n+1,2}+\bm r^{(1)}_{n+1}}{n+1}, $$
where $\bm h_1(\bm\theta^{(1)})=\bm I_d-\frac{\bm H}{|\bm \theta^{(1)}|}$ is twice-differentiable with $D\bm h_1(\bm v)=\bm I_d-(\bm H-\bm 1^{\rm t}\bm v)$. Condition  (\ref{condition1}) is satisfied by assumption  (\ref{eqth4.1.0}),  and (\ref{condition2}) is  satisfied by (\ref{eqRe}) and assumption  (\ref{condition4.3.1}). Thus, by Theorem \ref{theorem3},
there are  complex random variables $\xi_2$, $\cdots$, $\xi_s$ such that
$$ \frac{n^{1-\lambda_{sec}}}{(\log n)^{\nu-1}}\big(\frac{\bm Y_n^+}{n}-\bm v \big)-\sum_{a: Re(\lambda_a)=\lambda_{sec}, \nu_a=\nu} e^{{\rm i}Im(\lambda_a)\log n} \xi_a\bm e_a\bm T^{-1}\to \bm 0\; a.s.
$$
From (\ref{eqforN}) and the above convergence,  we have
\begin{align*}
&\bm N_n-n\bm v\\
=& \bm M_{n,1}+
\sum_{m=1}^{n-1} \big(\frac{\bm Y_m^+}{m}-\bm v\big)\big(\frac{m}{\bm Y_m^+}-1\big)(\bm I_d-\bm 1^{\rm t}\bm v)+\sum_{m=1}^{n-1} \big(\frac{\bm Y_m^+}{m}-\bm v\big)(\bm I_d-\bm 1^{\rm t}\bm v)\\
=&O(\sqrt{n\log\log n})+\sum_{m=1}^{n-1}\big(O(1)\frac{(\log m)^{\nu-1}}{m^{1-\lambda_{sec}}}\big)^2\\
& +\sum_{m=1}^{n-1}\frac{(\log m)^{\nu-1}}{m^{1-\lambda_{sec}}}\sum_{a: Re(\lambda_a)=\lambda_{sec}, \nu_a=\nu} \hspace{-6mm} \big[ e^{{\rm i}Im(\lambda_a)\log m} \xi_a\bm e_a\bm T^{-1}+o(1)\big](\bm I_d-\bm 1^{\rm t}\bm v)
\\
=& n^{\lambda_{sec}}(\log n)^{\nu-1}\Big[o(1)
 +\sum_{a: Re(\lambda_a)=\lambda_{sec}, \nu_a=\nu}  \hspace{-6mm} e^{{\rm i}Im(\lambda_a)\log n} \lambda_a^{-1}\xi_a\bm e_a\bm T^{-1}(\bm I_d-\bm 1^{\rm t}\bm v)\Big]\; a.s.
\end{align*}
Write
$\bm l_a=\lambda_a^{-1}\bm e_a\bm T^{-1}(\bm I-\bm u^{\prime}\bm v)$ if $\lambda_a\ne 0$ and $\bm l_a= \bm e_a\bm T^{-1}(\bm I-\bm u^{\prime}\bm v)$ if $\lambda_a= 0$. Note that $\bm e_a\bm T^{-1}$ is a left eigenvector of $\bm H-\bm 1^{\prime}\bm v$ with respect to the eigenvalue $\lambda_a$.  We conclude that $\bm l_a\bm H=\lambda_a^{-1}\bm e_a\bm T^{-1}(\bm H-\bm u^{\prime}\bm v)=\lambda_a^{-1}\bm e_a\bm T^{-1}(\bm H-\bm 1^{\prime}\bm v)(\bm I-\bm u^{\prime}\bm v)=\lambda_a\bm l_a$ if $\lambda_a\ne 0$ and  $\bm l_a\bm H=\bm e_a\bm T^{-1}(\bm H-\bm 1^{\prime}\bm v)(\bm I-\bm u^{\prime}\bm v)=\bm 0$ if $\lambda_a=0$. Further,  $\bm l_a(\bm I-\bm 1^{\prime}\bm v)=\lambda_a^{-1}\xi_a\bm e_a\bm T^{-1}(\bm I_d-\bm 1^{\rm t}\bm v)$ and $\bm l_a(\bm I-\bm 1^{\prime}\bm v)\bm H=\lambda_a^{-1}\bm e_a\bm T^{-1}(\bm H-\bm 1^{\rm t}\bm v)=\bm e_a\bm T^{-1}$ when $\lambda_a\ne 0$.  (\ref{eqTh4.3.2}) is  proved, and
(\ref{eqTh4.3.1}) is also proved by noting (\ref{eqY-Y}). Finally, the linear independence of $\bm l_2$,$\cdots$, $\bm l_s$  is due to the linear independence of the system   $\{\bm v(=\bm e_1\bm T^{-1}), \bm e_2\bm T^{-1},\cdots,\bm e_s\bm T^{-1} \}$.
\end{proof}

\begin{remark} When $\lambda_{sec}>1/2$, Bai and Hu (2005) showed that $\bm Y_n-n\bm v=O\big(n^{\lambda_{sec}}\log^{\nu-1}n\big)$ in probability.
Now, by Theorem \ref{theorem4.3}, $\bm Y_n-n\bm v=O\big(n^{\lambda_{sec}}\log^{\nu-1}n\big)$ a.s. and $\bm N_n-n\bm v=O\big(n^{\lambda_{sec}}\log^{\nu-1}n\big)$ a.s. Further, if all eigenvalues with $Re(\lambda_t)=\lambda_{sec}$ and $\nu_t=\nu$ are real, then both
$(\bm Y_n-n\bm v)/(n^{\lambda_{sec}}\log^{\nu-1}n)$   and $(\bm N_n-n\bm v)/(n^{\lambda_{sec}}\log^{\nu-1}n)$  a.s. converge toward a finite random vector.
\end{remark}


\appendix


\setcounter{equation}{0}

\section{ODE methods for the recursive algorithm }
\setcounter{equation}{0}

\begin{theorem}\label{thODE2} (Kushner-Clark) Consider the following recursive procedure
$$ \bm\theta_{n+1}=\bm \theta_n-\gamma_{n+1}\bm h(\bm\theta_n)+\gamma_{n+1}\bm r_{n+1}, \eqno (RP) $$
where $\bm h$ is a continuous function and $\{\gamma_n\}$ is a positive sequence that tends toward zero, such that $\sum_{n=1}^{\infty}\gamma_n$ diverges.

(a) We suppose that sequence $\{\bm \theta_n\}$ is bounded, and for all $T>0$,
\begin{equation} \label{eqthODE2.1}\lim_{n\to \infty} \sum_{j\le m(n,T)}\left\|\sum_{k=n}^j \gamma_{k+1}\bm r_{k+1}\right\| =0,
\end{equation}
where $ m(n,T)=\inf\{k:k\ge n, \gamma_{n+1}+\cdots+\gamma_{k+1}\ge T\}. $
Then, set $\Theta^{\infty}$ of the limiting values of $\bm\theta_n$   is   a compact  connected set, made stable by the
flow of the ordinary differential equation:
$$ ODE_h\equiv\dot{\bm \theta}=-\bm h(\bm\theta). \eqno (ODE1)$$

(b) Further, let $\Theta$ be a region of attraction for $\bm \theta^{\ast}$, where $\bm\theta^{\ast}$ is a zero of $\bm h$,
i.e., the following properties are satisfied:
\begin{description}
  \item[\rm (i)] For any solution of (ODE1), if $\bm\theta(0)\in \Theta$, then $\bm\theta(s)\in \Theta$ for all $s\ge 0$;
  \item[\rm (ii)] if $\bm \theta$ is a solution of (ODE1) for which $\bm\theta(0)\in \Theta$, then
  $$ \bm\theta(s)\to  \bm\theta^{\ast}\;\; \text{ as } s\to +\infty; $$
  \item[\rm (iii)]  given $\epsilon>0$, there exists $\delta>0$ such that $\bm \theta(0)\in \Theta$ and $\|\bm \theta(0)-\bm\theta^{\ast}\|\le \delta$ imply
  $\|\bm \theta(s)-\bm\theta^{\ast}\|\le \epsilon$ for all $s\ge 0$.
\end{description}
 Suppose that
$\Theta$ is a neighborhood of $\bm\theta^{\ast}$. We assume the framework of part (a). If the sequence $\{\bm\theta_n\}$ returns infinitely often to a compact subset of $\Theta$, then it tends toward $\bm \theta^{\ast}$.
\end{theorem}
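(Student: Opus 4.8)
The plan is to run the usual ODE-comparison (Kushner--Clark) machinery: embed the iterates in continuous time, show that the interpolated path eventually shadows solutions of (ODE1) over bounded time windows, and then use the region-of-attraction hypotheses (i)--(iii) to upgrade ``shadowing'' to ``trapped near $\bm\theta^{\ast}$''. Set $t_n=\gamma_1+\cdots+\gamma_n$ and let $\bar{\bm\theta}(\cdot)$ be the piecewise-constant interpolation with $\bar{\bm\theta}(t_n)=\bm\theta_n$; since $m(n,T)$ is exactly the index reached after $\gamma$-time $T$ starting from step $n$, condition (\ref{eqthODE2.1}) together with the boundedness assumed in part (a) gives, for every $T>0$ and $\eta>0$, an index $N(T,\eta)$ such that for all $n\ge N(T,\eta)$ there is a solution $\bm\phi$ of (ODE1) with $\bm\phi(0)=\bm\theta_n$ and $\sup_{0\le s\le T}\|\bar{\bm\theta}(t_n+s)-\bm\phi(s)\|<\eta$ --- this is the shadowing estimate already underlying the proof of part (a). Granting it, the whole statement reduces to a \emph{trapping lemma}: for every $\varepsilon>0$ small enough that $\overline{B(\bm\theta^{\ast},\varepsilon)}\subset\Theta$ there exist $\delta\in(0,\varepsilon)$ and $N_0$ so that, if $\bm\theta_N\in B(\bm\theta^{\ast},\delta)$ for some $N\ge N_0$, then $\bm\theta_n\in B(\bm\theta^{\ast},\varepsilon)$ for all $n\ge N$.

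To prove the trapping lemma I would fix the radii by a nested use of (iii). First pick $\delta_1\in(0,\varepsilon)$ with $\overline{B(\bm\theta^{\ast},\delta_1)}\subset\Theta$ such that every solution of (ODE1) issued from $B(\bm\theta^{\ast},\delta_1)$ stays in $B(\bm\theta^{\ast},\varepsilon/2)$ for all $s\ge0$, and put $\delta=\delta_1$. By (iii) again, for the level $\delta_1/2$ there is $\delta_2\in(0,\delta_1)$ with the property that solutions from $B(\bm\theta^{\ast},\delta_2)$ remain in $B(\bm\theta^{\ast},\delta_1/2)$; and by (ii) every solution from $\overline{B(\bm\theta^{\ast},\delta_1)}\subset\Theta$ converges to $\bm\theta^{\ast}$, hence enters $B(\bm\theta^{\ast},\delta_2)$. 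Because all such solutions also stay in the compact set $\overline{B(\bm\theta^{\ast},\varepsilon/2)}$, a routine equicontinuity/compactness argument yields a single time $T$ that works for all of them at once: $\bm\phi(0)\in\overline{B(\bm\theta^{\ast},\delta_1)}$ implies $\bm\phi(t)\in B(\bm\theta^{\ast},\delta_1/2)$ for every $t\ge T$. Now apply the shadowing estimate with this $T$ and some $\eta<(\delta_1/2)\wedge(\varepsilon/2)$, and cut $[t_N,\infty)$ into consecutive windows of $\gamma$-length $\ge T$ (possible, up to an $o(1)$ loss, since $\gamma_n\to0$). Induct on the windows: if the left endpoint of a window lies in $B(\bm\theta^{\ast},\delta_1)$, the shadowed solution stays in $B(\bm\theta^{\ast},\varepsilon/2)$, so the whole window lies in $B(\bm\theta^{\ast},\varepsilon)$; and at the right endpoint that \emph{same} shadowed solution is already in $B(\bm\theta^{\ast},\delta_1/2)$, so the iterate there lies in $B(\bm\theta^{\ast},\delta_1)$ and the next window inherits the hypothesis. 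This proves the trapping lemma.

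To finish, I use part (a). Since $\bm\theta_n$ returns infinitely often to a compact $K\subset\Theta$, the limit set $\Theta^{\infty}$ (compact, connected, and stable by the flow of (ODE1), by part (a)) meets $\Theta$; taking $x\in\Theta^{\infty}\cap\Theta$, invariance provides a forward solution of (ODE1) through $x$ that remains in $\Theta^{\infty}$, and by (i) it stays in $\Theta$, by (ii) it converges to $\bm\theta^{\ast}$, so $\bm\theta^{\ast}\in\Theta^{\infty}$. Hence $\bm\theta_n\in B(\bm\theta^{\ast},\delta)$ for infinitely many $n$; choosing one such index $\ge N_0$ and invoking the trapping lemma gives $\limsup_n\|\bm\theta_n-\bm\theta^{\ast}\|\le\varepsilon$. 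Since $\varepsilon$ was an arbitrary small number, $\bm\theta_n\to\bm\theta^{\ast}$.

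The step I expect to be the real obstacle is the self-perpetuation of the induction in the trapping lemma. A naive version that re-compares $\bar{\bm\theta}$ to a fresh ODE solution at the end of each window controls the iterate only to within $\varepsilon$, not within the much smaller $\delta_1$ needed to restart, so the windows fail to chain; this is exactly where hypothesis (iii), not merely (i)--(ii), is essential --- together with a uniform-in-the-initial-point form of the attraction (ii), obtained by compactness --- since it forces the tracked solution to have been pulled well inside $B(\bm\theta^{\ast},\delta_1/2)$ before the window ends, absorbing the shadowing error. The secondary points, all immediate from $\gamma_n\to0$, boundedness of $\{\bm\theta_n\}$ and (\ref{eqthODE2.1}), are that a window of prescribed $\gamma$-length exists up to $o(1)$, that single-step increments $\bm\theta_{n+1}-\bm\theta_n=\gamma_{n+1}\bigl(\bm r_{n+1}-\bm h(\bm\theta_n)\bigr)$ tend to $0$, and that $\bm h$ is bounded on the (bounded) range of the iterates.
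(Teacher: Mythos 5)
The paper does not actually prove this statement: it is imported verbatim as the Kushner--Clark theorem and attributed to Duflo (1997, p.~318), so there is no in-paper argument to compare yours against. Judged on its own merits, your proof of part (b) is the standard one and is correct. The nested application of (iii) to fix $\delta_2<\delta_1<\varepsilon$, the compactness/Arzel\`a--Ascoli argument producing a single attraction time $T$ valid for \emph{every} solution issued from $\overline{B(\bm\theta^{\ast},\delta_1)}$, and the windowed induction in which the tracked solution has already re-entered $B(\bm\theta^{\ast},\delta_1/2)$ by the end of the window (so that a shadowing error $\eta<\delta_1/2$ is absorbed and the induction hypothesis is restored at the next left endpoint) is exactly how the chaining is made to close; your diagnosis that (iii) together with a compactness-uniformized form of (ii) is the essential ingredient, and that naive re-comparison at each window fails, is on the mark. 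The concluding step --- using the flow-invariance of $\Theta^{\infty}$ from part (a) together with (i)--(ii) to place $\bm\theta^{\ast}$ in $\Theta^{\infty}$, hence to get infinitely many visits to $B(\bm\theta^{\ast},\delta)$, and then invoking the trapping lemma --- is also valid; an equally common shortcut is to shadow, from each visit to the compact $K\subset\Theta$, a solution that reaches $B(\bm\theta^{\ast},\delta)$ in a time uniform over $K$.

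The one incompleteness to flag is that part (a) and the shadowing estimate are granted rather than proved, whereas the statement as posed includes part (a). Since $\bm h$ is only assumed continuous, these rest on an Arzel\`a--Ascoli argument (asymptotic equicontinuity of the time-shifted interpolations $\bar{\bm\theta}(t_n+\cdot)$, with every subsequential limit satisfying the integral form of (ODE1)) rather than on any Gronwall comparison, because solutions of (ODE1) need not be unique. Your formulation of the shadowing lemma --- ``there is \emph{a} solution $\bm\phi$ with $\bm\phi(0)=\bm\theta_n$'' --- is the correct one for that non-unique setting, and since (i)--(iii) are quantified over all solutions they do apply to whichever solution the compactness argument hands you. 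With that caveat the proposal is sound and matches the classical proof the paper is citing.
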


This is called the Kushner-Clark  theorem  and can be found in the book by Duflo (1997, p. 318).
A similar theorem  is obtained by Ljung (1977). Variants and improvements have been
proposed in classical textbooks by scholars such as  Duflo (1996, 1997), Kushner and Clark (1978) and Kushner and Yin (2003),  and in some papers (see e.g.,  Fort  and   Pag\`e, 1996).

\begin{remark} \label{remarkA.1} If  $\gamma_n\equiv\frac{1}{n}$ and $\frac{1}{n}\sum_{k=1}^n \bm r_k\to 0$, then (\ref{eqthODE2.1}) is satisfied.

In fact, let $\bm s_n=\sum_{k=1}^n \bm r_k$. Then, for $j\le m(n,T)$, we have $\sum_{k=n+1}^j \frac{1}{k}\le T$ and
\begin{align*}
\sum_{k=n}^j \frac{ \bm r_{k+1}}{k+1}=\sum_{k=n+1}^j \frac{\bm s_k}{k+1}\frac{1}{k}+\frac{\bm s_{j+1}}{j+1}-\frac{\bm s_n}{n+1}.
\end{align*}
It follows that
\begin{align*}  \max_{j\le m(n,T)}\left\|\sum_{k=n}^j \frac{\bm r_{k+1}}{k+1}\right\|\le & \sup_{m\ge n}\frac{\|\bm s_m\|}{m}\sum_{k=n+1}^{m(n,T)} \frac{1}{k}
+2\sup_{m\ge n}\frac{\|\bm s_m\|}{m}\\
 \le & (T+2)\sup_{m\ge n}\frac{\|\bm s_m\|}{m}\to 0\;\; a.s. \;\;\text{ as } n\to \infty.
 \end{align*}
 \end{remark}

\begin{theorem}\label{thODE1} Let $\bm H$ be a matrix satisfying Assumption 4.1. Suppose that $\bm u^{\rm t}>0$ and $\bm v>0$ are, respectively, the right and left eigenvectors of $\bm H$ with respect to the largest eigenvalue $1$ with $\bm v\bm 1^{\rm t}=1$ and $\bm v\bm u^{\rm t}=1$.
Consider the ordinary differential equation
$$ \dot{\bm\theta}=-\bm \theta\Big(\bm I_d-\frac{ \bm H} {|\bm \theta|}\Big), \;\; \bm\theta( 0)=\bm\theta_0, \eqno (ODE2) $$
where $|\bm \theta|=\sum_{k=1}^d |\theta_k|$. Then, $\Theta=\{\bm\theta:\bm\theta\bm u^{\rm t}>0\}$ is a region of attraction for $\bm v$.
\end{theorem}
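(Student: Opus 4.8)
The plan is to linearise (ODE2) by combining a radial rescaling with a time change, after which the long‑time behaviour is governed entirely by the matrix exponential of $\bm H-\bm I_d$, whose spectral structure is pinned down by Assumption \ref{assumption4.1}. Throughout, a solution $\bm\theta(t)$ will be followed through the scalar $w(t)=\bm\theta(t)\bm u^{\rm t}$ and the normalisation $|\bm\theta(t)|=\sum_k|\theta_k(t)|$; recall that we must verify the three defining properties of a region of attraction in Theorem \ref{thODE2}(b), namely forward invariance of $\Theta$, convergence of every trajectory started in $\Theta$ to $\bm v$, and uniform stability of $\bm v$.

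First I would settle existence and invariance. The right‑hand side of (ODE2) is locally Lipschitz on $\mathbb R^d\setminus\{\bm 0\}$, so there is a unique maximal solution. Using $\bm H\bm u^{\rm t}=\bm u^{\rm t}$ one gets $\dot w=w\bigl(1/|\bm\theta|-1\bigr)$, a scalar linear equation for $w$; since $\bm\theta_0\in\Theta$ gives $w(0)>0$, it follows that $w(t)>0$ on the maximal interval, and $\tfrac{d}{dt}\log w\ge -1$ gives $w(t)\ge w(0)e^{-t}$. Because $w\le(\max_k u_k)\,|\bm\theta|$, the norm $|\bm\theta(t)|$ is bounded below on every finite interval, so the trajectory stays away from $\bm 0$; and the crude estimate $\|\bm\theta\bm H\|/|\bm\theta|\le\|\bm H\|$ gives at most exponential growth of $\|\bm\theta\|$, ruling out blow‑up. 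Hence the solution is defined on $[0,\infty)$ and remains in $\Theta$, which is property (i).

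Next I would put $\bm\phi=\bm\theta/w$, so $\bm\phi\bm u^{\rm t}\equiv 1$, and compute $\dot{\bm\phi}=\tfrac1{|\bm\theta|}\bm\phi(\bm H-\bm I_d)$. The time change $\sigma(t)=\int_0^t ds/|\bm\theta(s)|$ turns this into the autonomous linear system $\tfrac{d\bm\phi}{d\sigma}=\bm\phi(\bm H-\bm I_d)$, whence $\bm\phi(\sigma)=\bm\phi(0)e^{\sigma(\bm H-\bm I_d)}$. By Assumption \ref{assumption4.1} the eigenvalue $0$ of $\bm H-\bm I_d$ is simple, with left null vector $\bm v$ and right null vector $\bm u^{\rm t}$, and every other eigenvalue has strictly negative real part; hence $\sup_{\sigma\ge 0}\|e^{\sigma(\bm H-\bm I_d)}\|<\infty$ and $e^{\sigma(\bm H-\bm I_d)}\to\Pi:=\bm u^{\rm t}\bm v$ as $\sigma\to\infty$. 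In particular $\bm\phi$ is bounded, and $|\bm\phi|\ge 1/\max_k u_k>0$ since $\bm\phi\bm u^{\rm t}=1$. Substituting $|\bm\theta|=w|\bm\phi|$ into $\dot w=1/|\bm\phi|-w$, with $1/|\bm\phi|$ bounded above and below, shows $w(t)$ stays in a fixed positive interval, so $|\bm\theta(t)|$ is bounded above and therefore $\sigma(t)\to\infty$. Consequently $\bm\phi(\sigma(t))\to\bm\phi(0)\Pi=(\bm\phi(0)\bm u^{\rm t})\bm v=\bm v$, so $|\bm\phi(t)|\to 1$, and the scalar equation $\dot w=1/|\bm\phi(t)|-w$ with forcing tending to $1$ gives $w(t)\to 1$ by variation of constants; hence $\bm\theta(t)=w(t)\bm\phi(t)\to\bm v$, which is property (ii). For (iii) I would run the same identities quantitatively: $\|\bm\theta_0-\bm v\|\le\delta$ forces $|w(0)-1|$ and $\|\bm\phi(0)-\bm v\|$ to be $O(\delta)$, and since $\bm v\,e^{\sigma(\bm H-\bm I_d)}=\bm v$ we get $\|\bm\phi(\sigma)-\bm v\|\le C\|\bm\phi(0)-\bm v\|$ uniformly in $\sigma\ge 0$; the scalar comparison for $w$ then keeps $w(t)$ within $O(\delta)$ of $1$ for all $t$, so $\bm\theta(t)$ stays within $\epsilon$ of $\bm v$ for every $t\ge 0$.

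The only non‑routine point — the main obstacle — is spotting the two changes of variable, $\bm\theta\mapsto\bm\theta/(\bm\theta\bm u^{\rm t})$ together with $d\sigma=dt/|\bm\theta|$, which simultaneously remove the singular denominator $|\bm\theta|$ and render the equation autonomous and linear; after that the argument is elementary linear‑ODE asymptotics plus scalar comparison, and the mild non‑smoothness of $\bm\theta\mapsto|\bm\theta|$ is harmless because $|\bm\theta(\cdot)|$ enters only as a positive continuous scalar function along the trajectory. A secondary technical point is the uniform‑in‑$t$ upper bound for $|\bm\theta(t)|$ needed to force $\sigma(t)\to\infty$; this is obtained, as indicated, by bootstrapping the boundedness of $\bm\phi$ through the scalar equation for $w$.
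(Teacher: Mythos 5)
Your proposal is correct and follows essentially the same route as the paper's own proof: the time change $\sigma(t)=\int_0^t ds/|\bm\theta(s)|$ is exactly the paper's $f(s)$, your normalisation $\bm\phi=\bm\theta/(\bm\theta\bm u^{\rm t})$ reproduces the paper's identity $\frac{\bm\theta(s)}{\bm\theta(s)\bm u^{\rm t}}-\bm v=\big(\frac{\bm\theta_0}{\bm\theta_0\bm u^{\rm t}}-\bm v\big)\exp\{-f(s)(\bm I_d-\widetilde{\bm H})\}$, and the concluding scalar variation-of-constants for $w=\bm\theta\bm u^{\rm t}$ with forcing tending to $1$ is the paper's argument with $c(s)$. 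The only cosmetic difference is that the paper deflates the Perron eigenvalue via $\widetilde{\bm H}=\bm H-\bm u^{\rm t}\bm v$ and bounds $|\bm\theta(s)|$ directly from the ODE, whereas you work with $e^{\sigma(\bm H-\bm I_d)}$ and bootstrap boundedness through $\bm\phi$ and $w$; both are sound.
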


\begin{proof} We need to verify   (i)-(iii) in Theorem \ref{thODE2}(b).  Suppose that $\bm\theta(0)=\bm \theta_0\in \Theta$.  By (ODE2), we have
\begin{equation}\label{eqproofODE1.1}\dot{\bm\theta}\bm u^{\rm t}=-\bm\theta\bm u^{\rm t} \Big(1-\frac{1}{|\bm \theta|}\big), and  \end{equation}
\begin{equation}\label{eqproofODE1.2} \dot{\bm\theta}(\bm I_d-\bm u^{\rm t}\bm v)=-\bm\theta(\bm I_d-\bm u^{\rm t}\bm v)
\Big(\bm I_d-\frac{\widetilde{\bm H}}{|\bm \theta|}\Big),
\end{equation}
where $\widetilde{\bm H}=\bm H-\bm u^{\rm t}\bm v$. Note that the eigenvalues of $\widetilde{\bm H}$ are $0,\lambda_2,\cdots,\lambda_t$.  Let
$$f(s)=\int_0^s\frac{1}{|\bm \theta(u)|} du. $$
Then, from (\ref{eqproofODE1.1}) and (\ref{eqproofODE1.2}), it follows that for all $s\ge 0$
\begin{equation}\label{eqproofODE1.3}\bm\theta(s)\bm u^{\rm t}=\bm\theta_0\bm u^{\rm t}\exp\{-(s-f(s))\}, \; \text{ and} \end{equation}
\begin{equation}\label{eqproofODE1.4} \bm\theta (s)(\bm I_d-\bm u^{\rm t}\bm v)=\bm\theta_0(\bm I_d-\bm u^{\rm t}\bm v)\exp\{-(s-f(s))\}\exp\{-f(s)(\bm I_d-\widetilde{\bm H})\}.
\end{equation}

Note that $|\dot{\theta_k}+\theta_k|\le \max_{i,j}|H_{i,j}|$ by (ODE2). It follows that $|\theta_k(s)|\le \max_{i,j}|H_{i,j}|+|\theta_{0,k}|$, which means that $\bm \theta(s)$ is bounded. Hence,
$$ 0<f(s)\to f(+\infty)=\int_0^{\infty} \frac{1}{|\bm\theta(u)|}du =+\infty\;\; \text{ as } \; s\to +\infty. $$
Note that $\bm \theta(s)\bm u^{\rm t}$ is bounded and $\bm \theta_0\bm u^{\rm t}>0$.  By (\ref{eqproofODE1.3}), it follows that
$0<f(s)<\infty$ for all $s>0$. Hence, $\bm \theta(s)\bm u^{\rm t}>0$ for all $s\ge 0$  by (\ref{eqproofODE1.3}) again. Thus, (i) is proved.

  By (\ref{eqproofODE1.3}) and (\ref{eqproofODE1.4}),
\begin{equation}\label{eqproofODE1.5} \big(\frac{\bm\theta (s)}{\bm\theta (s) \bm u^{\rm t}}-\bm v\big)=\big(\frac{\bm\theta_0}{\bm\theta_0 \bm u^{\rm t}}-\bm v\big)\exp\{-f(s)(\bm I_d-\widetilde{\bm H})\}\to \bm 0\;\; \text{ as }\; s\to \infty,
\end{equation}
because all eigenvalues $-1, \lambda_2-1,\cdots, \lambda_t-1$ of $-(\bm I_d-\widetilde{\bm H})$ have negative real parts.
Note that $\bm v>0$ and $\bm \theta(s)\bm u^{\rm t}>0$. By (\ref{eqproofODE1.5}), $\bm\theta(s)>0$ and so $|\bm \theta(s)|=\sum_{k=1}^d \theta_s(s)$ for an $s$ that is large enough. Hence, from (\ref{eqproofODE1.5}), it follows that
$\frac{|\bm\theta(s)|}{\bm \theta(s)\bm u^{\rm t}}\to 1$ as $s\to \infty$. Write $c(s)=\frac{\bm \theta(s)\bm u^{\rm t}}{|\bm\theta(s)|}- 1$. Then, by (\ref{eqproofODE1.1}), we have
$$ \dot{\bm\theta}\bm u^{\rm t}=-(\bm\theta\bm u^{\rm t}-1)+c(s)\;\; \text{ and } \; c(s)\to 0\;\; \text{ as } s\to +\infty. $$
It follows that
$$\bm\theta(s)\bm u^{\rm t}-1=(\bm\theta_0\bm u^{\rm t}-1)e^{-s}+e^{-s}\int_0^sc(u)e^{u}du\to 0\; \text{ as } s\to +\infty, $$
which, together with (\ref{eqproofODE1.5}), implies that
$$\bm \theta(s)=\bm v\bm u^{\rm t}\bm\theta(s)+o(1)\to \bm v\;\; \text{ as } s\to \infty. $$
Thus, (ii) is proved.

For (iii), we denote the solution of   (ODE2) with the initial value $\bm \theta_0\in \Theta$ by $\bm\theta(\bm \theta_0, s)$.  By (\ref{eqproofODE1.5}),
\begin{align*}
\frac{\bm\theta (\bm\theta_0, s)}{\bm\theta (\bm\theta_0,s) \bm u^{\rm t}}-\bm v =&\Big(\frac{\bm\theta_0}{\bm\theta_0 \bm u^{\rm t}}-\bm v\Big)\exp\big\{-f(s)(\bm I_d-\widetilde{\bm H})\big\} \\
=&(\bm \theta_0-\bm v)\Big(\bm I_d-\frac{\bm u^{\rm t}\bm \theta_0}{ \bm \theta_0\bm u^{\rm t}}\Big)\exp\big\{-f(s)(\bm I_d-\widetilde{\bm H})\big\}, \;\; s<\infty.
\end{align*}
Because all of the eigenvalues of $-(\bm I_d-\widetilde{\bm H})$ have negative real parts, there exists a constant $c_0>0$ such that
$ \big\|\exp\{-x(\bm I_d-\widetilde{\bm H})\}\big\|\le c_0$  for all  $ x\ge 0$.
It follows that
\begin{equation}\label{eqproofODE1.10}
\sup_{s\ge 0}\Big\|\frac{\bm\theta (\bm\theta_0, s)}{\bm\theta (\bm\theta_0,s) \bm u^{\rm t}}-\bm v \Big\|\le
C \|\bm\theta_0-\bm v\|\to 0\;\;\text{ as }\; \bm\theta_0\to \bm v.
\end{equation}
Write $c(\bm\theta_0,s)=\frac{\bm \theta(\bm\theta_0,s)\bm u^{\rm t}}{|\bm\theta(\bm\theta_0, s)|}- 1$. Then,
$c(\bm\theta_0,s)\to 0$ uniformly in $ s\ge 0$ as $ \bm\theta_0\to \bm v$
by (\ref{eqproofODE1.10}). Note that
$$  \bm\theta(\bm\theta_0,s)\bm u^{\rm t}-1=(\bm\theta_0\bm u^{\rm t}-1)e^{-s}+e^{-s}\int_0^sc(\bm\theta_0, u)e^{u}du. $$
It follows that
\begin{equation}\label{eqproofODE1.11}\sup_{s\ge 0}|\bm\theta(\bm\theta_0,s)\bm u^{\rm t}-1|\le |\bm\theta_0\bm u^{\rm t}-1|
+\sup_{s\ge 0}|c(\bm\theta_0,s)|.
\end{equation}
Hence,
\begin{align*}\bm\theta(\bm\theta_0,s)-\bm v=& \bm\theta (\bm\theta_0,s) \bm u^{\rm t}\Big(\frac{\bm\theta (\bm\theta_0, s)}{\bm\theta (\bm\theta_0,s) \bm u^{\rm t}}-\bm v \Big)+(\bm\theta (\bm\theta_0,s) \bm u^{\rm t}-1)\bm v\\
&\to 0\;   \text{ uniformly in } s\ge 0 \text{ as } \bm\theta_0\to \bm v
\end{align*}
by (\ref{eqproofODE1.10}) and (\ref{eqproofODE1.11}). Thus, (iii) is proved.
\end{proof}


\section{Basic results for  matrices and martingales}
 \setcounter{equation}{0}

\begin{proposition}\label{lemma1} Let $\{\bm H_n\}$ be a sequence of real matrices and $\bm H=D\bm h(\bm \theta^{\ast})$. Write $\bm \Pi_m^n=\prod_{j=m+1}^n \big(\bm I_d-\frac{\bm H_j}{j}\big)$  and $\widetilde{\bm \Pi}_m^n=\prod_{j=m+1}^n \big(\bm I_d-\frac{\bm H}{j}\big)$ for all $1\le m\le n-1$. Then,
\begin{description}
  \item[(i)] $\|\widetilde{\bm \Pi}_m^n\|\le C_0\big(\frac{n}{m}\big)^{-\rho}\log^{\nu-1}\frac{n}{m}\le C_{\delta}\big(\frac{n}{m}\big)^{-\rho+\delta}$ for all $\delta>0$;
  \item[(ii)] If $\bm H_n\to  \bm H$ as $n\to \infty$, then for all $\delta>0$,  $\|\bm \Pi_m^n \|\le  C_{\delta}\big(\frac{n}{m}\big)^{-\rho+\delta}$ and
      $$\bm \Pi_m^n-\widetilde{\bm \Pi}_m^n=o(1)\big(\frac{n}{m}\big)^{-\rho+\delta} \text{ as } n\ge m\to \infty; $$
  \item[(iii)]  If $\sum_{j=1}^{\infty}\frac{\|\bm H_j- \bm H\|}{j}(\log j)^{\nu-1}<\infty$, then   $\|\bm \Pi_m^n \|\le  C\big(\frac{n}{m}\big)^{-\rho}\log^{\nu-1}\frac{n}{m}$, and
      $$\bm \Pi_m^n-\widetilde{\bm \Pi}_m^n=o(1)\big(\frac{n}{m}\big)^{-\rho}\log^{\nu-1}\frac{n}{m} \text{ as } n\ge m\to \infty; $$
  \item[(iv)] $\max\limits_{x\in [m-c,m+c]}\|\widetilde{\bm \Pi}_m^n-\big(\frac{n}{x}\big)^{-\bm H}\|=o(1)\big(\frac{n}{m}\big)^{-\rho}\log^{\nu-1}\frac{n}{m}$ as $n\ge m\to \infty$.
\end{description}
Here, for a positive number $a$, $a^{\bm H}$ is defined as $a^{\bm H}=e^{\bm H\log a}=\sum_{j=0}^{\infty} \frac{1}{j!} (\log a)^j \bm H^j$.
\end{proposition}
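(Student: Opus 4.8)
The plan is to reduce every assertion to the behaviour of a single Jordan block of $\bm H$ via the decomposition $\bm T^{-1}\bm H\bm T=diag(\bm J_1,\dots,\bm J_s)$ with $\bm J_t=\lambda_t\bm I_{\nu_t}+\overline{\bm J}_{\nu_t}$, and then to treat the ``frozen'' quantities (i) and (iv) with the matrix logarithm and the ``perturbed'' quantities (ii) and (iii) with a discrete variation-of-constants identity combined with a bootstrap.

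For (i) and (iv): after conjugating, $\bm T^{-1}\widetilde{\bm\Pi}_m^n\bm T=diag\big(\prod_{j=m+1}^n(\bm I_{\nu_t}-\bm J_t/j)\big)_t$, and since $\bm I_{\nu_t}-\bm J_t/j=(1-\lambda_t/j)\bm I_{\nu_t}-(1/j)\overline{\bm J}_{\nu_t}$ is a polynomial in the single nilpotent matrix $\overline{\bm J}_{\nu_t}$, all factors within a block commute. Hence, for $m$ larger than $\|\bm J_t\|$ (finitely many smaller $m$ being absorbed into a bounded prefix $\prod_{j=m+1}^{\lceil\|\bm J_t\|\rceil}$), one may take logarithms term by term to get
$$\prod_{j=m+1}^n(\bm I_{\nu_t}-\bm J_t/j)=\exp\Big(\sum_{j=m+1}^n\log(\bm I_{\nu_t}-\bm J_t/j)\Big)=e^{-\bm J_t\log(n/m)}e^{\bm R^{(t)}_{m,n}},$$
where $\bm R^{(t)}_{m,n}=\sum_{j=m+1}^n\big(\log(\bm I_{\nu_t}-\bm J_t/j)+\bm J_t/j\big)-\bm J_t\big(\sum_{j=m+1}^n 1/j-\log(n/m)\big)$ commutes with $\bm J_t$ and satisfies $\|\bm R^{(t)}_{m,n}\|=O(1/m)$ uniformly in $n\ge m$, so $e^{\bm R^{(t)}_{m,n}}=\bm I_{\nu_t}+O(1/m)$. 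Using $e^{-\bm J_t u}=e^{-\lambda_t u}\sum_{l=0}^{\nu_t-1}\frac{(-u)^l}{l!}\overline{\bm J}_{\nu_t}^{\,l}$ one has $\|e^{-\bm J_t u}\|\le C(1+u)^{\nu_t-1}e^{-Re(\lambda_t)u}\le C(1+u)^{\nu-1}e^{-\rho u}$; with $u=\log(n/m)$ this yields the first bound in (i) (reading $(\log x)^{\nu-1}$ as $(1+\log x)^{\nu-1}$, which is immaterial since $n/m\to\infty$ in all applications), and the second follows from $(1+\log x)^{\nu-1}\le C_\delta x^\delta$. For (iv), if $x\in[m-c,m+c]$ then $\log(n/x)=\log(n/m)+\log(m/x)$ with $\log(m/x)=O(1/m)$ uniformly in $x$, so per block $(n/x)^{-\bm J_t}=e^{-\bm J_t\log(n/m)}(\bm I_{\nu_t}+O(1/m))$; subtracting the expansion above, each block difference equals $e^{-\bm J_t\log(n/m)}\cdot O(1/m)$, and bounding $\|e^{-\bm J_t\log(n/m)}\|$ as before (using $Re(\lambda_t)\ge\rho$, and $\nu_t\le\nu$ when $Re(\lambda_t)=\rho$) gives (iv) after recombining blocks.

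For (ii): for fixed $\delta>0$ pass to a Jordan-type basis in which the superdiagonal entries of the nilpotent part are $\delta/4$ rather than $1$; in the associated norm $\|\cdot\|_\ast$ one has $\|\bm I_d-\bm H/j\|_\ast\le 1-(\rho-\delta/2)/j$ for large $j$, hence $\|\bm I_d-\bm H_j/j\|_\ast\le 1-(\rho-\delta)/j$ eventually because $\bm H_j\to\bm H$; multiplying and returning to the Euclidean norm yields the a priori bound $\|\bm\Pi_m^n\|\le C_\delta(n/m)^{-\rho+\delta}$ for every $\delta>0$ (small $m$ handled by a bounded prefix). The telescoping identity
$$\bm\Pi_m^n-\widetilde{\bm\Pi}_m^n=-\sum_{k=m+1}^n\bm\Pi_m^{k-1}\,\frac{\bm H_k-\bm H}{k}\,\widetilde{\bm\Pi}_k^n,$$
together with $\|\bm\Pi_m^{k-1}\|\le C_\delta(k/m)^{-\rho+\delta/2}$ and $\|\widetilde{\bm\Pi}_k^n\|\le C_\delta(n/k)^{-\rho+\delta/2}$ (whose product is $C_\delta(n/m)^{-\rho+\delta/2}$), gives $\|\bm\Pi_m^n-\widetilde{\bm\Pi}_m^n\|\le C_\delta(n/m)^{-\rho+\delta/2}\sum_{k=m+1}^n\|\bm H_k-\bm H\|/k$; since $\|\bm H_k-\bm H\|\to0$, for $m$ large the last sum is $\le\varepsilon\log(n/m)\le\varepsilon C_\delta(n/m)^{\delta/2}$, so the left side is $\le C\varepsilon(n/m)^{-\rho+\delta}$ uniformly in $n\ge m$, which is the required $o(1)(n/m)^{-\rho+\delta}$.

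For (iii): this is the delicate point, since the exact power $\log^{\nu-1}$ must be preserved. Put $L(x)=(1+\log x)^{\nu-1}$; by (i), $\|\widetilde{\bm\Pi}_k^n\|\le C_0(n/k)^{-\rho}L(n/k)\le C_0(n/k)^{-\rho}L(n/m)$ because $n/k\le n/m$, and note the elementary inequality $L(k/m)\le L(k)$ for $m\ge1$. Feeding these into the telescoping identity and setting $\Phi_N(m):=\sup_{m\le l\le N}\|\bm\Pi_m^l\|/\big((l/m)^{-\rho}L(l/m)\big)$, which is finite for each finite $N$, one obtains $\Phi_N(m)\le C\big(1+\Phi_N(m)\,\varepsilon_m\big)$ with $\varepsilon_m:=\sum_{k>m}\|\bm H_k-\bm H\|L(k)/k$, and $\varepsilon_m\to0$ by the hypothesis $\sum_k\|\bm H_k-\bm H\|(\log k)^{\nu-1}/k<\infty$; for $m\ge m_0$ with $C\varepsilon_m\le1/2$ this forces $\Phi_N(m)\le 2C$, and letting $N\to\infty$ gives $\|\bm\Pi_m^n\|\le 2C(n/m)^{-\rho}L(n/m)$ (small $m$ via a bounded prefix). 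Inserting this improved bound into the telescoping identity once more yields $\|\bm\Pi_m^n-\widetilde{\bm\Pi}_m^n\|\le C(n/m)^{-\rho}L(n/m)\sum_{k=m+1}^n\|\bm H_k-\bm H\|L(k)/k\le C\varepsilon_m(n/m)^{-\rho}L(n/m)=o(1)(n/m)^{-\rho}L(n/m)$. The main obstacle is exactly this last step: a naive estimate produces a product of logarithmic factors $L(k/m)L(n/k)$ and hence an excess power of $\log$; the way out is the splitting $L(k/m)L(n/k)\le L(k)L(n/m)$ combined with the summability of $\|\bm H_k-\bm H\|(\log k)^{\nu-1}/k$, which is precisely why that hypothesis is imposed. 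The matrix-logarithm expansion behind (i) and the adapted-norm trick that controls the $\delta$-loss in (ii) are the remaining points needing care.
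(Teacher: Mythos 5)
Your proof is correct, and while it shares the central perturbation identity $\bm \Pi_m^n-\widetilde{\bm \Pi}_m^n=\sum_{k}\bm \Pi_m^{k}\frac{\bm H-\bm H_{k+1}}{k+1}\widetilde{\bm \Pi}_{k+1}^n$ with the paper, it differs in most of the surrounding machinery. The paper simply cites (i) from Hu and Zhang (2004), whereas you prove it from scratch via the commuting matrix-logarithm expansion within each Jordan block; this also lets you prove (iv) by direct comparison of $\widetilde{\bm\Pi}_m^n$ with $e^{-\bm J_t\log(n/m)}$, while the paper instead observes that $(\frac{n}{m})^{-\bm H}$ is itself a product $\prod_j\big(\bm I_d-\frac{\bm H_j'}{j}\big)$ with $\bm H_j'=\bm H+O(1/j)$ and reduces (iv) to (iii) — a slicker reduction, but one that makes (iv) depend on (iii). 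For the a priori bound on $\|\bm\Pi_m^n\|$ in (ii), the paper runs an induction on $n$ over the telescoping identity, absorbing the error for $m\ge m_\delta$; your adapted-norm argument ($\|\bm I_d-\bm H_j/j\|_\ast\le 1-(\rho-\delta)/j$ eventually, then submultiplicativity) reaches the same bound more transparently and avoids the induction entirely. In (iii) the paper again argues "similarly" by induction, while your absorption argument with $\Phi_N(m)\le C(1+\Phi_N(m)\varepsilon_m)$ makes explicit the two points the paper leaves implicit: that the sup is finite for finite $N$ so it can be absorbed, and that the splitting $L(k/m)L(n/k)\le L(k)L(n/m)$ is exactly what converts the hypothesis $\sum_k\|\bm H_k-\bm H\|(\log k)^{\nu-1}/k<\infty$ into the preservation of the sharp power $\log^{\nu-1}$. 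Your explicit caveat that $\log^{\nu-1}(n/m)$ must be read as $(1+\log(n/m))^{\nu-1}$ (otherwise the bound fails literally for $n$ close to $m$ when $\nu>1$) is a point the paper glosses over but uses implicitly. Both routes are sound; yours is more self-contained, the paper's is shorter because it outsources (i) and recycles (iii) for (iv).
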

\begin{proof} (i) can be found in Hu and Zhang (2004). For (ii), assume $\|\widetilde{\bm \Pi}_m^n\|\le C_{\delta}\big(\frac{n}{m}\big)^{-\rho+\delta}$. We show that there is a $m_{\delta}$ such that
\begin{equation}\label{eqproofLemm1.1} \|\bm \Pi_m^n \| \le 2C_{\delta}\big(\frac{n}{m}\big)^{-\rho+2\delta}
\end{equation}
  for all  $ m_{\delta}\le m\le n$.
Note that
\begin{align}\label{eqproofLemm1.2}
\bm \Pi_m^n=& \widetilde{\bm \Pi}_m^n+\sum_{k=m}^{n-1}\big(\bm \Pi_m^{k+1}  \widetilde{\bm \Pi}_{k+1}^n-\bm \Pi_m^k\widetilde{\bm \Pi}_{k}^n\big)
\nonumber\\
=& \widetilde{\bm \Pi}_m^n+\sum_{k=m}^{n-1}\bm \Pi_m^k\frac{\bm H-\bm H_{k+1}}{k+1}\widetilde{\bm \Pi}_{k+1}^n.
\end{align}
It follows that
$$ \|\bm \Pi_m^n\|\le C_{\delta} \big(\frac{n}{m}\big)^{-\rho+\delta}
+\sum_{k=m}^{n-1}\|\bm \Pi_m^k\|\frac{\|\bm H-\bm H_{k+1}\|}{k+1}C_{\delta}\big(\frac{n}{k+1}\big)^{-\rho+\delta}.
$$
We prove (\ref{eqproofLemm1.1}) by the induction.  We assume that (\ref{eqproofLemm1.1}) holds for all $ m_{\delta}\le n\le N-1$ and $ m_{\delta}\le m\le n$. Then, for $n=N$ and $ m_{\delta}\le m\le n$,
\begin{align*}
\|\bm \Pi_m^n\|\le &C_{\delta} \big(\frac{n}{m}\big)^{-\rho+\delta}+\sum_{k=m}^{n-1}2C_{\delta} \big(\frac{k}{m}\big)^{-\rho+2\delta}\frac{\|\bm H-\bm H_{k+1}\|}{k+1}C_{\delta}\big(\frac{n}{k}\big)^{-\rho+\delta}\big(\frac{k+1}{k}\big)^{\rho-\delta}
\\
\le &C_{\delta} \big(\frac{n}{m}\big)^{-\rho+\delta}+C_{\delta}^22^{1+\rho} \big(\frac{n}{m}\big)^{-\rho+2\delta}
\sum_{k=m_{\delta}}^{n}\frac{k^{\delta}\|\bm H-\bm H_{k+1}\|}{n^{\delta}(k+1)}.
\end{align*}
Note that
\begin{equation}\label{eqproofLemm1.3}
\sum_{k=m_{\delta}}^{n}\frac{k^{\delta}\|\bm H-\bm H_{k+1}\|}{n^{\delta}(k+1)}=n^{-\delta}\sum_{k=m_{\delta}}^{n}\frac{k^{\delta}o(1) }{k+1}\to 0 \;\; a.s. \text{ as } n\ge m_{\delta}\to \infty.
\end{equation}
We can choose an $m_{\delta}$ large enough that
 $$C_{\delta}^22^{1+\rho} \sum_{k=m_{\delta}}^{n}\frac{k^{\delta}\|\bm H-\bm H_{k+1}\|}{n^{\delta}(k+1)}<C_{\delta}\;\; a.s.$$
 Hence, we have
 \begin{align*}
\|\bm \Pi_m^n\|\le &C_{\delta} \big(\frac{n}{m}\big)^{-\rho+\delta}+ C_{\delta} \big(\frac{n}{m}\big)^{-\rho+2\delta}
\le  2 C_{\delta} \big(\frac{n}{m}\big)^{-\rho+2\delta}
\end{align*}
 for $m_{\delta}\le m\le n$. Thus, (\ref{eqproofLemm1.1}) is proved, and by (\ref{eqproofLemm1.2}) we have
\begin{align*}
\left\|\bm \Pi_m^n- \widetilde{\bm \Pi}_m^n\right\|\le & \sum_{k=m}^{n-1}2C_{\delta} \big(\frac{k}{m}\big)^{-\rho+2\delta}\frac{o(1)}{k+1}C_{\delta} \big(\frac{n}{k+1}\big)^{-\rho+\delta}\\
\le & \sum_{k=m}^{n-1}  \big(\frac{n}{m}\big)^{-\rho+2\delta}\frac{o(1)k^{\delta}}{n^{\delta}(k+1)} =o(1)\big(\frac{n}{m}\big)^{-\rho+2\delta}
\;\; a.s.
\end{align*}
as $n\ge m\to \infty$. The proof of (ii) is complete.

The proof of (iii) is similar if we note that
$$\big\|\sum_{k=m}^{n-1} \bm\Pi_m^k\frac{\bm H-\bm H_{k+1}}{k+1}\widetilde{\bm \Pi}_{k+1}^n\big\|
\le \sum_{k=m}^{n-1}\|\bm \Pi_m^k\|\frac{\|\bm H-\bm H_{k+1}\|}{k+1} C_0\big(\frac{n}{k+1}\big)^{-\rho}\log^{\nu -1}\frac{n}{k} $$
and
$$ \sum_{k=m}^{n-1}\frac{\|\bm H-\bm H_{k+1}\|}{k+1}(\log k)^{\nu-1} \to 0\;\;\text{as } n\ge m\to \infty. $$

For (iv), write
$  \big(\frac{j}{j-1}\big)^{\bm H}=\bm I_d-\frac{\bm H_j}{j}$, where $\bm H_j=\bm H+\frac{O(1)}{j}$.  From (iii), it follows that
$$ \widetilde{\bm\Pi}_m^n-\big(\frac{n}{m})^{-\bm H}=\widetilde{\bm \Pi}_m^n-\prod_{j=m+1}^n\big(\bm I_d-\frac{\bm H_j}{j}\big)=o(1)\big(\frac{n}{m}\big)^{-\rho}\log^{\nu-1}\frac{n}{m}. $$
The proof is completed by noting that $\max\limits_{x\in [m-c,m+c] }\big\|\big(\frac{m}{x}\big)^{-\bm H}-\bm I_d\|=O\big(\frac{1}{m}\big)$.
\end{proof}

\begin{proposition} \label{prop2.1} Suppose that Assumption 2.3 is satisfied, i.e.,
 \begin{equation} \label{eqAssump2.2} \frac{1}{n}\sum_{m=1}^n\ep\left[\|\Delta \bm M_m\|^2\mathbb I\{\|\Delta \bm M_m\|\ge \epsilon \sqrt{n}\}\big|\mathscr{F}_{m-1}\right]\to 0\;\; a.s.\; \text{ or in } L_1,\;\; \forall \epsilon>0
\end{equation}
\begin{equation} \label{eqAssump2.3}
\text{ and }\;\; \frac{1}{n}\sum_{m=1}^n \ep\left[(\Delta \bm M_m)^{\rm t} \Delta \bm M_m\big|\mathscr{F}_{m-1}\right]\to \bm \Gamma
\;\; a.s. \; \text{ or in } L_1,
\end{equation}
where $\bm \Gamma$ is a  symmetric positive semidefinite random matrix.

 Write $\bm H=D\bm h(\bm\theta^{\ast})$, $\bm \Pi_m^n=\prod_{j=m+1}^n \big(\bm I_d-\frac{\bm H_j}{j}\big)$  and $\widetilde{\bm \Pi}_m^n=\prod_{j=m+1}^n \big(\bm I_d-\frac{\bm H}{j}\big)$ and
$$\bm \zeta_n =\sum_{m=1}^n \frac{\Delta \bm M_m}{m}\widetilde{\bm \Pi}_m^n. $$
\begin{description}
 \item [(i)]
If $\rho>1/2$ and $\bm H_n\to \bm H$ a.s., then
\begin{equation}\label{eqprop2.1.1} \sqrt{n}\sum_{m=1}^n \frac{\Delta \bm M_m}{m} \bm \Pi_m^n-\bm \zeta_n\to \bm 0 \; \text{ in probability},
\end{equation}
$$ \sqrt{n}\bm\zeta_n \overset{D}\to N(\bm 0,\bm \Sigma)\;(\text{stably}),$$
where
$$ \bm \Sigma=\int_0^{\infty}\big(e^{-( \bm H-\bm I_d/2)u})^{\rm t}\bm\Gamma e^{-( \bm H-\bm I_d/2)u}du. $$
\item[(ii)]
If $\rho=1/2$, then
$$ \frac{\sqrt{n}}{(\log n)^{\nu-1/2}}\bm\zeta_n \overset{D}\to N(\bm 0,\widetilde{\bm \Sigma})\;(\text{stably}),$$
where
$$ \widetilde{\bm \Sigma}=\lim_{n\to \infty} \frac{1}{(\log n)^{2\nu-1}}\int_0^{\log n}\big(e^{-( \bm H-\bm I_d/2)u})^{\rm t}\bm\Gamma e^{-( \bm H-\bm I_d/2)u}du $$
satisfies (2.9), i.e.,
\begin{equation}\label{Alimitvaraince}
 ({\bm T^{\star}}^{\rm t}\widetilde{\bm \Sigma} \bm T)_{ij} =
\frac 1{((\nu-1)!)^2} \frac 1{2\nu-1}
     \bm t_{a 1}^{\star}\bm \Gamma\bm t_{b 1}^{\rm t},  \end{equation}
whenever $i=\nu_1+ \cdots+\nu_a $, $j=\nu_1+\cdots+\nu_{b}$ and $\lambda_a=\lambda_{b}$, $Re(\lambda_a)=1/2$,
$\nu_a=\nu_b=\nu$ and $ ({\bm T^{\star}}^{\rm t}\widetilde{\bm \Sigma} \bm
T)_{ij}=0$   otherwise. Here, $ \bm x^{\star} $ is the
conjugate vector of a complex   vector $\bm x$ and $\bm t_{a1}^{\rm t}$ is the first column vector of the $a$-th block in $\bm T=[ \cdots,  \bm t_{a1}^{\rm t},\cdots,\bm t_{a\nu_a}^{\rm t},\cdots]$.
 Further, let $\bm r_{a\nu_a}$ be the last row vector of the $a$-th block in $\bm T^{-1}=[\cdots,\bm r_{a1}^{\rm t},\cdots, \bm r_{a\nu_a}^{\rm t},\cdots]^{\rm t}$. Then, $\bm r_{a\nu_a}$ and $\bm t_{a1}^{\rm t}$ are respectively the left and right eigenvectors of $\bm H$ with respect to the eigenvalue $\lambda_a$, and
 $$  \widetilde{\bm \Sigma}  =
\frac 1{((\nu-1)!)^2} \frac 1{2\nu-1} \sum_{a,b: \lambda_a=\lambda_{b}, Re(\lambda_a)=1/2,
\nu_a=\nu_b=\nu}
     (\bm r_{a\nu_a}^{\rm t}\bm t_{a 1})^{\star}\bm \Gamma(\bm t_{b 1}^{\rm t} \bm r_{b\nu_n}). $$
\end{description}
\end{proposition}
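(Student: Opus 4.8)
The plan is to derive both parts from the central limit theorem for arrays of martingale differences (Corollary 3.1 of Hall and Heyde (1980)), applied to $\big\{c_n\frac{\Delta\bm M_m}{m}\widetilde{\bm\Pi}_m^n;\ 1\le m\le n\big\}$, where $c_n=\sqrt n$ in part (i) and $c_n=\sqrt n/(\log n)^{\nu-1/2}$ in part (ii). Because the weights $\widetilde{\bm\Pi}_m^n$ are deterministic, this array consists of honest martingale differences for the fixed filtration $(\mathscr{F}_m)$, so the nesting hypothesis is automatic and — since $\bm\Gamma$ is allowed to be random — the conclusion will be stable (mixing) convergence to $N(\bm 0,\bm\Sigma)$, respectively $N(\bm 0,\widetilde{\bm\Sigma})$. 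In part (i) there is one preliminary reduction, namely (\ref{eqprop2.1.1}): one must first show that the random weights $\bm\Pi_m^n$ can be traded for $\widetilde{\bm\Pi}_m^n$.

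For (\ref{eqprop2.1.1}) I would compare recursions. Writing $\bm S_n=\sum_m\frac{\Delta\bm M_m}{m}\bm\Pi_m^n$ and $\bm\zeta_n=\sum_m\frac{\Delta\bm M_m}{m}\widetilde{\bm\Pi}_m^n$, each satisfies a recursion of the form (\ref{eqModel}) with no remainder — $\bm S_n=\bm S_{n-1}(\bm I_d-\bm H_n/n)+\frac{\Delta\bm M_n}{n}$ and $\bm\zeta_n=\bm\zeta_{n-1}(\bm I_d-\bm H/n)+\frac{\Delta\bm M_n}{n}$ — so the difference $\bm D_n=\bm S_n-\bm\zeta_n$ obeys $\bm D_n=\bm D_{n-1}(\bm I_d-\bm H_n/n)-\frac{\bm\zeta_{n-1}(\bm H_n-\bm H)}{n}$, whence $\bm D_n=\bm D_0\bm\Pi_0^n-\sum_{m=1}^n\frac{\bm\zeta_{m-1}(\bm H_m-\bm H)}{m}\bm\Pi_m^n$. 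I would then combine the a priori estimate $\ep\|\bm\zeta_m\|^2=O(m^{-1})$ — which follows from Assumption~\ref{assump2}, since $\sum_{j\le m}\ep[\|\Delta\bm M_j\|^2|\mathscr{F}_{j-1}]=O(m)$, $\|\widetilde{\bm\Pi}_j^m\|\le C_0(m/j)^{-\rho}\log^{\nu-1}(m/j)$ and $\rho>1/2$, the $\widetilde{\bm\Pi}$'s being deterministic so that the cross terms vanish — with the bound $\|\bm\Pi_m^n\|\le C_\delta(n/m)^{-\rho+\delta}$ of Proposition~\ref{lemma1}(ii) and a Toeplitz/Kronecker argument that uses only $\|\bm H_m-\bm H\|\to0$, to conclude $\sqrt n\,\bm D_n\to\bm 0$ in probability. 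Squeezing the merely qualitative convergence $\bm H_m\to\bm H$ through the slowly decaying weights here is the most delicate step, and the place where I expect to have to work hardest.

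Turning to the array itself: Lindeberg's condition for $\{c_n\frac{\Delta\bm M_m}{m}\widetilde{\bm\Pi}_m^n\}$ is inherited from the Lindeberg condition in Assumption~\ref{assump2}, since $\|\widetilde{\bm\Pi}_m^n\|\le C_\delta(n/m)^{-\rho+\delta}$ gives $c_n^2\sum_m m^{-2}\|\widetilde{\bm\Pi}_m^n\|^2=O(1)$, so that the truncated conditional second moments of the array are dominated by those of $\|\Delta\bm M_m\|^2$. The substantive point is the conditional variance,
$$ \bm V_n:=c_n^2\sum_{m=1}^n\frac1{m^2}(\widetilde{\bm\Pi}_m^n)^{\rm t}\,\ep\big[(\Delta\bm M_m)^{\rm t}\Delta\bm M_m\,\big|\,\mathscr{F}_{m-1}\big]\,\widetilde{\bm\Pi}_m^n\ \longrightarrow\ \bm\Sigma\ \ (\text{resp. }\widetilde{\bm\Sigma}). $$
Setting $\bm\Lambda_m=\ep[(\Delta\bm M_m)^{\rm t}\Delta\bm M_m|\mathscr{F}_{m-1}]$, the Ces\`aro convergence $\sum_{m\le k}\bm\Lambda_m=k\bm\Gamma+o(k)$ from Assumption~\ref{assump2} is pushed through the slowly varying matrix weights $\frac1{m^2}(\widetilde{\bm\Pi}_m^n)^{\rm t}(\,\cdot\,)\widetilde{\bm\Pi}_m^n$ by summation by parts, replacing $\bm\Lambda_m$ by $\bm\Gamma$; Proposition~\ref{lemma1}(iv) then replaces $\widetilde{\bm\Pi}_m^n$ by $(n/m)^{-\bm H}$; and the substitution $u=\log(n/m)$, under which $\frac n{m^2}|dm|=e^u\,du$ and $e^{u}\,((n/m)^{-\bm H})^{\rm t}\bm\Gamma(n/m)^{-\bm H}=(e^{-(\bm H-\bm I_d/2)u})^{\rm t}\bm\Gamma\,e^{-(\bm H-\bm I_d/2)u}$, turns $\bm V_n$ into a Riemann approximation of $\int_0^{\log n}(e^{-(\bm H-\bm I_d/2)u})^{\rm t}\bm\Gamma\,e^{-(\bm H-\bm I_d/2)u}\,du$. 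When $\rho>1/2$ all eigenvalues of $-(\bm H-\bm I_d/2)$ have negative real part, the integral converges, and $\bm V_n\to\bm\Sigma$; when $\rho=1/2$ it grows like $(\log n)^{2\nu-1}$ and, after the normalization in $c_n$, $\bm V_n\to\widetilde{\bm\Sigma}$. Corollary 3.1 of Hall and Heyde then delivers (\ref{eqprop2.1.1}) and the two CLTs.

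It remains to put $\widetilde{\bm\Sigma}$ in the explicit Jordan form stated in (\ref{Alimitvaraince}). In the basis $\bm T$, the matrix $e^{-(\bm H-\bm I_d/2)u}$ is block diagonal with $a$-th block $e^{-(\lambda_a-1/2)u}\sum_{j=0}^{\nu_a-1}\frac{(-u)^j}{j!}\overline{\bm J}_{\nu_a}^{\,j}$, so an entry of ${\bm T^{\star}}^{\rm t}\big(\int_0^U(\,\cdot\,)\,du\big)\bm T$ indexed by positions in blocks $a$ and $b$ is a finite linear combination of integrals $\int_0^U u^{\,l}\,e^{-(\overline{\lambda_a}+\lambda_b-1)u}\,du$ with $l\le\nu_a+\nu_b-2$. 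As $U=\log n\to\infty$ and after division by $U^{2\nu-1}$, the only surviving terms are those with $Re(\lambda_a)=Re(\lambda_b)=1/2$, $\lambda_a=\lambda_b$, $\nu_a=\nu_b=\nu$ and $l=2\nu-2$; these sit at the last-row/first-column slots of the blocks and contribute $\frac1{((\nu-1)!)^2}\frac1{2\nu-1}\bm t_{a1}^{\star}\bm\Gamma\bm t_{b1}^{\rm t}$, which is exactly (\ref{Alimitvaraince}). Reading $\bm H\bm t_{a1}^{\rm t}=\lambda_a\bm t_{a1}^{\rm t}$ and $\bm r_{a\nu_a}\bm H=\lambda_a\bm r_{a\nu_a}$ off $\bm H\bm T=\bm T\,diag(\bm J_t)$ and $\bm T^{-1}\bm H=diag(\bm J_t)\bm T^{-1}$ identifies $\bm t_{a1}^{\rm t}$ and $\bm r_{a\nu_a}$ as the right and left eigenvectors for $\lambda_a$; substituting $\widetilde{\bm\Sigma}$ back in terms of $\bm T^{-1}$ and its conjugate and using the biorthogonality of the rows of $\bm T^{-1}$ against the columns of $\bm T$ then yields the displayed sum over eigenvectors. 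This last step is lengthy but entirely routine linear algebra; the genuine obstacles are the weight replacement of the second paragraph and pushing the Ces\`aro-only control of the conditional second moments through the $m$-dependent weights in $\bm V_n$.
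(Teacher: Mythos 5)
Your proposal is correct, and for the central limit theorem itself it follows the paper's route exactly: both arguments apply Corollary 3.1 of Hall and Heyde to the array $\big\{b_n\frac{\Delta\bm M_m}{m}\widetilde{\bm \Pi}_m^n\big\}$, verify the Lindeberg condition by passing from the threshold $\epsilon\sqrt{n}$ to $\epsilon\sqrt{m}$ and absorbing the weights $\frac{1}{m}\widetilde{\bm \Pi}_m^n$ by summation by parts, compute the conditional variance by Abel summation against $\sum_{j\le m}\ep\big[(\Delta\bm M_j)^{\rm t}\Delta\bm M_j\big|\mathscr{F}_{j-1}\big]-m\bm\Gamma$ followed by Proposition \ref{lemma1}(iv) and the substitution $u=\log(n/y)$, and then extract (\ref{Alimitvaraince}) from the Jordan-block expansion of $e^{-(\bm H-\bm I_d/2)u}$ in precisely the way you describe. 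The one place you genuinely depart from the paper is the weight-replacement step (\ref{eqprop2.1.1}). The paper Abel-sums both weighted sums so that their difference is expressed through $\frac{\bm M_m}{m(m+1)}\big[(\bm I_d-\bm H)(\bm \Pi_{m+1}^n-\widetilde{\bm \Pi}_{m+1}^n)-(\bm H_{m+1}-\bm H)\bm \Pi_{m+1}^n\big]$, and then invokes the comparison estimate $\bm \Pi_m^n-\widetilde{\bm \Pi}_m^n=o(1)(n/m)^{-\rho+\delta}$ of Proposition \ref{lemma1}(ii) together with $\bm M_m=O_{L_1}(m^{1/2})$. You instead subtract the one-step recursions to get $\bm S_n-\bm\zeta_n=-\sum_{m\le n}\frac{\bm\zeta_{m-1}(\bm H_m-\bm H)}{m}\bm \Pi_m^n$, which needs only the norm bound on $\bm \Pi_m^n$ and $\ep\|\bm\zeta_m\|^2=O(1/m)$, so you bypass the second assertion of Proposition \ref{lemma1}(ii) entirely; the cost is the same in both versions, namely interleaving the almost-sure $o(1)$ coming from $\bm H_m\to\bm H$ with $L_1$ bounds, which is handled by restricting to an event of probability at least $1-\eta$ on which the random constants are bounded and $\sup_{m\ge M}\|\bm H_m-\bm H\|$ is small, and splitting the sum at $M$. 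Both arguments close, and yours is, if anything, slightly more economical on this step.
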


\begin{proof} Without loss of generality, we assume that (\ref{eqAssump2.2}) and (\ref{eqAssump2.3}) hold in $L_1$. We have that
\begin{align*}\sum_{m=1}^n \frac{\Delta\bm M_m}{m}\bm \Pi_m^n= &\frac{\bm M_n}{n}\bm \Pi_n^n+\sum_{m=1}^{n-1}\bm M_m\frac{\bm I_d-\bm H_{m+1}}{m(m+1)}\bm \Pi_{m+1}^n,\\
\sum_{m=1}^n \frac{\Delta\bm M_m}{m}\widetilde{\bm \Pi}_m^n= &\frac{\bm M_n}{n}\widetilde{\bm \Pi}_n^n+\sum_{m=1}^{n-1}\bm M_m\frac{\bm I_d-\bm H}{m(m+1)}\widetilde{\bm \Pi}_{m+1}^n.
\end{align*}
When $\rho>1/2$, we choose $\delta>0$ such that $\rho-\delta>1/2$.  By Proposition \ref{lemma1} (ii) it follows that
\begin{align*}
&\Big\|\sum_{m=1}^n \frac{\Delta\bm M_m}{m}\big(\bm \Pi_m^n-\widetilde{\bm \Pi}_m^n\big)\Big\| \\
=& \Big\|\sum_{m=1}^{n-1} \frac{ \bm M_m}{m(m+1)}\big[(\bm I_d-\bm H)\big(\bm \Pi_{m+1}^n-\widetilde{\bm \Pi}_{m+1}^n\big)-(\bm H_{m+1}-\bm H)\bm \Pi_{m+1}^n\big]\Big\|\\
\le & \sum_{m=1}^{n-1} \frac{ \|\bm M_m\|}{m^2}o_{a.s.}(1) \big(\frac{n}{m}\big)^{-\rho+\delta}
=o_{a.s.}(1)\sum_{m=1}^{n-1} \frac{ O_{L_1}(m^{1/2})|}{m^2}  \big(\frac{n}{m}\big)^{-\rho+\delta}\\
=&o_P(1)n^{-\rho+\delta}\sum_{m=1}^{n-1}m^{-(3/2-\rho+\delta)}=o_P(n^{-1/2}).
\end{align*}
Thus, (\ref{eqprop2.1.1}) is  proved.

 Now, write
$ \bm \zeta_{n,m}=  \frac{\Delta \bm M_m}{m}\widetilde{\bm \Pi}_m^n, $
and $b_n=\sqrt{n}$ if $\rho>1/2$, $b_n=\sqrt{n}/(\log n)^{\nu-1/2}$ if $\rho=1/2$.
Then, $\{\bm\zeta_{n,m}; m=1,\cdots, n\}$ is an array of martingale differences. By  Corollary 3.1 of Hall and Heyde (1980, p. 58), it is sufficient to show that
\begin{align}\label{eqproofprop1.2}
b_n^2\sum_{m=1}^n \ep\left[  \|\bm \zeta_{n,m}\|^2\mathbb I\{b_n\|\bm \zeta_{n,m}\|\ge \epsilon\}\big|\mathscr{F}_{m-1}\right]\overset{P}\to 0, \; \text{ and} \\
 \label{eqproofprop1.3}
b_n^2\sum_{m=1}^n \ep\left[(\bm \zeta_{n,m})^{\rm t}\bm \zeta_{n,m} \big|\mathscr{F}_{m-1} \right]\overset{P}\to \bm\Sigma \;\; (resp. \; \widetilde{\bm \Sigma}).
\end{align}
We first verify (\ref{eqproofprop1.3}).  Write
$$ \bm S_n =\sum_{m=1}^n \ep\left[(\Delta \bm M_m)^{\rm t} \Delta \bm M_m\big|\mathscr{F}_{m-1}\right]-n \bm \Gamma . $$
Then,
\begin{align}\label{eqproofprop1.4}
&\sum_{m=1}^n \ep\left[(\bm \zeta_{n,m})^{\rm t}\bm \zeta_{n,m} \big|\mathscr{F}_{m-1} \right]
-\sum_{m=1}^n  (\widetilde{\bm \Pi}_m^n)^{\rm t} \frac{\bm \Gamma}{m^2} \widetilde{\bm \Pi}_m^n\nonumber\\
=&  \sum_{m=1}^n (\widetilde{\bm \Pi}_m^n)^{\rm t} \frac{\bm S_m-\bm S_{m-1} }{m^2} \widetilde{\bm \Pi}_m^n\nonumber\\
=  &  \frac{\bm S_n}{n^2}+ \sum_{m=1}^{n-1} \left[(\widetilde{\bm \Pi}_m^n)^{\rm t} \frac{\bm S_m }{m^2} \widetilde{\bm \Pi}_m^n- (\widetilde{\bm \Pi}_{m+1}^n)^{\rm t} \frac{\bm S_m }{(m+1)^2} \widetilde{\bm \Pi}_{m+1}^n\right]\nonumber\\
=&    \frac{\bm S_n}{n^2}+ \sum_{m=1}^{n-1} \left[  (\widetilde{\bm \Pi}_{m+1}^n)^{\rm t} \bm S_m \big(\frac{1}{m^2}-\frac{1 }{(m+1)^2}\big) \widetilde{\bm \Pi}_{m+1}^n\right] \nonumber\\
&+ \sum_{m=1}^{n-1}  (\widetilde{\bm \Pi}_m^n)^{\rm t} \frac{\bm S_m }{m^2}\left[ \widetilde{\bm \Pi}_m^n-\widetilde{\bm \Pi}_{m+1}^n\right]
 + \sum_{m=1}^{n-1}  \left[ \widetilde{\bm \Pi}_m^n-\widetilde{\bm \Pi}_{m+1}^n\right]^{\rm t} \frac{\bm S_m }{m^2} \widetilde{\bm \Pi}_{m+1}^n.
\end{align}
  Note by Proposition \ref{lemma1}(i) that
 $\| \widetilde{\bm \Pi}_m^n\|\le C\big(\frac{n}{m}\big)^{-\rho}(\log n/m)^{\nu-1}$   and $\|\widetilde{\bm \Pi}_m^n-\widetilde{\bm \Pi}_{m+1}^n\|=\|\frac{\bm H}{m+1} \widetilde{\bm\Pi}_{m+1}^n\|\le C\frac{1}{m}\big(\frac{n}{m}\big)^{-\rho}(\log n/m)^{\nu-1}$.  Each of the  terms in (\ref{eqproofprop1.4}) does not exceed
\begin{align*}
&\frac{\|\bm S_n\|}{n^2}+C  \sum_{m=1}^n  \frac{\|\bm S_m\|}{m^2}\frac{1}{m} \big(\frac{n}{m}\big)^{-2\rho}(\log \frac{n}{m})^{2(\nu-1)}\\
\overset{L_1}=&o(n^{-1})+  \sum_{m=1}^n  \frac{o(m)}{m^2}\frac{1}{m}\big(\frac{n}{m}\big)^{-2\rho}(\log \frac{n}{m})^{2(\nu-1)}=o(b_n^{-2}).
\end{align*}
In contrast, by noting  Proposition \ref{lemma1}  (iv),
it follows that
\begin{align*}
 &b_n^2\sum_{m=1}^n (\widetilde{\bm \Pi}_m^n)^{\rm t} \frac{\bm \Gamma}{m^2} \widetilde{\bm \Pi}_m^n  \\
=& b_n^2\sum_{m=1}^{n-1}\int_m^{m+1}\left(\frac{n}{y}\right)^{-\bm H^{\rm t}} \frac{\bm \Gamma}{y^2} \left(\frac{n}{y}\right)^{-\bm H }dy \\ &+b_n^2\sum_{m=1}^{n-1} o(1)\frac{1}{m^2}\big(\frac{n}{m}\big)^{-2\rho}\big(\log \frac{n}{m}\big)^{2(\nu-1)}\\
=& b_n^2 \int_1^n\left(\frac{n}{y}\right)^{-\bm H^{\rm t}} \frac{\bm \Gamma}{y^2} \left(\frac{n}{y}\right)^{-\bm H }dy +o(1)\\
=& \frac{b_n^2}{n} \int_0^{\log n} e^{-\bm H^{\rm t}u}\bm \Gamma e^{-\bm H u}e^u du+o(1)
= \bm \Sigma +o(1)\;\; \big(resp. \widetilde{\bm \Sigma} +o(1)\big).
\end{align*}
Thus, (\ref{eqproofprop1.3}) is proved.

To verify (\ref{eqproofprop1.2}), we first note that  (\ref{eqAssump2.2}) is equivalent to
\begin{equation} \label{eqproofprop1.5} \frac{1}{n}\sum_{m=1}^n\ep\left[\|\Delta \bm M_m\|^2\mathbb I\{\|\Delta \bm M_m\|\ge \epsilon \sqrt{m}\}\big|\mathscr{F}_{m-1}\right]\to 0\;\; a.s.\; \text{ or in } L_1\;\; \forall \epsilon>0.
\end{equation}
In contrast,
$$ b_n\|\bm\zeta_{n,m}\|\le C\frac{b_n}{\sqrt{n}} \frac{\|\Delta\bm M_m\|}{\sqrt{m}}\big(\frac{n}{m}\big)^{1/2-\rho}\big(\log\frac{n}{m}\big)^{\nu-1}
\le  C\frac{\|\Delta\bm M_m\|}{\sqrt{m}}.
$$
  Let  $S_n=\sum_{m=1}^n\ep\left[\|\Delta \bm M_m\|^2\mathbb I\{\|\Delta \bm M_m\|\ge \epsilon \sqrt{m}/C\}\big|\mathscr{F}_{m-1}\right]$ and\\
  $d_m=\frac{1}{\sqrt{m}}\big(\frac{n}{m}\big)^{1/2-\rho}\big(\log\frac{n}{m}\big)^{(\nu-1)}$. It follows that
\begin{align*}
&b_n^2\sum_{m=1}^n \ep\left[ \|\bm \zeta_{n,m}\|^2\mathbb I\{b_n\|\bm \zeta_{n,m}\|\ge \epsilon\}\big|\mathscr{F}_{m-1}\right]\\
\le & C\frac{b_n^2}{n}\sum_{m=1}^n d_m^2\ep\left[\|\Delta \bm M_m\|^2\mathbb I\{\|\Delta \bm M_m\|\ge \epsilon \sqrt{m}/C\}\big|\mathscr{F}_{m-1}\right]\\
=&C\frac{b_n^2}{n}\Big( \frac{S_n}{n}+\sum_{m=1}^{n-1}S_m(d_m^2-d_{m+1}^2)\Big)\le C\frac{b_n^2}{n}\Big( \frac{|S_n|}{n}+\sum_{m=1}^{n-1}\frac{|S_m|}{m}d_m^2 \Big)\\
= &  C\frac{b_n^2}{n}\Big( o(1)+\sum_{m=1}^{n-1}o(1)d_m^2 \Big)=o(1).
\end{align*}
Thus, (\ref{eqproofprop1.2}) is verified.

Finally, we verify (\ref{Alimitvaraince}) in the case of $\rho=1/2$. Note that
\begin{align*} &   \bm T^{-1} e^{-( \bm H-\bm I_d/2)u}\bm T=e^{-(\bm J-\bm I_d/2)u} \\
=& diag\big(e^{(1/2-\lambda_1)u}e^{-\overline{\bm J_1}u}, \cdots, e^{(1/2-\lambda_s)u}e^{-\overline{\bm J_s}u}\big),
\end{align*}
$$ {\bm T^{\star}}^{\rm t}\widetilde{\bm \Sigma}\bm T=\lim_{n\to \infty} \frac{1}{(\log n)^{2\nu-1}}\int_0^{\log n}{\big[e^{-(\bm J-\bm I_d/2)u}\big]^{\star}}^{\rm t}{\bm T^{\star}}^{\rm t}\bm\Gamma\bm T \big[e^{-(\bm J-\bm I_d/2)u}\big]du $$
and
\begin{align*} e^{-\overline{\bm J_a}u}=& \sum_{j=0}^{\nu_a-1}u^j \frac{(-\overline{\bm J_a})^j}{j!}=O(1)u^{\nu_a-2}+\frac{(-u)^{\nu_a-1}}{(\nu_a-1)!}(\overline{\bm J_a})^{\nu_a-1} \\
=& O(1)u^{\nu_a-2}+\frac{(-u)^{\nu_a-1}}{(\nu_a-1)!} (\underset{\nu_a}{\underbrace{1,0,\cdots, 0}})^{\rm t}(\underset{\nu_a}{\underbrace{0,\cdots,0, 1}})
\end{align*}
as $u\to +\infty$. Hence,
$$ e^{-(\bm J-\bm I_d/2)u}=O(1)u^{\nu-2}+(-1)^{\nu-1}\sum_{a:Re(\lambda_a)=\rho, \nu_a=\nu}e^{(1/2-\lambda_a)u}\frac{u^{\nu-1}}{(\nu-1)!} \bm f_a^{\rm t}\bm e_a   $$
as $u\to +\infty$, where  $\bm e_a=(\bm 0, \cdots,\bm 0,0,\cdots, 0,1,\bm 0,\cdots, \bm 0)$  is the vector such that   the $\nu_a$-th element of its block $a$  is $1$ and other elements are zero, and $\bm f_a=(\bm 0, \cdots,\bm 0,1,0,\cdots, 0,\bm 0,\cdots, \bm 0)$  is the vector such that   the first element of its block $a$  is $1$ and other elements are zero.
It is easily seen that
\begin{align*}
& \int_0^{\log n} \big(e^{(1/2-\lambda_a)u}u^j\big)^{\star}\big(e^{(1/2-\lambda_b)u}u^l\big)du=\int_0^{\log n} e^{(1-\lambda_a^{\star}-\lambda_b)u}u^{j+l} du \\
&\quad = \begin{cases} O(1), & \text{ if } Re(\lambda_a)+Re(\lambda_b)>1,\\
O((\log n)^{j+l}), & \text{ if } Re(\lambda_a)=Re(\lambda_b)=1/2, \lambda_a\ne \lambda_b,\\
\frac{(\log n)^{j+l+1}}{j+l+1}, & \text{ if } Re(\lambda_a)=Re(\lambda_b)=1/2, \lambda_a= \lambda_b.
\end{cases}
\end{align*}
It follows that
\begin{align*}
&\int_0^{\log n}{\big[e^{-(\bm J-\bm I_d/2)u}\big]^{\star}}^{\rm t}{\bm T^{\star}}^{\rm t}\bm\Gamma\bm T \big[e^{-(\bm J-\bm I_d/2)u}\big]du\\
=&\frac{1}{((\nu-1)!)^2}\frac{(\log n)^{2\nu-1}}{2\nu-1}\sum  (\bm f_a^{\rm t}\bm e_a)^{\rm t}{\bm T^{\star}}^{\rm t}\bm\Gamma\bm T (\bm f_b^{\rm t}\bm e_b)+O\big((\log n)^{2\nu -2}\big)\\
=&\frac{1}{((\nu-1)!)^2}\frac{(\log n)^{2\nu-1}}{2\nu-1}\sum (\bm f_a^{\rm t}{\bm T^{\star}}^{\rm t}\bm\Gamma\bm T \bm f_b^{\rm t})(\bm e_a^{\rm t}\bm e_b)+O\big((\log n)^{2\nu -2}\big)\\
=&\frac{1}{((\nu-1)!)^2}\frac{(\log n)^{2\nu-1}}{2\nu-1}\sum \bm t_{a1}^{\star}\bm\Gamma\bm t_{b1}^{\rm t} (\bm e_a^{\rm t}\bm e_b)+O\big((\log n)^{2\nu -2}\big),
\end{align*}
where the summation is taken over all $a,b$ with  $Re(\lambda_a)=1/2$, $\lambda_a=\lambda_b$ and $\nu_a=\nu_b=\nu$. So (\ref{Alimitvaraince}) is verified. The proof of (ii) is now complete.
\end{proof}

 \section{Some examples}\label{sectionexample}
\setcounter{equation}{0}
In this section, we give several examples  for the cases $\rho= 1/2$ and $\rho<1/2$.
The first example tells us that the elements of $\bm \theta_n$ may have different  convergence rates  if $\lambda_{\min}$ is a multiple eigenvalue and the order of a corresponding Jordan block of $D\bm h(\bm\theta^{\ast})$ exceeds one. The second shows that when $\lambda_{\min}$ is a complex eigenvalue it is possible that there is no $a_n$ for which $a_n(\bm \theta_n-\bm \theta^{\ast})$ has no zero limit. The third and  the last  show that the condition (\ref{eqAssump1.1}) cannot be weakened to (\ref{eqAssump0.1}) in Theorems \ref{theorem2} and \ref{theorem3}, and the convergence rates in conditions (\ref{conditionTh2.1}) or (\ref{conditionTh2.2}) cannot be weakened in Theorem \ref{theorem2}.

\begin{example}
Let $\theta_{0,1}=\theta_{0,2}=0$ and
\begin{align*}
\theta_{n+1,1}=& \theta_{n,1}-\frac{\lambda}{ n+1}\theta_{n,1}+\frac{\epsilon_{n+1}}{n+1},\\
\theta_{n+1,2}=& \theta_{n,2}-\frac{1}{(n+1)}(-\theta_{n,1}+\lambda\theta_{n,2}),
\end{align*}
where $\epsilon_n$ are i.i.d. standard normal random variables, $0<\lambda<1$.
That is
$$ \big(\theta_{n+1,1},\theta_{n+1,2}\big)=\big(\theta_{n,1},\theta_{n,2}\big)-
\frac{1}{n+1}\big(\theta_{n,1},\theta_{n,2}\big)\begin{pmatrix}  \lambda & -1 \\ 0 & \lambda\end{pmatrix} +\frac{1}{n+1}(\epsilon_{n+1},0).
$$
It is obvious that 
$$\bm h(\bm \theta)=\bm \theta \begin{pmatrix}  \lambda & -1 \\ 0 & \lambda\end{pmatrix}, $$
$\bm r_n=\bm 0$, $\Delta\bm M_n=(\epsilon_n,0)$ and $\rho=\lambda$.

However, if $\lambda=1/2$, then  $(\theta_{n,1},\theta_{n,2})\to (0,0)$ a.s.,
\begin{equation}\label{eqexample1} \sqrt{\frac{n}{\log n}} \theta_{n,1}\overset{D}\to N(0,1)\;\; \text{ and }\;\; \sqrt{\frac{3n}{(\log n)^2}} \theta_{n,2}\overset{D}\to N(0,1);
\end{equation}
and if $0<\lambda<1/2$, then there is a normal random variable $\xi\not\equiv 0$ such that
\begin{equation}\label{eqexample1.2} n^{\lambda} \theta_{n,1}\to \xi\; a.s. \; \text{ and }\;\;  \frac{n^{\lambda}}{\log n} \theta_{n,2}\to \xi\; a.s.
\end{equation}
\end{example}

\begin{proof} Let
$$ \Pi_k^n=\prod_{j=k+1}^n \Big(1-\frac{\lambda}{j}\Big), \;1 \le k\le n-1, \;\; \Pi_n^n=1.  $$
Then
$$ \theta_{n,1}=\left(1-\frac{\lambda}{n}\right)\theta_{n-1,1}+\frac{\epsilon_n}{n}=\cdots=\sum_{k=1}^n \Pi_k^n \frac{\epsilon_k}{k} $$
and
\begin{align*}
 \theta_{n,2}=&\sum_{k=1}^n \Pi_k^n \frac{\theta_{k-1,1}}{k}
 =  \sum_{k=1}^n \Pi_k^n \frac{1}{k}\sum_{j=1}^{k-1} \Pi_j^k \frac{\epsilon_j}{j}
 =  \sum_{j=1}^{n-1} \frac{\epsilon_j}{j} \Pi_j^n\sum_{k=j+1}^n\frac{1}{k-\lambda}
\end{align*}
are normal random variables with mean zeros.
Note that
$$ \sum_{k=j}^n\frac{1}{k}= \log\frac{n}{j}+o(1),\;\; \Pi_j^n\sim \big(\frac{j}{n}\big)^{\lambda}\;\; \text{ as } n\ge j\to \infty $$
and
$$ \sum_{k=j}^n\frac{1}{k}= \log\frac{n}{j}+O(1),\;\; \Pi_j^n\approx \big(\frac{j}{n}\big)^{\lambda}\;\; \text{ for all } n\ge j.$$
Suppose $\lambda=1/2$. Then
\begin{align*}
 Var(\theta_{n,1})=&\sum_{k=1}^n \frac{1}{k^2}(\Pi_k^n)^2 \sim \frac{1}{n} \sum_{k=1}^n \frac{1}{k}\sim \frac{\log n}{n},\\
 Var(\theta_{n,2})=&\sum_{j=1}^{n-1} \frac{1}{j^2}(\Pi_j^n\sum_{k=j+1}^n\frac{1}{k-\lambda})^2
\sim   \frac{1}{n} \sum_{j=1}^n \frac{1}{j} (\log \frac{n}{j})^2\\
\sim &\frac{1}{n} \int_1^n \frac{1}{x}(\log \frac{n}{x})^2dx =\frac{(\log n)^3}{3n}.
\end{align*}
(\ref{eqexample1}) is proved. Further, it is easily seen that
$$ \sum_n \pr(|\theta_{n,k}|\ge \epsilon)\le \frac{2}{\epsilon\sqrt{2\pi}}\sum_n \sqrt{Var(\theta_{n,k})}\exp\{ -\frac{\epsilon^2}{2 Var(\theta_{n,k})}\}<\infty, $$
since $\theta_{n,k}$ is a normal random variable with mean zero. So $\theta_{n,k}\to 0$ a.s. by the Borel-Cantelli lemma, $k=1,2$.

Now, suppose $0<\lambda<1/2$. Then
$$  \sum_{k=1}^{\infty} (\Pi_0^k)^{-1} \frac{\epsilon_k}{k} \;\; \text{ is a.s. convergent} $$
because
$$ \sum_{k=1}^{\infty} \Var\{(\Pi_0^k)^{-1} \frac{\epsilon_k}{k}\}\le C  \sum_{k=1}^{\infty}\frac{1}{k^{2-2\lambda}}<\infty. $$
Similarly,
$$ \sum_{k=1}^{\infty} (\Pi_0^j)^{-1} \frac{\epsilon_j}{j}\sum_{k=1}^{j-1}\frac{1}{k} \;\; \text{ is a.s. convergent}. $$
It follows that
$$\big(\Pi_0^n\big)^{-1}\theta_{n,1}=\sum_{k=1}^n \big(\Pi_0^k\big)^{-1} \frac{\epsilon_k}{k}\to \sum_{k=1}^{\infty} (\Pi_0^k)^{-1} \frac{\epsilon_k}{k}\;\; a.s.  $$
and
\begin{align*}\frac{1}{\log n} \big(\Pi_0^n\big)^{-1}\theta_{n,2}=& \frac{\sum_{k=1}^{n}\frac{1}{k-\lambda}}{\log n}\sum_{j=1}^{n-1} \frac{\epsilon_j}{j} \big(\Pi_0^j\big)^{-1}
-\frac{1}{\log n}\sum_{j=1}^{n-1} \frac{\epsilon_j}{j} \big(\Pi_0^j\big)^{-1}\sum_{k=1}^{j}\frac{1}{k-\lambda} \\
\to & \sum_{j=1}^{\infty} \frac{\epsilon_j}{j} \big(\Pi_0^j\big)^{-1}\;\; a.s.
\end{align*}
Note that $n^{\lambda}\Pi_0^n\to c_0$ and $\sum_{k=1}^{\infty} (\Pi_0^k)^{-1} \frac{\epsilon_k}{k}$ is a   normal random variable. (\ref{eqexample1.2}) is proved.
\end{proof}

\bigskip
The next gives an example   that there is no $a_n$ for which $a_n(\bm \theta_n-\bm \theta^{\ast})$ has no zero limit when $\rho<1/2$ and $\lambda_{\min}$ is a complex number.
\begin{example} Write $\bm \theta_n=(\theta_{n,1},\theta_{n,2})$, $\bm\epsilon_n=(\epsilon_{n,1},\epsilon_{n,2})$. Suppose that $\bm\theta_0=(0,0)$,
$$\bm \theta_{n+1}=\bm \theta_n -\frac{\lambda}{n+1}\bm \theta_n \begin{pmatrix}  1 & -1 \\ 1 & 1\end{pmatrix} +\frac{1}{n+1}\bm \epsilon_{n+1}, $$
where $\epsilon_{n,1}, \epsilon_{n,2}, n\ge 1$ are i.i.d. standard normal random variables, $\lambda>0$.

It is easily seen that $\bm r_n=\bm 0$, $\Delta\bm M_n=\bm\epsilon_n$,
$$ \bm h(\bm \theta)=\bm \theta \lambda \begin{pmatrix}  1 & -1 \\ 1 & 1\end{pmatrix}, \;\;
D\bm h(\bm\theta)=\lambda \begin{pmatrix}  1 & -1 \\ 1 & 1\end{pmatrix}. $$
The eigenvalues of $D\bm h(\theta)$ are $\lambda(1\pm i)$, and $\rho=\lambda$. However, when $\lambda<1/2$, 
  $\bm \theta_n\to \bm 0$ a.s., and there is no $a_n$ for which $a_n\bm \theta_n$ has no zero limit.
\end{example}
\begin{proof}  Let
$$ \bm T=\frac{1}{\sqrt{2}}\begin{pmatrix} 1 & i \\ 1 & -i \end{pmatrix}, $$
$ \bm y_n=\bm\theta_n\bm T^{-1}$, $ \bm\delta_n=\bm\epsilon_n\bm T^{-1}$. Then
$$ \bm T^{-1}= \frac{1}{\sqrt{2}}\begin{pmatrix} 1 & 1 \\ -i & i \end{pmatrix},\;\;
 \begin{pmatrix}  1 & -1 \\ 1 & 1\end{pmatrix}=\bm T^{-1}\begin{pmatrix}  1+i &  0 \\ 0 & 1-i\end{pmatrix} \bm T, $$

$$   y_{n+1,1}=  y_{n,1}-\frac{\lambda(1+i)}{n+1} y_{n,1}+
\frac{\delta_{n,1}}{n+1},\;\; \overline{y_{n,1}}=y_{n,2}. $$
Write
$$ \Pi_k^n=\prod_{j=k+1}^n \Big(1-\frac{\lambda(1+i)}{j}\Big). $$
Then $|\Pi_0^n|\approx n^{-\lambda}$, $n^{\lambda(1+i)} \Pi_0^n\to c_0\ne 0$,
$$ y_{n,1}=\sum_{k=1}^n \Pi_k^n \frac{\delta_{k,1}}{k}. $$
So
$$(\Pi_0^n)^{-1}y_{n,1}=\sum_{k=1}^n  \frac{\delta_{k,1}}{k\Pi_0^k}\to \sum_{k=1}^{\infty}  \frac{\delta_{k,1}}{k\Pi_0^k} \;a.s., $$
where the finiteness of the summation is due the fact that
$$ \sum_{k=1}^{\infty}  \ep\left|\frac{\delta_{k,1}}{k\Pi_0^k}\right|^2\le C\sum_{k=1}^{\infty} \frac{1}{n^{2(1-\lambda)}}<\infty. $$
 It follows that
$$ n^{\lambda(1+i)}y_{n,1}\overset{a.s.}\to \xi=:c_0\sum_{k=1}^{\infty}  \frac{\delta_{k,1}}{k\Pi_0^k}. $$
where $\xi=\xi_1+i\xi_2$ is a complex normal random variable with mean zero and
$$ \ep|\xi|^2=|c_0|\sum_{k=1}^{\infty}\frac{\ep|\delta_{1,1}|^2}{k^2|\Pi_0^k|^2}\ne 0. $$
  It follows that
\begin{align*}
&n^{\lambda}\frac{1}{\sqrt{2}}(\theta_{n,1}-i\theta_{n,2})=
 n^{\lambda} y_{n,1}
 =\xi n^{-\lambda i}+o(1)\\
 =& \left[\xi_1\cos(\lambda\log n)+\xi_2\sin(\lambda\log n)\right]
+i\left[\xi_2\cos(\lambda\log n)-\xi_1\sin(\lambda\log n)\right]+o(1)\;\; a.s.
\end{align*}
Hence
$$ n^{\lambda}\theta_{n,1}=\sqrt{2}\left[\xi_1\cos(\lambda\log n)+\xi_2\sin(\lambda\log n)\right]+o(1)\;\; a.s., $$
$$ n^{\lambda}\theta_{n,2}=\sqrt{2}\left[\xi_1\sin(\lambda\log n)-\xi_2\cos(\lambda\log n)\right]+o(1)\;\; a.s. $$
The proof is complete. 
\end{proof}

\bigskip
The next example  shows  the SA algorithm may have different rates of convergence though  the derivative $D\bm h(\bm\theta^{\ast})$ of the regression  function   $\bm h(\cdot)$ is  the same.

\begin{example} \label{example4} Let $\theta_0=e^{-e^2}$,
$$ \theta_{n+1}=\theta_n- \frac{\rho}{n+1}\int_0^{\theta_n}(1-f(x))dx, $$
where $0<\rho\le 1/2$, and $f$ is a continuous function on $[0,1]$ with $f(0)=0$. Then it is easily seen that
$h(\theta)=\rho\int_0^{\theta}(1-f(x))dx$, $D h(\theta)=\rho(1-f(\theta))$, $D h(0)=\rho$ and $\Delta M_{n+1}=r_{n+1}=0$. However,
\begin{description}
  \item[(i)] if $f(x)\equiv 0$, then
  $$ n^{\rho}\theta_n\to c_0>0; $$
    \item[(ii)] if $f(x)=\frac{1}{\log\log (1/(x\wedge \theta_0))}$, then
    $$n^{\rho-\epsilon}\theta_n\to 0\;\forall \; \epsilon>0\;\;\text{ and }\;\; \frac{n^{\rho}}{(\log n)^q}\theta_n \to +\infty\; \forall \; q>0. $$
\end{description}

\end{example}
\begin{proof} Let
$$ \Pi_k^n=\prod_{j=k+1}^n \left(1-\frac{\rho}{j}\right). $$
Then $\Pi_0^n\sim n^{-\rho} c_{\rho}$.  If $f(x)\equiv 0$, then
$$\theta_n =\Pi_0^n \theta_0\sim n^{-\rho}c_{\rho}\theta_0. $$
(i) is proved. Now suppose $f(x)=\frac{1}{\log\log (1/(x\wedge \theta_0))}$. Note $f(x)\ge 0$. We have
$$ \theta_n\ge \left(1-\frac{\rho}{n}\right)\theta_{n-1}\ge \cdots \ge \Pi_0^n \theta_0\ge c n^{-\rho}. $$
On the other hand,
$$\theta_n\le \left(1-\frac{\rho}{n}\frac{1}{2}\right)\theta_{n-1}\le\cdots\le \prod_{j=1}^n
\left(1-\frac{\rho}{2n}\right)\theta_0\le C n^{-\rho/2}, $$
because $0\le f(x)\le 1/2$. It follows that
$$ f(\theta_n)=\frac{1}{\log\log n}+\frac{O(1)}{(\log\log n)^2\log n}. $$
Write $\delta_{n+1} =\int_0^{\theta_n}f(x)dx/\theta_n$. Then $\theta_{n+1}=\left(1-\frac{\rho(1-\delta_{n+1})}{n+1}\right)\theta_n$ and
$$ \delta_{n+1}-f(\theta_n)\sim -\frac{1}{2}\frac{1}{(\log\log 1/\theta_n)^2\log 1/\theta_n}.$$
It follows that
$$\delta_{n+1}=\frac{1}{\log\log (n+1)}+\frac{O(1)}{(\log\log (n+1))^2\log (n+1)}. $$
Hence
\begin{align*}
 \theta_n=&\theta_0\prod_{j=1}^n\left(1-\frac{\rho(1-\delta_j)}{j}\right)\\
 =&\theta_0\exp\left\{-\sum_{j=1}^n\frac{\rho}{j}+\sum_{j=1}^n\frac{\rho}{j\log\log j}+c_n
 +\sum_{j=1}^n \frac{O(1)}{j(\log\log j)^2\log j} \right\} \\
 =& \theta_0\exp\left\{-\rho\log n+\frac{\rho\log n}{\log\log n}+C_n\right\},
 \end{align*}
where $c_n$ and $C_n$ are convergent sequences of real numbers. It follows that
$$ n^{\rho}\exp\left\{-\frac{\rho\log n}{\log\log n}\right\}\theta_n\to c>0. $$
 (ii) is proved. \end{proof}

\bigskip
The last example shows that  when $\rho=1/2$, the rate of convergence (\ref{eqLaruellePages1.2}) of the remainder $\bm r_n$ is not sufficient for investigating the asymptotic normality and  conditions (\ref{conditionTh2.1}) or (\ref{conditionTh2.2}) cannot be weakened.
\begin{example} \label{example5}
Let $\theta_{0}=0$ and
\begin{align*}
\theta_{n+1}=& \theta_{n}-\frac{1}{2(n+1)}\theta_{n}+\frac{\epsilon_{n+1}+r_{n+1}}{n+1},
\end{align*}
where $\epsilon_n$ are i.i.d. standard normal random variables, $r_n$ are real numbers.
Then
\begin{equation}\label{eqexample3.1}
 \theta_n-\ep\theta_n \to 0\;a.s.,\;\; \sqrt{\frac{n}{\log n}}(\theta_n-\ep \theta_n)\overset{D}\to N(0,1).
\end{equation}
However, if $r_n\equiv0$, then
$$  \ep \theta_n\equiv 0 \;\;\text{ and so }\;\;  \theta_n\to 0\; a.s.,\;\; \sqrt{\frac{n}{\log n}} \theta_n \overset{\mathscr{D}}\to N(0,1);$$
if
$r_{n}=\frac{1}{\sqrt{n}\log\log n}$, then
\begin{equation}\label{eqexample3.2}
  \ep \theta_n\sim \frac{\log n}{\sqrt{n}\log \log n}\;\; \text{ and so }\;\; \theta_n\to 0\; a.s.,\;\; \sqrt{\frac{n}{\log n}} \theta_n \overset{P}\to +\infty;
\end{equation}
if
$r_{n}=\frac{1}{\sqrt{n}\sqrt{\log n}}$, then
\begin{equation}\label{eqexample3.3}
  \ep \theta_n\sim 2\frac{\sqrt{\log n}}{\sqrt{n}}\;\; \text{ and so }\;\; \theta_n\to 0\; a.s.,\;\; \sqrt{\frac{n}{\log n}} \theta_n \overset{\mathscr{D}}\to N(2,1).
\end{equation}
\end{example}
Obviously, in each case $r_n$ satisfies (\ref{eqLaruellePages1.2}).

\begin{proof} Let
$$ \Pi_k^n=\prod_{j=k+1}^n \Big(1-\frac{1}{2j}\Big), \;\; \Pi_n^n=1.  $$
Then
$$\theta_n-\ep\theta_n=\sum_{k=1}^n\Pi_k^n \frac{\epsilon_k}{k},\;\;
\ep\theta_n=\sum_{k=1}^n\Pi_k^n \frac{r_k}{k}. $$
It follows that
$$ Var(\theta_n)=\sum_{k=1}^n(\Pi_k^n)^2 \frac{1}{k^2}\sim \sum_{k=1}^n \frac{k}{n}\frac{1}{k^2}\sim \frac{\log n}{n}.
$$
By noting that $\theta_n$ is a normal random variable, (\ref{eqexample3.1}) is proved.
It is obvious that $\ep \theta_n\equiv 0$ if $r_n\equiv 0$. If  $r_{n}=\frac{1}{\sqrt{n}\log\log n}$, then
$$ \ep \theta_n\sim \sum_{k=1}^n \big(\frac{k}{n}\big)^{1/2}\frac{1}{\sqrt{k}k\log\log k}
=\frac{1}{\sqrt{n}}\sum_{k=1}^n \frac{1}{k\log\log k}\sim \frac{\log n}{\sqrt{n}\log\log n}. $$
If  $r_{n}=\frac{1}{\sqrt{n}\sqrt{\log n}}$, then
$$ \ep \theta_n\sim \sum_{k=1}^n \big(\frac{k}{n}\big)^{1/2}\frac{1}{\sqrt{k}k\sqrt{\log k}}
=\frac{1}{\sqrt{n}}\sum_{k=1}^n \frac{1}{k\sqrt{\log k}}\sim 2\frac{\sqrt{\log n}}{\sqrt{n}}. $$
 (\ref{eqexample3.2}) is proved.
 \end{proof}


 \end{document}